\newtheorem{theorem}{Theorem}[section]
\newtheorem{proposition}[theorem]{Proposition}
\newtheorem{lemma}[theorem]{Lemma}
\newtheorem{corollary}[theorem]{Corollary}
\newtheorem{definition}[theorem]{Definition}
\newtheorem{notation}[theorem]{Notation}
\newtheorem{example}[theorem]{Example}
\newtheorem{remark}[theorem]{Remark}
\newcommand{\skipit}[1]{{}}
\newcommand{\prfend}{\hbox to7pt{\hfil}
\par\vskip-\baselineskip\hbox to\hsize
{\hfil\vbox {\hrule width6pt height6pt}}\vskip\baselineskip}
\newcommand{\ZZ}{\mathbb{Z}}
\newcommand{\N}{\mathbb{N}}
\newcommand {\PP}{\mathbb{P}}
\newcommand{\cM}{\mathcal{M}}
\newcommand{\cB}{\mathcal{B}}
\newcommand{\W}{\mathcal{W}_{d}}
\newcommand{\cX}{\mathcal{X}}
\newcommand{\cN}{\mathcal{N}}
\newcommand{\kk}{\mathbb{K}}
\newcommand{\af}{\mathbb{A}}
\newcommand{\Wk}{\mathcal{W}_{d,k}}
\DeclareMathOperator{\reg}{reg}
\DeclareMathOperator{\GL}{GL}
\DeclareMathOperator{\diag}{diag}
\DeclareMathOperator{\GCD}{GCD}
\DeclareMathOperator{\relint}{relint}
\DeclareMathOperator{\I}{I}
\DeclareMathOperator{\syz}{syz}
\DeclareMathOperator{\HS}{HS}
\DeclareMathOperator{\Comh}{H}
\DeclareMathOperator{\comh}{h}
\DeclareMathOperator{\supp}{supp}
\newcommand{\myarrow}[2]{\hbox to #1pt{\hfil$\to$\hfil}{\hskip-#1pt{\raise
10pt\hbox to#1pt{\hfil$\scriptscriptstyle #2$\hfil}}}}
\begin{document}

\title{The canonical module of GT--varieties and the normal bundle of RL--varieties.}

\author[Liena Colarte-Gomez]{Liena Colarte--G\'{o}mez}
\address{Departament de matem\`{a}tiques i Inform\`{a}tica, Universitat de Barcelona, Gran Via de les Corts Catalanes 585, 08007 Barcelona,
Spain}
\email{liena.colarte@ub.edu}

\author[Rosa M. Mir\'o-Roig]{Rosa M. Mir\'o--Roig}
\address{Departament de matem\`{a}tiques i Inform\`{a}tica, Universitat de Barcelona, Gran Via de les Corts Catalanes 585, 08007 Barcelona,
Spain}
\email{miro@ub.edu}

\begin{abstract} In this paper, we study the geometry of $GT-$varieties $X_{d}$ with group a finite cyclic group $\Gamma \subset \GL(n+1,\kk)$ of order $d$.  We prove that the homogeneous ideal $\I(X_{d})$ of $X_{d}$ is generated by binomials of degree at most $3$ and we provide examples reaching this bound. We give a combinatorial description of the canonical module of the homogeneous coordinate ring of $X_{d}$ and we show that it is generated by monomial invariants of $\Gamma$ of degree $d$ and $2d$. This allows us to characterize the Castelnuovo-Mumford regularity of the homogeneous coordinate ring of $X_d$. Finally, we compute the cohomology table of the normal bundle of the so called $RL-$varieties. They are projections of the Veronese variety $\nu_{d}(\PP^{n}) \subset \PP^{\binom{n+d}{n}-1}$ which  naturally arise from level $GT-$varieties.
\end{abstract}

\thanks{The   authors are partially   supported
by  MTM2016--78623-P}
\keywords{Projections of Veronese varieties, GT-varieties, binomial ideal, canonical module, normal bundle. \\ \textit{2020 Mathematics Subject Classification} 14M05, 14L30, 14J60 (primary), 13A50, 13C14, 13E15 (secondary)}

\maketitle

\tableofcontents

\markboth{L. Colarte, R. M. Mir\'o-Roig}{}

\large

\section{Introduction}
Through this paper, $\kk$ denotes an algebraically  closed field of characteristic zero, $R = \kk[x_0,\hdots, x_n]$ and $\GL(n+1,\kk)$ denotes the group of invertible matrices of size $(n+1) \times (n+1)$ with coefficients in $\kk$. 

In \cite{MM-RO}, Mezzetti, Mir\'o-Roig and Ottaviani related the existence of homogeneous artinian ideals $I \subset R$ generated by homogeneous forms $F_{1},\hdots, F_{r}$ of degree $d$ failing the weak Lefschtez property in degree $d-1$ to the existence of rational projective varieties of $\PP^{\binom{n+d}{n}-r-1}$ satisfying a Laplace equation. They called $I$ a {\em Togliatti system}. Since then Togliatti systems have been extensively studied as one can see in  
\cite{AMRV}, \cite{CMM-R}, \cite{CM-R}, \cite{MM-R1}, \cite{MM-R}, \cite{MkM-R} and \cite{M-RS}.

Any Togliatti system $I$ induces a morphism $\varphi_I: \PP^{n} \to \PP^{r-1}$ defined as $(F_1,\hdots,F_r)$, its image $\varphi_I(\PP^{n})$ is called the {\em variety parameterized by $I$}. In \cite{MM-R}, the authors introduced a new family of Togliatti systems parameterizing varieties with a special geometric property. They called {\em $GT-$system} with cyclic group $\ZZ/d\ZZ$ any Togliatti system  parameterizing a Galois  covering  with group $\ZZ/d\ZZ$. $GT-$systems and the varieties parameterized by them have been subsequently studied in \cite{CMM-RS}, \cite{CM-R} and \cite{CMM-R}, in the latter reference the authors applied invariant theory methods to tackle them. 

To be more precise, fix integers $2 \leq n < d$ and $e$ a $d$th primitive root of $1 \in \kk$. We denote by $M_{d;\alpha_{0},\hdots, \alpha_{n}}$ the diagonal matrix $\diag(e^{\alpha_{0}},\hdots, e^{\alpha_{n}})$, where $0 \leq \alpha_0 \leq \cdots \leq \alpha_n < d$ are integers such that $\GCD(\alpha_0,\hdots,\alpha_n,d) = 1$ and $\alpha_i < \alpha_j$ for some $i \neq j$. We set $\Gamma := \langle M_{d;\alpha_{0},\hdots, \alpha_{n}} \rangle \subset \GL(n+1,\kk)$ a finite cyclic group of order $d$ and $R^{\Gamma} = \{p \in R \,\mid\, g(f) = f, \; \forall g \in \Gamma\}$ the ring of invariants of $\Gamma$. The cyclic extension $\overline{\Gamma} \subset \GL(n+1,\kk)$ of $\Gamma$ is the finite abelian group of order $d^{2}$ generated by $M_{d;\alpha_{0},\hdots, \alpha_{n}}$ and $M_{d;1,\hdots,1} = \diag(e,\hdots,e)$. The ring of invariants of $\overline{\Gamma}$ is $R^{\overline{\Gamma}} = \{p \in R^{\Gamma} \,\mid\, \deg(p) = td, \, 0 \leq t\}$, often called the $d$th Veronese subalgebra of $R^{\Gamma}$. In \cite{CMM-R} it is proved that $R^{\overline{\Gamma}} = \kk[m_1,\hdots,m_{\mu_{d}}]$, where $m_1,\hdots,m_{\mu_d}$ are all the monomial invariants of $\Gamma$ of degree $d$, and it is shown that the ideal $I_{d} = (m_1,\hdots,m_{\mu_{d}})$ is a $GT-$system. They called $GT-$variety with group $\Gamma$ the variety $X_{d}:= \varphi_{I_{d}}(\PP^{n})$ parameterized by $I_d$ and they started to investigate its geometry. 
In \cite{Groebner}, Gr\"obner posed the problem of determining whether a monomial projection of the Veronese variety $\nu_{d}(\PP^{n}) \subset \PP^{\binom{n+d}{n}-1}$ parameterized by the set $\mathcal{M}_{n,d} \subset R$ of all monomials of degree $d$ is an arithmetically Cohen Macaulay (shortly aCM) variety. Motivated by this long-standing problem, the authors of \cite{CMM-R} proved that any $GT-$variety $X_{d}$ is an aCM variety showing that the homogeneous coordinate ring of $X_d$ is isomorphic to $R^{\overline{\Gamma}}$. They also tackled the problem of finding a minimal free resolution of the homogeneous ideal $\I(X_{d})$ of $X_{d}$, which they determined for all $GT-$surfaces. Recently in \cite{CMM-RS}, the notions of $GT-$system and $GT-$variety have been generalized to any finite group acting on $R$, non necessarily cyclic or even abelian. 

In this paper, we address three topics regarding $GT-$varieties. The first two questions concern explicitly the geometry of any $GT-$variety $X_d$, in contrast with the last one, which  deals with the cohomology of the normal bundle of some smooth rational varieties naturally arising from $GT-$varieties. First, we find a set of generators of the homogeneous ideal $\I(X_{d})$ of $X_{d}$.  We prove that $\I(X_{d})$ is generated by homogeneous binomials of degree at most $3$ (Theorem \ref{Theorem: main theorem generators ideal}). We exhibit examples of homogeneous ideals of $GT-$varieties reaching this bound, which also show that it depends on the group $\Gamma$. Second, we determine the algebraic structure of the canonical module $\omega_{X_{d}}$ of the  homogeneous coordinate ring of $X_{d}$. We identify $\omega_{X_{d}}$ with the ideal $\relint(I_{d}) = (x_0^{a_0}\cdots x_n^{a_n} \in R^{\overline{\Gamma}} \,\mid\, 0 \neq a_0\cdots a_n)$ of $R^{\overline{\Gamma}}$ generated by the relative interior of the semigroup ring $\kk[m_1,\hdots,m_{\mu_{d}}]$ (Proposition 4.1). We prove that $\relint(I_d)$ is generated by monomials of degree $d$ and $2d$ (Theorem \ref{Theorem: Canonical Module Gt-variety}). This connection allows us to compute the Castelnuovo-Mumford regularity of $R^{\overline{\Gamma}}$.

Finally, we introduce a new family of smooth rational monomial projections of the Veronese variety $\nu_{d}(\PP^{n}) \subset \PP^{\binom{n+d}{n}-1}$ naturally associated to level $GT-$varieties. A $GT-$variety $X_d$ is called {\em level} if the ideal $\relint(I_d)$ is generated only by monomials of degree $d$ and, hence, $R^{\overline{\Gamma}}$ is a level ring. An {\em $RL-$variety} $\cX_{d}$ associated to a level $GT-$variety $X_d$ is a monomial projection of the Veronese variety $\nu_{d}(\PP^{n}) \subset \PP^{\binom{n+d}{n}-1}$ parameterized by the set of monomials $\mathcal{M}_{n,d} \setminus \{m = x_0^{a_0}\cdots x_n^{a_n} \in \relint(I_{d}) \,\mid\, \deg(m) = d\}$. The name $RL-$variety is conceived to emphasize the relation with the {\bf r}elative interior and the {\bf l}evelness. We give examples of $RL-$varieties in any dimension. Inspired by the recent work of Alzati and Re (\cite{Alzati-Re}), we contribute to the classical open problem of computing the cohomology of the normal bundle of  smooth rational varieties (Theorem \ref{Theorem: dimension cohomology normal}).  Most results and examples of this topic focus on smooth rational curves and surfaces, see for instance \cite{Alzati-Re1}, \cite{Eisenbud-Van de Ven} and \cite{Sacchiero}. We determine the cohomology table of the normal bundle of any $RL-$variety, shading new light on higher dimensions.

\vspace{0.3cm} Let us see how this work is organized. In Section \ref{Section: preliminaries}, we gather the basic definitions and results needed in the body of this paper. Section \ref{Section: homogeneous ideal} is entirely devoted to find a set of homogeneous binomial generators of the homogeneous ideal $\I(X_{d})$ of any $GT-$variety $X_{d}$. We establish that the homogeneous coordinate ring of $X_{d}$ is isomorphic to $R^{\overline{\Gamma}}$ and that $\I(X_{d})$ is a homogeneous prime binomial ideal. Our main result (Theorem \ref{Theorem: main theorem generators ideal}) proves that $\I(X_{d})$ is generated by binomials of degree at most $3$. We give families of examples of $GT-$varieties whose homogeneous ideals are minimally generated by binomials of degree $2$ and $3$. In Section \ref{Section: canonical module}, we investigate the algebraic structure of the canonical module $\omega_{X_{d}}$ of the homogeneous coordinate ring of $X_{d}$. We identify $\omega_{X_{d}}$ with the ideal $\relint(I_{d})$ of $R^{\overline{\Gamma}}$, which gives us a combinatorial description of $\omega_{X_{d}}$. In Theorem \ref{Theorem: Canonical Module Gt-variety} we show that $\relint(I_{d})$ is generated by monomials of degree $d$ and $2d$. We further study $GT-$varieties $X_{d}$ where  $R^{\overline{\Gamma}}$ is a level ring, i.e. the canonical module $\relint(I_d)$ of $R^{\overline{\Gamma}}$ is generated in only one degree. In particular, we study varieties whose homogeneous coordinate ring $R^{\overline{\Gamma}}$ is level and $\relint(I_{d})$ is minimally generated in degree $d$. Afterwards in Theorem \ref{Theorem: regularity}, we characterize the Castelnuovo-Mumford regularity of $R^{\overline{\Gamma}}$. 

Finally in Section \ref{Section: Cohomology of normal bundles of RL-varieties}, we introduce the notions of a level $GT-$variety and its associated $RL-$variety and we give examples of any dimension. Using the new methods of \cite{Alzati-Re}, we compute the cohomology table of the normal bundle of any $RL-$variety (see Theorem \ref{Theorem: dimension cohomology normal}). 

\vskip 4mm \noindent
 {\bf Acknowledgements.}  The authors are grateful to the anonymous referee for providing detailed comments which have improved the exposition of this paper.
\section{Preliminaries}
\label{Section: preliminaries}

In this section, we introduce the main objects and results we use in the body of this paper. First, we define semigroups and normal semigroups, we relate them to invariant theory of finite groups
and we see a geometrical interpretation of these objects. For more details the reader can look at \cite{Hochster}, \cite{Bruns-Herzog} and \cite{Sturmfelds}. Finally, we define the weak Lefschetz property, we recall the notions of $GT-$systems and $GT-$varieties and we collect some basic results on this topic. 

\vspace{0.3cm}
\noindent{\bf Semigroup rings and rings of invariants.}
\label{semigroup rings and rings of Invariant}
By a {\em semigroup} we mean a finitely generated additive subsemigroup $H = \langle h_{1}, \hdots, h_{t} \rangle \subset \ZZ_{\geq 0}^{n+1}$. $L(H)$ is the additive subgroup of $\ZZ^{n+1}$ generated by $H$. We denote by $\kk[H] \subseteq R$ the semigroup ring associated to $H$, i.e. the graded $\kk-$algebra generated by the monomials $X^{h_{j}} = x_0^{a_0^j}\cdots x_n^{a_n^{j}} \in R$ associated to the points $h_{j} = (a_0^{j},\hdots,a_n^{j}) \in H$, $j = 1,\hdots,t$.

\begin{definition} \rm  \label{Definition: Normal semigroup} A semigroup $H \subset \ZZ^{n+1}$ is called {\em normal} if it satisfies the following condition: if $zh \in H$ for some $h \in L(H)$ and $0 \neq z \in \ZZ_{\geq 0}$, then $h \in H$. 
\end{definition}

A large family of normal semigroups comes from invariant theory, precisely those associated to  finite abelian groups acting on $R$. To be more precise, let $\Lambda \simeq \ZZ/d_1\ZZ  \oplus \cdots \oplus \ZZ/d_r\ZZ$ and choose $d_{i}$-th primitive roots $e_{i}$ of $1\in \kk$ , $i = 1, \hdots, r$. Therefore $\Lambda$ can be linearly represented in $\GL(n+1,\kk)$ by means of $r$ diagonal matrices $\diag(e_{i}^{u_{0,i}}, \hdots, e_{i}^{u_{n,i}})$, where $u_{j,i} \in \ZZ_{\geq 0}$, $0 \leq j \leq n$, $1 \leq i \leq r$. Let $R^{\Lambda} := \{p \in R \,\mid\, \lambda(p) = p \; \text{for all} \; \lambda \in \Lambda\}$ be the ring of invariants of $\Lambda$ acting on $R$. Since $\Lambda$ acts diagonally, each monomial $x_{0}^{a_{0}} \cdots x_{n}^{a_{n}} \in R$ is mapped into a multiple of itself by every $\lambda \in \Lambda$, and a polynomial $p \in R^{\Lambda}$ if and only if all its monomials are invariants of $\Lambda$. Thus, by the Noether's degree bound (see \cite[Theorem 2.1.4]{Sturmfelds}), $R^{\Lambda}$ has a finite basis consisting of monomials of degree at most the order of $\Lambda$. By a basis of $R^{\Lambda}$ we mean a set of elements $\{\theta_1,\hdots, \theta_l\} \subset R^{\Lambda}$ which minimally generates $R^{\Lambda}$ as a $\kk-$algebra, i.e. $R^{\Lambda} = \kk[\theta_1,\hdots,\theta_{l}]$. 
Let $X^{h_{1}}, \hdots, X^{h_{t}}$ be a monomial basis of $R^{\Lambda}$ and $H = \langle h_{1}, \hdots, h_{t} \rangle$. Then $R^{\Lambda} = \kk[H]$.
Furthermore, a monomial $x_{0}^{a_{0}} \cdots x_{n}^{a_{n}} \in R^{\Lambda}$ if and only if $(a_{0}, \hdots, a_{n})$ satisfies the system of congruences:
\begin{equation}
a_{0}u_{0,i} + \cdots + a_{n}u_{n,i} \equiv 0 \pmod{d_i},\  i = 1,\hdots, r.
\end{equation}
Now, if $w \in L(H)$ is such that $zw \in H$ for some $z \in \ZZ_{\geq 0}$, then  $w \in H$, so $H$ is normal. By \cite[Theorem 1]{Hochster} or \cite[Proposition 13]{Hochster-Eagon}, $\kk[H]$ is Cohen Macaulay. 


More generally, let $G \subset \GL(n+1,\kk)$ be a finite group. Geometrically, the ring  $R^{G}$ of invariants can be regarded as the coordinate ring of the quotient of $\mathbb A^{n+1}$ by $G$. To be more precise, set $\{f_{1},\hdots, f_{t}\}$ a basis of $R^{G}$, often called a set of {\em fundamental invariants of $G$},  and let $\kk[w_{1},\hdots, w_{t}]$ be  the polynomial ring in the new variables $w_{1},\hdots, w_{t}$. Then the quotient of $\mathbb A^{n+1}$ by $G$ is given by the morphism $\pi: \mathbb A^{n+1}\to \pi(\mathbb A^{n+1})\subset \mathbb A^t$, such that $\, \pi(a_{0},\hdots, a_{n})= (f_{1}(a_{0},\hdots, a_{n}), \hdots, f_{t}(a_{0},\hdots, a_{n}))$. Even further, $\pi$ is a Galois covering of $\pi(\af^{n+1})$ with group $G$. For further details on quotients varieties we refer the reader to \cite{Serre}. The ideal $\I(\pi(\mathbb A^{n+1}))$ of the quotient variety is called the {\em ideal of syzygies} among the invariants $f_{1},\hdots,
f_{t}$; it is the kernel of the homomorphism defined by $w_{i} \to f_{i}$, $i = 1,\hdots, t$. We denote it by $\syz(f_{1},\hdots, f_{t})$. We summarize all these facts in the following proposition. 

\begin{proposition}\label{Proposition: qv by fg acting linearly on poly ring} Let $G \subset \GL(n+1,\kk)$ be a finite linear group, $\{f_{1},\hdots, f_{t}\}$ be a set of fundamental invariants and let $\pi: \af^{n+1} \to \af^{t}$ be the induced morphism. Then,
\begin{itemize}
\item [(i)] $\pi(\af^{n+1})$ is the quotient of $\mathbb{A}^{n+1}$ by $G$ with affine coordinate ring $R^{G}$.
\item [(ii)] $R^{G} \cong \kk[w_{1},\hdots, w_{t}]/\syz(f_{1},\hdots, f_{t}).$

\item [(iii)] $\pi$ is a Galois covering of $\pi(\af^{n+1})$ with group $G$. 
\end{itemize}
\end{proposition}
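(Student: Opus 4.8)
The plan is to prove the three assertions in the order (ii), (i), (iii): statement (ii) is purely algebraic and feeds the geometric content of (i) and (iii). For (ii), consider the $\kk$-algebra homomorphism $\phi\colon \kk[w_1,\dots,w_t]\to R^{G}$ sending $w_i\mapsto f_i$. Since $\{f_1,\dots,f_t\}$ is a set of fundamental invariants we have $R^{G}=\kk[f_1,\dots,f_t]$, so $\phi$ is surjective, and its kernel is by definition $\syz(f_1,\dots,f_t)$; hence $R^{G}\cong \kk[w_1,\dots,w_t]/\syz(f_1,\dots,f_t)$. Moreover $R^{G}\subset R$ is a domain, so $\syz(f_1,\dots,f_t)$ is a prime ideal, and once (i) shows that $\pi(\af^{n+1})$ is exactly the zero locus of $\syz(f_1,\dots,f_t)$, it follows that $\pi(\af^{n+1})$ is an irreducible affine variety with coordinate ring $R^{G}$ and that $\I(\pi(\af^{n+1}))=\syz(f_1,\dots,f_t)$.

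For (i), I would first observe that each coordinate $x_j\in R$ is integral over $R^{G}$, being a root of the monic polynomial $\prod_{g\in G}\bigl(T-g(x_j)\bigr)\in R^{G}[T]$. Thus $R$ is a finitely generated integral, hence module-finite, extension of $R^{G}$, so the morphism $\operatorname{Spec}R\to \operatorname{Spec}R^{G}$ is finite, in particular surjective and closed. Since $\pi$ factors as $\af^{n+1}=\operatorname{Spec}R\to \operatorname{Spec}R^{G}\hookrightarrow \af^{t}$, where the last map is the closed immersion cut out by $\syz(f_1,\dots,f_t)$ by (ii), we obtain $\pi(\af^{n+1})=V(\syz(f_1,\dots,f_t))\cong \operatorname{Spec}R^{G}$, with affine coordinate ring $R^{G}$. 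It then remains to identify the fibres of $\pi$ with the $G$-orbits, which is what makes $\pi(\af^{n+1})$ the quotient $\af^{n+1}/G$. One inclusion is clear since invariants are constant on orbits. For the converse, let $a,b\in\af^{n+1}$ with $b\notin G\cdot a$; then $G\cdot a$ and $G\cdot b$ are disjoint finite sets, so by interpolation there is $h\in R$ with $h(a)=1$ and $h$ vanishing on $(G\cdot a)\setminus\{a\}$ and on $G\cdot b$. Averaging over $G$ yields the invariant $\widetilde h:=\tfrac1{|G|}\sum_{g\in G}g(h)\in R^{G}$, and a direct computation gives $\widetilde h(a)\ge \tfrac1{|G|}>0$ while $\widetilde h(b)=0$, so some invariant separates $a$ and $b$. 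Hence $\pi^{-1}(\pi(a))=G\cdot a$ for every $a$, and $\pi$ is the geometric quotient of $\af^{n+1}$ by $G$.

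For (iii), since $G\subset\GL(n+1,\kk)$ it acts faithfully and $\kk$-linearly on $R$, hence faithfully on $L:=\operatorname{Frac}(R)=\kk(x_0,\dots,x_n)$. By Artin's theorem $L/L^{G}$ is Galois with group $G$, and $L^{G}=\operatorname{Frac}(R^{G})$ because $R$ is integral over $R^{G}$ (any $G$-invariant quotient $p/q$ can be rewritten with the $G$-invariant denominator $\prod_{g\in G}g(q)$, forcing the resulting numerator to lie in $R^{G}$ as well). Thus $\pi\colon\af^{n+1}\to\pi(\af^{n+1})$ is a finite dominant morphism of irreducible affine varieties inducing a Galois extension of function fields with group $G$, i.e.\ a Galois covering with group $G$; see \cite{Serre}.

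Everything except the orbit-separation step is formal bookkeeping with the first isomorphism theorem, integrality, and Galois theory. The one point requiring genuine care is producing, from an interpolating polynomial, an actual \emph{invariant} distinguishing two points lying in distinct $G$-orbits, and, complementarily, invoking the integrality of $R$ over $R^{G}$ at the outset to guarantee that the image $\pi(\af^{n+1})$ is in fact closed so that it can be identified with $\operatorname{Spec}R^{G}=V(\syz(f_1,\dots,f_t))$.
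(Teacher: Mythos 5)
Your proof is correct. Note that the paper does not actually prove this proposition: it simply cites Stanley's survey (Section 6 of \cite{Stanley}), so there is no internal argument to compare against. What you have written is the standard self-contained proof of these classical facts, and every step checks out: (ii) is the first isomorphism theorem applied to $w_i\mapsto f_i$, using that fundamental invariants generate $R^{G}$ as a $\kk$-algebra; the integrality of each $x_j$ over $R^{G}$ via $\prod_{g\in G}(T-g(x_j))$ makes $\pi$ finite, hence closed with image $V(\syz(f_1,\hdots,f_t))\cong\operatorname{Spec}R^{G}$; the orbit separation via the averaging operator $\tfrac1{|G|}\sum_{g\in G}g(h)$ is exactly the Reynolds-operator argument (legitimate here since the paper works in characteristic zero); and Artin's lemma together with $L^{G}=\operatorname{Frac}(R^{G})$ (cleared denominators by $\prod_{g}g(q)$) gives the Galois covering in (iii). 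The only stylistic remark is that your appeal to ``interpolation'' for a polynomial taking prescribed values on a finite set of points, and the identification of ``Galois covering'' with ``finite surjective morphism realizing the geometric quotient and inducing a Galois function-field extension,'' are both standard and consistent with the conventions the paper adopts from \cite{Serre} and \cite{Stanley}, so no gap remains.
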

\begin{proof} See \cite[Section 6]{Stanley}.
\end{proof}

The cardinality of a general orbit $G(a)$, $a \in \af^{n+1}$, is called the degree of the covering. Moreover, if we can find a homogeneous set of fundamental invariants $\{f_1,\hdots, f_t\}$ of $G$ such that $\pi: \PP^{n} \to \PP^{t-1}$ is a morphism, then  the projective version of Proposition \ref{Proposition: qv by fg acting linearly on poly ring} is true.

\vspace{0.3cm}
\noindent{\bf GT--systems and GT--varieties.}
\label{Lefschetz properties and Togliatti systems}
Let $I\subset R $ be a homogeneous artinian ideal. We say that $I$ has the \emph{weak Lefschetz property}  (WLP)
if there is a linear form $L \in R_1$ such that, for all
integers $j$, the multiplication map
\[
\times L: (R/I)_{j-1} \to (R/I)_j
\]
has maximal rank. In \cite{MM-RO}, Mezzetti, Mir\'{o}-Roig and Ottaviani proved
that the failure of the WLP is related to the existence of varieties satisfying at least one Laplace
equation of order greater than 2. More precisely, they proved:

\begin{theorem} \label{tea} Let $I\subset R$ be an artinian
ideal
generated
by $r$ forms $F_1,\dotsc,F_{r}$ of degree $d$ and let $I^{-1}$ be its Macaulay inverse system.
If
$r\le \binom{n+d-1}{n-1}$, then
  the following conditions are equivalent:
\begin{itemize}
\item[(i)] $I$ fails the WLP in degree $d-1$;
\item[(ii)]  $F_1,\dotsc,F_{r}$ become
$\kk-$linearly dependent on a general hyperplane $H$ of $\PP^n$;
\item[(iii)] the $n$-dimensional   variety
 $Y=\overline{\varphi(\PP^{n})}$,
where
$\varphi \colon\PP^n \dashrightarrow \PP^{\binom{n+d}{n}-r-1}$ is the rational morphism associated to $(I^{-1})_d$,
  satisfies at least one Laplace equation of order
$d-1$.
\end{itemize}
\end{theorem}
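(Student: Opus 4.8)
The plan is to prove the chain of equivalences with (i) as the pivot, reducing everything to a single statement about the kernel of the multiplication map $\times L\colon (R/I)_{d-1}\to(R/I)_d$. We assume throughout, as is standard in this setting, that $F_1,\dots,F_r$ are $\kk$-linearly independent, i.e. a basis of $I_d$ (this is the case implicit in the dimension $\binom{n+d}{n}-r$ occurring in (iii)). First I would record the numerics: since $I$ is generated in degree $d$ we have $(R/I)_{d-1}=R_{d-1}$, of dimension $\binom{n+d-1}{n}$, while $(R/I)_d=R_d/I_d$ has dimension $\binom{n+d}{n}-r$. Pascal's identity $\binom{n+d}{n}=\binom{n+d-1}{n}+\binom{n+d-1}{n-1}$ together with the hypothesis $r\le\binom{n+d-1}{n-1}$ gives $\dim(R/I)_{d-1}\le\dim(R/I)_d$, so $\times L$ has maximal rank if and only if it is injective. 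Since the set of $L\in\PP(R_1)$ for which $\times L$ has maximal rank is open, $I$ fails the WLP in degree $d-1$ exactly when that set is empty, i.e. exactly when for a general (equivalently, every) $L\in R_1$ there is a nonzero $G\in R_{d-1}$ with $LG\in I_d$; call this assertion $(\ast)$.

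Next, (i)$\Leftrightarrow$(ii). For a general hyperplane $H=V(L)$, the restrictions $F_1|_H,\dots,F_r|_H$ are $\kk$-linearly dependent precisely when some nontrivial combination $\sum_i\mu_iF_i$ vanishes on $H$, i.e. lies in $(L)=L\cdot R_{d-1}$, i.e. equals $LG$ for some $G\in R_{d-1}$; as $R$ is a domain and $\sum_i\mu_iF_i\ne 0$, this is the same as the existence of a nonzero $G\in R_{d-1}$ with $LG\in I_d$ (in the converse one recovers the $\mu_i$ from $I_d=\langle F_1,\dots,F_r\rangle$). Hence for each $L$ the forms $F_i|_{V(L)}$ are independent if and only if $\times L$ is injective, so the two corresponding open subsets of $\PP(R_1)$ coincide; statement (ii) is the emptiness of this set, hence equivalent to $(\ast)$ and therefore to (i).

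Then (i)$\Leftrightarrow$(iii), the heart of the matter. Write $S=\kk[y_0,\dots,y_n]$ and use the perfect apolarity pairing $R_d\times S_d\to\kk$, so that $(I^{-1})_d=I_d^{\perp}$ has dimension $N:=\binom{n+d}{n}-r$ and $\varphi=[G_1:\cdots:G_N]$ for a basis $G_1,\dots,G_N$ of $(I^{-1})_d$; note $N\ge\binom{n+d-1}{n}$ by the hypothesis, so at a general point $p\in\PP^n$ the expected projective dimension of the order-$(d-1)$ osculating space of $Y$ at $\varphi(p)$ is $\binom{n+d-1}{n}-1$. The standard description of osculating spaces of a variety parametrized by a linear system of forms of degree $d$ gives
\[
\dim T^{(d-1)}_{\varphi(p)}Y=N-1-\dim\{\,G\in (I^{-1})_d : G \text{ vanishes to order }\ge d\text{ at }p\,\}.
\]
Choosing coordinates with $p=[1:0:\cdots:0]$, a degree-$d$ form vanishes to order $\ge d$ at $p$ exactly when it involves no $x_0$, i.e. lies in $(x_0R_{d-1})^{\perp}\subset S_d$; hence the subspace above equals $(I^{-1})_d\cap(x_0R_{d-1})^{\perp}=(I_d+x_0R_{d-1})^{\perp}$, of dimension $\binom{n+d}{n}-\dim(I_d+x_0R_{d-1})$. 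Substituting $\dim(I_d+x_0R_{d-1})=r+\binom{n+d-1}{n}-\dim(I_d\cap x_0R_{d-1})$ collapses the formula to
\[
\dim T^{(d-1)}_{\varphi(p)}Y=\binom{n+d-1}{n}-1-\dim(I_d\cap x_0R_{d-1}).
\]
So $Y$ satisfies a Laplace equation of order $d-1$ at the general point $p$ if and only if $I_d\cap x_0R_{d-1}\ne 0$; replacing $x_0$ by the general linear form $L$ corresponding to $p$, this is exactly $(\ast)$, hence equivalent to (i).

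The step I expect to be the main obstacle is this last one: setting up the apolarity / Macaulay inverse system pairing precisely and, above all, justifying the displayed formula for $T^{(d-1)}_{\varphi(p)}Y$ --- a Taylor-expansion argument that needs care about base points of the linear system $(I^{-1})_d$ at the general $p$ and about the choice of affine lift of $\varphi$ --- and then checking that the numerics line up so that ``osculating space of less than the expected dimension'' means exactly ``$I_d\cap x_0R_{d-1}\ne 0$''. A subsidiary matter is the genericity bookkeeping: that the bad loci of $L$ appearing in the three steps are one and the same open subset of $\PP(R_1)$, and that $Y=\overline{\varphi(\PP^n)}$ is genuinely $n$-dimensional (equivalently, $\varphi$ generically finite), so that the order-$(d-1)$ osculating space at a general point of $Y$ behaves as expected.
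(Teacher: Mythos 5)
Your proposal is correct; note, however, that the paper does not prove this statement itself but simply cites \cite[Theorem 3.2]{MM-RO}, and your argument is essentially a faithful reconstruction of the proof given there: the reduction of (i) and (ii) to the single condition $I_d\cap L\,R_{d-1}\neq 0$ for general $L$, and the apolarity/osculating-space computation identifying that same condition with the failure of the order-$(d-1)$ osculating space to attain its expected dimension $\binom{n+d-1}{n}-1$. The numerics, the identification of forms vanishing to order $\ge d$ at $p$ with $(L_p R_{d-1})^{\perp}$, and the genericity bookkeeping you flag are all handled as you describe in the cited source.
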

\begin{proof} See \cite[Theorem 3.2]{MM-RO}.
\end{proof}

Motivated by  the above results,   Mezzetti, Mir\'{o}-Roig and Ottaviani  introduced the following definitions (see \cite{MM-RO} and \cite{MM-R1}):
\begin{definition} \rm Let $I \subset R$ be an artinian ideal generated by $r \leq \binom{n+d-1}{n-1}$ forms  of degree $d$. We say that:

\begin{itemize}
\item[(i)] $I$ is a \emph{Togliatti system} if it fails the WLP in degree $d-1$.
\item[(ii)] $I$ is a \emph{monomial Togliatti system} if, in addition, $I$ can be generated
by monomials.
\end{itemize}
\end{definition}

In particular, a Togliatti system is called {\em smooth} if the variety $Y$ in Theorem \ref{tea}(iii) is smooth. The name is in honour of Togliatti who proved that for
$n = 2$ the only smooth
Togliatti system of cubics is \[I = (x_0^3,x_1^3,x_2^3,x_0x_1x_2)\subset \kk[x_0,x_1,x_2]\]
(\cite{T1} and \cite{T2}). The systematic study of Togliatti systems was initiated in \cite{MM-RO}
and it has been continuing in \cite{MM-R},  \cite{MM-R1}, \cite{AMRV}, \cite{M-RS} and \cite{MkM-R}. Precisely in \cite{MM-R1}, it was introduced the notion of \emph{GT-system} with group a finite cyclic group.  Recently in \cite{CMM-RS1}, this notion has been generalized as follows. 

\begin{definition} \rm \label{Defi:GT}
  A \emph{GT-system} with a finite group $G$ is an artinian ideal $I_{d}\subset R$ generated by $r$ forms $F_{1},\dotsc,F_{r}$ of degree $d$
  such that:
\begin{enumerate}
\item[(i)] $I_{d}$ is a Togliatti system.
\item[(ii)] The morphism $\varphi_{I_{d}}\colon\PP^{n}\rightarrow \PP^{r-1}$ defined by $(F_{1},\dotsc,F_{r})$ is a Galois covering with group $G$.
\end{enumerate}
If conditions (i) and (ii) holds, we say that $\varphi_{I_{d}}(\PP^{n})$ is a {\em $GT-$variety with group $G$.} 
\end{definition}

$GT-$systems with group a finite cyclic group have been extensively studied in \cite{MM-R}, \cite{CMM-RS} and \cite{CMM-R}, while in \cite{CMM-RS1}, the authors investigate $GT-$systems with the dihedral group acting on $\kk[x_0,x_1,x_2]$. In the last two references, invariant theory techniques have been applied to tackle both objects. Fix $G \subset \GL(n+1,\kk)$ a finite group of order $d$. Assume that the ring $R^{G}$ has a basis $\cB$ formed by homogeneous invariants of $G$ of degree $d$ and set $I_{d}$ the ideal generated by $\cB$. Keeping this notation, we have the following. 
\begin{proposition} If $|\cB| \leq \binom{d+n-1}{n-1}$, then $I_{d}$ is a $GT-$system with group $G$. 
\end{proposition}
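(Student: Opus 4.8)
The plan is to verify the two conditions in Definition \ref{Defi:GT} directly. First I would establish condition (ii), that $\varphi_{I_d}\colon \PP^n \to \PP^{|\cB|-1}$ defined by the homogeneous invariant basis $\cB = \{f_1,\dots,f_t\}$ is a Galois covering with group $G$. This is essentially the projective version of Proposition \ref{Proposition: qv by fg acting linearly on poly ring}: since all the $f_i$ are homogeneous of the same degree $d$ and $\cB$ is a fundamental set of invariants, the affine map $\pi\colon \af^{n+1}\to \af^t$ is a Galois covering with group $G$ by Proposition \ref{Proposition: qv by fg acting linearly on poly ring}(iii), and because the $f_i$ share the degree $d$ this map is equivariant for the scaling $\af^{n+1}$-action and $\af^t$-action, so it descends to a well-defined morphism $\varphi_{I_d}$ on $\PP^n$; one then checks that $G$ acts on $\PP^n$ (it contains no nontrivial homotheties that would act trivially — here one uses the hypothesis implicit in the cyclic case, $\alpha_i<\alpha_j$ for some $i\neq j$, ensuring $G$ acts faithfully on $\PP^n$) and that the general fibre of $\varphi_{I_d}$ is exactly a $G$-orbit, so $\varphi_{I_d}$ is again Galois with group $G$. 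I would cite the remark immediately after Proposition \ref{Proposition: qv by fg acting linearly on poly ring} for this descent.

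Second I would establish condition (i), that $I_d$ is a Togliatti system, i.e. that it fails the WLP in degree $d-1$. Since $I_d$ is generated by $|\cB|\le \binom{d+n-1}{n-1}$ forms of degree $d$, Theorem \ref{tea} applies, and it suffices to check condition (ii) of that theorem: the forms $f_1,\dots,f_t$ become $\kk$-linearly dependent when restricted to a general hyperplane $H\subset \PP^n$. The key observation is that on a general hyperplane there are strictly fewer than $\binom{d+n-1}{n-1}$ monomials of degree $d$ available, so any $|\cB|$ forms of degree $d$ restrict to a space of dimension at most $\binom{d+n-1}{n-1}$; but this crude count is not quite enough by itself. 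The cleaner route is: the invariants in $\cB$ span a subspace $V\subset R_d$ of dimension $|\cB|\le \binom{d+n-1}{n-1} = \dim R_d - \dim(R_1\cdot R_{d-1}\text{ complement})$... more precisely, $\dim R_d = \binom{n+d}{n}$ and restriction to a general hyperplane has kernel of dimension $\dim R_{d-1} = \binom{n+d-1}{n-1}$; so the restriction map $R_d \to (R/L)_d$ has image of dimension $\binom{n+d}{n} - \binom{n+d-1}{n-1} = \binom{n+d-1}{n}$. I would instead argue that the $f_i$, being $G$-invariant, satisfy a Laplace-type/dependency relation on a general hyperplane because the differential of $\varphi_{I_d}$ drops rank, or more directly invoke the already-known fact (stated in the introduction, from \cite{MM-R} and \cite{CMM-R}) that an invariant basis of degree $d$ of a finite group always yields a Togliatti system when the count condition holds — indeed this is precisely the content being asserted, and the earlier literature establishes that the span of $G$-invariants of degree $d$ restricted to a general hyperplane is deficient.

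The main obstacle is the Togliatti (WLP-failure) part, condition (i). The Galois-covering part is formal once the descent to projective space is in hand. For condition (i) the heart of the matter is showing the restricted forms $f_1|_H,\dots,f_t|_H$ are linearly dependent; the honest argument is that the linear system $|V|$ defining $\varphi_{I_d}$ is contained in the $d$th Veronese composed with a projection, and the invariance under $G$ forces the image variety $X_d$ to have dimension $n$ while lying in a projective space of dimension $|\cB|-1$, yet the expected/generic behaviour for $n$ forms would put it in general position unless a Laplace equation holds — equivalently, one shows $h^0(\PP^n,\mathcal{O}(d)\otimes \mathcal{I})$ computations force the osculating spaces to be degenerate. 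In the write-up I would reduce (i) to the linear-dependence statement via Theorem \ref{tea}(ii), and prove that statement by noting $|\cB| \le \binom{n+d-1}{n-1} < \binom{n+d-1}{n}$ combined with the fact that $G$-invariance makes the restriction map $V \to (R/L)_d$ non-injective — since the invariants of $G$ of degree $d$ on the general hyperplane $\{L=0\}$ span a space whose dimension is bounded by the number of invariant monomials surviving the restriction, which is strictly less than $|\cB|$ because at least one generator of $\cB$ (e.g. one involving the variable eliminated by a generic $L$, using $\alpha_i < \alpha_j$) maps into the span of the others. This last combinatorial point is where the real work sits, and I would carry it out carefully, possibly citing the relevant lemma from \cite{MM-R1} or \cite{CMM-R} which records exactly this fact for invariant bases.
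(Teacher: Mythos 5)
There is a genuine gap in the part that actually carries the content of the proposition, namely condition (i) of Definition \ref{Defi:GT}. Your treatment of condition (ii) (the Galois covering) matches the paper: it is Proposition \ref{Proposition: qv by fg acting linearly on poly ring} plus the remark on the projective version, and is fine. But for the failure of the WLP you never produce an actual argument. The numerical count alone cannot work: a general hyperplane restriction $R_d \to (R/(L))_d$ has image of dimension exactly $\binom{n+d-1}{n-1}$ (not $\binom{n+d-1}{n}$, and the kernel $L\cdot R_{d-1}$ has dimension $\binom{n+d-1}{n}$), so $|\cB|\le\binom{n+d-1}{n-1}$ is compatible with the restricted forms being linearly independent; the inequality is only there to ensure that $\dim(R/I_d)_{d-1}\le\dim(R/I_d)_d$, so that \emph{maximal rank} means \emph{injective}. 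Your remaining suggestions are either circular (invoking ``the already-known fact\ldots which is precisely the content being asserted'') or restate the goal (``at least one generator of $\cB$ maps into the span of the others'' is exactly Theorem \ref{tea}(ii), the thing to be proved); the claim about ``invariant monomials surviving the restriction'' does not make sense for a general, non-coordinate hyperplane. You also do not address why $I_d$ is artinian (the paper notes it contains a homogeneous system of parameters of $R^{G}$).

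The missing idea is the classical Togliatti trick, which is the whole of the paper's proof of (i): for any linear form $L$, set $F:=\prod_{\mathrm{Id}\neq g\in G} g(L)$, a form of degree $|G|-1=d-1$ that is nonzero in $(R/I_d)_{d-1}$ since $I_d$ is generated in degree $d$. Then $L\cdot F=\prod_{g\in G} g(L)$ is a $G$-invariant of degree $d$, hence lies in the $\kk$-span of $\cB$ (the degree-$d$ piece of $R^{G}$ is spanned by $\cB$ because $\cB$ generates $R^G$ in degree $d$), i.e.\ $L\cdot F\in (I_d)_d$. So $\times L:(R/I_d)_{d-1}\to(R/I_d)_d$ has nontrivial kernel for \emph{every} $L$, and by the dimension comparison above this is the failure of maximal rank in degree $d-1$. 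Without this (or an equivalent) construction, your proof does not establish that $I_d$ is a Togliatti system.
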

\begin{proof} Since $I_{d}$ contains a homogeneous system of parameters of $R^{G}$, it is an artinian ideal. By Proposition  \ref{Proposition: qv by fg acting linearly on poly ring}, the associated morphism $\varphi_{I_{d}}$ is a Galois covering with group $G$. By Theorem \ref{tea}, it is enough to prove that $I_{d}$ fails the WLP in degree $d-1$, i.e. for any linear form $L \in R_1$, the multiplication map $\times L: (R/I_d)_{d-1} \to (R/I_d)_d$ is not injective. Let $L \in R_1$ and consider $F := \prod_{Id \neq g \in G} g(L) \in R_{d-1}$. We have that $L\cdot F = \prod_{g \in G}g(L) \in R^{G}$ and, hence, $F \in \ker(\times L)$.
\end{proof}

Let us see an illustrative example.     
\begin{example} \rm (i) Fix $n = 2$, $d = 5$ and $e$ a $5$th primitive root of $1$. The finite cyclic group $\ZZ/5\ZZ$ can be linearly represented by $\Gamma = \langle \diag(1,e,e^{3}) \rangle \subset \GL(3,\kk)$. All the monomial invariants of $\Gamma$ of degree $5$ are: $x_{0}^5,x_{0}^{2}x_{1}^{2}x_{2}, x_{0}x_{1}x_{2}^{3}, x_{1}^{5}, x_{2}^{5}$ (see \cite[Example 2.15]{CMM-R}). In total we have $r = 5$ monomials. The inequality $r\leq \binom{n+d-1}{n-1}$ is satisfied and the ideal $I_5 \subset R$ generated by them fails the WLP in degree $4$. The morphism  $\varphi_{I_5}: \PP^{2} \to \PP^{4}$ is a Galois covering of degree $5$ with group $\Gamma$ (see \cite[Corollary 3.4]{CMM-R} and \cite[Theorem 3.4]{MM-R}). Actually, $\varphi_{I_5}(\PP^{2})$ is the quotient surface of $\PP^{2}$ by $\Gamma$. 

(ii) Fix $n = 3, d = 4$ and $e$ a $4$th primitive root of $1$. The diagonal matrix $\diag(1,e,e^2,e^3)$ generates a finite cyclic subgroup $\Gamma \subset \GL(4,\kk)$ of order $4$. There are exactly $r = 10$ monomial invariants of $\Gamma$ of degree $4$: 
$x_0^4, x_0^2x_1x_3, x_0^2x_2^2, x_0x_1^2x_2, x_0x_2x_3^2, x_1^4, x_1^2x_3^2, x_1x_2^2x_3, x_2^4, x_3^4$ (see \cite[Example 3.2]{CM-R}). The inequality $r \leq \binom{n+d-1}{n-1}$ is satisfied and the ideal $I_4$ generated by them fails the WLP in degree $3$. The associated morphism $\varphi_{I_{4}}: \PP^{3} \to \PP^{9}$ is a Galois covering with group $\Gamma$ (see \cite[Proposition 3.3]{CM-R} and \cite[Corollary 3.4]{CMM-R}). Similarly to (i), $\varphi_{I_{4}}(\PP^{3})$ is the quotient threefold of $\PP^{3}$ by $\Gamma$. 
\end{example} 

In \cite{CMM-R}, \cite{CMM-RS1}, and previously in \cite{CM-R}, 
the authors focus on the geometry of $GT-$varieties with group a finite cyclic group or a dihedral group. In \cite[Theorem 3.2]{CMM-R}, it is proved that all $GT-$varieties with group a finite cyclic group are aCM varieties and in \cite[Proposition 4.3]{CMM-RS1}, it is shown its analogous for $GT-$surfaces with a dihedral group. Furthermore, in \cite[Theorem 4.14]{CMM-R} and \cite[Theorem 4.6]{CMM-RS1}, it is determined a minimal free resolution of $GT-$surfaces with group a finite cyclic group and a dihedral group, respectively. Reference \cite{CM-R} is devoted to find a minimal set of homogeneous binomial generators of the homogeneous ideal of certain $GT-$threefolds with group a finite cyclic group. Our goal is to extend the results obtained so far for $GT-$surfaces and $GT-$threefolds to arbitrary $n$-dimensional $GT-$varieties with group a finite cyclic group.

\section{On the homogeneous ideal of GT--varieties}
\label{Section: homogeneous ideal} 
 
In this section, we look for a system of generators of the homogeneous ideal of $GT-$varieties with group a finite cyclic group. Using combinatorial techniques, we prove that all these ideals are generated by homogeneous binomials of degree $2$ and $3$. We begin introducing some notations. 

\begin{notation}\label{Notation: group cyclic} \rm Fix integers $2 \leq n < d$ and $e$ a $d$th primitive root of $1 \in \kk$. We denote by $M_{d;\alpha_0,\hdots, \alpha_{n}}$ the diagonal matrix $\diag(e^{\alpha_0}, \hdots, e^{\alpha_{n}})$, where $0 \leq \alpha_{0} \leq \cdots \leq \alpha_{n} < d$ are integers such that $\GCD(d,\alpha_0,\hdots, \alpha_n) = 1$ and $\alpha_i < \alpha_j$ for some $i \neq j$.
\end{notation}

Along this section, we fix a finite cyclic group $\Gamma := \langle M_{d;\alpha_0,\hdots, \alpha_n} \rangle \subset \GL(n+1,\kk)$ of order $d$ and we consider the ring $R^{\Gamma}$ of invariants of $\Gamma$ endowed with the natural grading $R^{\Gamma} = \bigoplus_{t \geq 1} R^{\Gamma}_{t}$, \; $R^{\Gamma}_{t} := R_t \cap R^{\Gamma}$. The cyclic extension $\overline{\Gamma} \subset \GL(n+1,\kk)$ of $\Gamma$ is the finite abelian group of order $d^2$ generated by $M_{d;\alpha_{0},\hdots, \alpha_{n}}$ and $M_{d;1,\hdots,1}$. We consider the ring $R^{\overline{\Gamma}}$ of invariants of $\overline{\Gamma}$ with the grading $R^{\overline{\Gamma}} = \bigoplus_{t \geq 1} R^{\overline{\Gamma}}_{t}$, \, $R^{\overline{\Gamma}}_{t} := R^{\Gamma}_{td} = R_{td} \cap R^{\Gamma}$, often called the $d$th Veronese subalgebra of $R^{\Gamma}$. We denote by $\cM_{d} := \{m_{1},\hdots, m_{\mu_{d}}\} \subset R$ the set of all monomial invariants of $\Gamma$ of degree $d$, ordered lexicographically.  We denote $I_{d} \subset R$ the monomial artinian ideal generated by $\cM_{d}$. By $\varphi_{I_{d}}: \PP^{n} \to \PP^{\mu_{d}-1}$, we denote the associated morphism and we set $X_{d} := \varphi_{I_{d}}(\PP^{n}) \subset \PP^{\mu_{d}-1}$ its image. In \cite{CMM-R}, it is established the following. 

\begin{theorem}\label{Theorem: Previos results CMM-R} (i) $\cM_{d}$ is a basis of $R^{\overline{\Gamma}}$.

(ii) $R^{\overline{\Gamma}}$ is the coordinate ring of $X_{d}$. Hence, $X_{d}$ is an aCM monomial projection of the Veronese variety $\nu_{d}(\PP^{n}) \subset \PP^{\binom{n+d}{n}-1}$ from the inverse system $I_{d}^{-1}$. 

(iii) If $\mu_{d} \leq \binom{d+n-1}{n-1}$, then $I_{d}$ is a $GT-$system with group $\Gamma$. In this case, we call $X_{d}$ a $GT-$variety with group $\Gamma$. 
\end{theorem}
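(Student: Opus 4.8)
The plan is to prove the three claims in turn; (i) is the combinatorial core, while (ii)--(iii) then follow by essentially formal arguments (so that (iii) is close to the Proposition stated just before the theorem). For (i), since $\overline{\Gamma}$ is abelian and acts diagonally, $R^{\overline{\Gamma}}$ is spanned by monomials, and $x_{0}^{a_{0}}\cdots x_{n}^{a_{n}}\in R^{\overline{\Gamma}}$ precisely when $d\mid a_{0}+\cdots+a_{n}$ (invariance under $\diag(e,\dots,e)$) and $a_{0}\alpha_{0}+\cdots+a_{n}\alpha_{n}\equiv 0\pmod{d}$ (invariance under $M_{d;\alpha_{0},\dots,\alpha_{n}}$); in particular $R^{\overline{\Gamma}}=\bigoplus_{t\ge 0}R^{\Gamma}_{td}$. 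It thus suffices to show that every monomial $m=x_{0}^{a_{0}}\cdots x_{n}^{a_{n}}$ of degree $td$ with $t\ge 2$ and $\sum_{i}a_{i}\alpha_{i}\equiv 0\pmod{d}$ is divisible by some $m_{j}\in\cM_{d}$; an induction on $t$ then yields $R^{\overline{\Gamma}}=\kk[m_{1},\dots,m_{\mu_{d}}]$. To produce such a divisor I would read the exponent vector as the sequence in $\ZZ/d\ZZ$ formed by $\alpha_{i}$ repeated $a_{i}$ times, of length $td\ge 2d>2d-1$, and apply the Erd\H{o}s--Ginzburg--Ziv theorem to extract a sub-sequence of length exactly $d$ with sum $\equiv 0\pmod{d}$: this is exactly a divisor $x_{0}^{b_{0}}\cdots x_{n}^{b_{n}}$ of $m$ with $\sum_{i}b_{i}=d$ and $\sum_{i}b_{i}\alpha_{i}\equiv 0\pmod{d}$, hence an element of $\cM_{d}$, while $m/(x_{0}^{b_{0}}\cdots x_{n}^{b_{n}})$ has degree $(t-1)d$ and satisfies the same congruence. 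Finally $\cM_{d}$ is a basis because $R^{\overline{\Gamma}}$ has no elements in degrees $1,\dots,d-1$, so no $m_{j}$ is a product of invariants of smaller positive degree.

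For (ii), note first that $I_{d}\supseteq(x_{0}^{d},\dots,x_{n}^{d})$ since each $x_{i}^{d}$ is $\Gamma$-invariant, so the $m_{j}$ have no common zero in $\PP^{n}$ and $\varphi_{I_{d}}$ is a morphism with closed image $X_{d}$. Consequently a form $F\in\kk[w_{1},\dots,w_{\mu_{d}}]$ vanishes on $X_{d}$ if and only if $F(m_{1},\dots,m_{\mu_{d}})=0$ in $R$, so the homogeneous coordinate ring $\kk[w_{1},\dots,w_{\mu_{d}}]/\I(X_{d})$ equals the image of $w_{j}\mapsto m_{j}$, namely $\kk[m_{1},\dots,m_{\mu_{d}}]=R^{\overline{\Gamma}}$ by (i) (the grading matched up to the factor $d$). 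Being the invariant ring of a finite diagonalizable group, $R^{\overline{\Gamma}}$ is a normal affine semigroup ring, hence Cohen--Macaulay by Hochster's theorem recalled in Section \ref{Section: preliminaries}, so $X_{d}$ is aCM. Moreover, the inclusion $\kk[\cM_{d}]\hookrightarrow\kk[\cM_{n,d}]$ of the former into the $d$th Veronese subalgebra of $R$, i.e. into the coordinate ring of $\nu_{d}(\PP^{n})$, realizes $X_{d}$ as the image of $\nu_{d}(\PP^{n})$ under the linear projection forgetting the coordinates indexed by $\cM_{n,d}\setminus\cM_{d}$; dually this is the projection from the inverse system $I_{d}^{-1}$, since $(I_{d}^{-1})_{d}$ is spanned by the degree-$d$ monomials outside $\cM_{d}$.

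For (iii), $I_{d}$ is artinian because it contains $(x_{0}^{d},\dots,x_{n}^{d})$. By (ii) the coordinate ring of $X_{d}$ is $R^{\overline{\Gamma}}$, and since $R^{\overline{\Gamma}}$ is the $d$th Veronese subalgebra of $R^{\Gamma}$ we have $\Proj R^{\overline{\Gamma}}=\Proj R^{\Gamma}=\PP^{n}/\Gamma$; hence by (the projective version of) Proposition \ref{Proposition: qv by fg acting linearly on poly ring} the morphism $\varphi_{I_{d}}\colon\PP^{n}\to X_{d}$ is the quotient map, a Galois covering with group $\Gamma$. It remains to see that $I_{d}$ fails the WLP in degree $d-1$: for any $L\in R_{1}$ the form $F:=\prod_{\mathrm{id}\ne g\in\Gamma}g(L)\in R_{d-1}$ satisfies $L\cdot F=\prod_{g\in\Gamma}g(L)\in R^{\Gamma}_{d}\subseteq I_{d}$, while $F\notin I_{d}$ because $(I_{d})_{d-1}=0$; thus $\times L\colon(R/I_{d})_{d-1}\to(R/I_{d})_{d}$ is not injective. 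Since $\mu_{d}\le\binom{d+n-1}{n-1}$, Theorem \ref{tea} then makes $I_{d}$ a Togliatti system, and combined with the Galois property $I_{d}$ is a $GT$-system with group $\Gamma$.

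The only step that is not pure bookkeeping is the generation statement in (i); it rests on the Erd\H{o}s--Ginzburg--Ziv extraction (or an equivalent partial-sums/pigeonhole argument tailored to the residues $\alpha_{0},\dots,\alpha_{n}$), the inequality $td\ge 2d-1$ for $t\ge 2$ being exactly what makes it applicable. Granting (i), parts (ii) and (iii) are routine given the material already assembled in the excerpt.
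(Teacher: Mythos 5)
Your argument is correct, and it reconstructs the cited proof using exactly the ingredients this paper itself assembles: the Erd\H{o}s--Ginzburg--Ziv extraction for the generation statement in (i) (the same device the paper reuses in Theorems \ref{Theorem: main theorem generators ideal} and \ref{Theorem: Canonical Module Gt-variety}), Hochster's normal-semigroup criterion from Section \ref{Section: preliminaries} for aCM-ness in (ii), and the $F=\prod_{\mathrm{id}\neq g}g(L)$ kernel element together with Theorem \ref{tea} for (iii), exactly as in the unnumbered Proposition of Section \ref{Section: preliminaries}. The paper itself only cites \cite{CMM-R} for this theorem, so there is nothing further to compare; your write-up is a faithful, complete version of the same approach.
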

\begin{proof}
See \cite[Theorem 3.1-3.3 and Corollary 3.4]{CMM-R}. 
\end{proof}

\begin{remark}\rm Through the rest of this paper, if the condition $\mu_d \leq \binom{d+n-1}{n-1}$ is satisfied, we refer to $I_{d}$ as a $GT-$system and to $X_{d}$ as a $GT-$variety. 
\end{remark}

\begin{example} \rm (i) Fix integers $2 \leq n$ and fix 
$\Gamma = \langle M_{n+1;0,1,2,\hdots,n} \rangle \subset \GL(n+1,\kk)$ (see \cite[Example 3.6(iii)]{CMM-R}). In \cite[Theorem 4.8]{CMM-RS}, it is proved that the condition $\mu_{n+1} \leq \binom{2n}{n-1}$ is satisfied, thus $I_d$ is a monomial Togliatti system. By Theorem \ref{Theorem: Previos results CMM-R}(iii), $I_{d}$ is a $GT-$system.

(ii) Fix integers $2 = n < d$ and fix $\Gamma = \langle M_{d;0,\alpha_1,\alpha_{2}} \rangle \subset \GL(3,\kk)$ (see \cite{MM-R} and \cite[Example 3.6(i)]{CMM-R}). In \cite[Theorem 3.4]{MM-R}, it is proved that the condition $\mu_d \leq \binom{n-1+d}{n-1}$ is satisfied, thus $I_d$ is a Togliatti system. By Theorem \ref{Theorem: Previos results CMM-R}(iii), $I_{d}$ is a $GT-$system.  
\end{example}

Let $w_{1},\hdots, w_{\mu_{d}}$ be new variables and set $S := \kk[w_{1},\hdots, w_{\mu_{d}}]$. We denote by $\I(X_{d}) \subset S$ the homogeneous ideal of $X_{d}$. By Proposition \ref{Proposition: qv by fg acting linearly on poly ring}, $\I(X_{d})$ is the kernel of the morphism $\rho: S \to R$ given by $\rho(w_{i}) = m_{i}$. It holds that $\I(X_{d})$ is the homogeneous binomial prime ideal generated by 
\[\W = \{w_{i_{1}}\cdots w_{i_{k}} - w_{j_{1}}\cdots w_{j_{k}} \in S \; \text{such that} \;  m_{i_{1}}\cdots m_{i_{k}} = m_{j_{1}}\cdots m_{j_{k}}, \; k \geq 2\}.\]
For any $k \geq 2$, we denote by $\Wk$ the set of all binomials of $\W$ of degree exactly $k$. 
Our goal is to prove that $\I(X_{d})$ is generated by binomials of degree $2$ and $3$, i.e. the ideal $I(X_d) = \langle \mathcal{W}_{d,2}, \mathcal{W}_{d,3} \rangle$. Through families of examples in \ref{Example: 3fold homo ideal and canonical module} we observe that this bound is sharp. We start with some definitions.  

\begin{definition}\rm 
Fixed $k \geq 2$, we define a {\em suitable $k-$binomial} to be a non-zero  binomial $w^{\alpha} = w^{\alpha_{+}} - w^{\alpha_{-}} := \prod_{l=1}^{k}
w_{i_{l}} - \prod_{l=1}^{k} w_{j_{l}} \in \Wk$, i.e. $\prod_{l=1}^{k}m_{i_l} = \prod_{l=1}^{k}m_{j_l}$.
\end{definition}

\begin{definition}\rm
Given a suitable $k-$binomial $w^{\alpha} = w^{\alpha_{+}}-w^{\alpha_{-}} \in \Wk$, we denote by $\supp_{+}(w^{\alpha})$ (respectively $\supp_{-}(w^{\alpha})$) the support of the monomial $w^{\alpha_{+}}$ (respectively
support of $w^{\alpha_{-}}$). We say that $w^{\alpha}$ is {\em non trivial} if $\supp_{+}(w^{\alpha}) \cap \supp_{-}(w^{\alpha}) = \emptyset$. Otherwise, we say that $w^{\alpha}$ is {\em trivial}.
\end{definition}

\begin{definition}\rm Let $w^{\alpha} = w^{\alpha_{+}}- w^{\alpha_{-}}\in \Wk$ be a non trivial suitable $k-$binomial. By an $\I(X_{d})_{k}-${\em sequence} from $w^{\alpha_{+}}$ to $w^{\alpha_{-}}$ we mean a finite sequence $(w^{1}, \hdots, w^{t})$ of monomials of $S$ satisfying the following two conditions:
\begin{itemize}
\item[(i)] $w^{1} = w^{\alpha_{+}}, w^{t} = w^{\alpha_{-}}$ and
\item[(ii)] For all $1\le j<t$,  $w^{j}-w^{j+1}$ is  a trivial suitable $k-$binomial.
\end{itemize}
\end{definition}

\begin{example}\label{Example: 3fold homo ideal and canonical module} \rm
Let $\Gamma = \langle M_{6;0,1,2,3} \rangle \subset \GL(4,\kk)$ be a finite group of order $6$. There are $\mu_{6} = 16$ monomial invariants of $\Gamma$, we have:

\[\begin{array}{lll}
\cM_{6} & = &\{x_0^6, x_0^4x_3^2, x_0^3x_1x_2x_3, x_0^3x_2^3, x_0^2x_1^3x_3, x_0^2x_1^2x_2^2, x_0^2x_3^4, x_0x_1^4x_2, x_0x_1x_2x_3^3, x_0x_2^3x_3^2, x_1^6, x_1^3x_3^3, \\
& &  x_1^2x_2^2x_3^2, x_1x_2^4x_3, x_2^6, x_3^6\}, 
\end{array}\]
By Theorem \ref{Theorem: Previos results CMM-R}(iii), the ideal $I_6$ generated by them is a $GT-$system and its associated variety $X_6$ is a $GT-$variety. The homogeneous binomials $w_1w_{15} - w_4^2$ and $w_3w_{12}w_{15} - w_6w_9w_{14}$ are non trivial suitable binomials of degree $2$ and $3$, respectively.  
Indeed, $\rho(w_1w_{15}) = \rho(w_1)\rho(w_{15}) = (x_0^6)(x_2^6) = (x_0^3x_2^3)^2 = \rho(w_4)^2$
and on the other hand $\rho(w_3)\rho(w_{12})\rho(w_{15}) = (x_0^3x_1x_2x_3)(x_1^3x_3^3)(x_2^6) = (x_0^2x_1^2x_2^2)(x_0x_1x_2x_3^2)(x_1x_2^4x_3) = \rho(w_6)\rho(w_9)(\rho(w_{14}))$. Finally,  $\{w_3w_{12}w_{15}, $ $w_5w_9w_{15},w_6w_9w_{14}\}$ is an $\I(X_6)_3-$sequence from $w_3w_{12}w_{15}$ to $
w_{6}w_9 w_{14}$. 
 
\end{example}

The following result characterizes the inclusion of ideals $\langle \Wk \rangle \subset \langle \mathcal{W}_{d,k-1} \rangle$, $k \geq 3$. It is key to prove our main results, which we state later. 

\begin{proposition}\label{Proposition: Criterion sequence} Fix $k \geq 3$ and let $w^{\alpha} = w^{\alpha_{+}}-w^{\alpha_{-}} \in \Wk$ be a suitable $k-$binomial. Then $w^{\alpha} \in \langle \mathcal{W}_{d,k-1} \rangle$ if and only if there exists an $\I(X_{d})_{k}-$sequence from $w^{\alpha_{+}}$ to $w^{\alpha_{-}}$.
\end{proposition}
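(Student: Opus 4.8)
The plan is to prove both implications by a direct argument at the level of binomials, using the monomial structure of $\langle \mathcal{W}_{d,k-1}\rangle$. For the ``if'' direction, suppose $(w^1,\hdots,w^t)$ is an $\I(X_d)_k$-sequence from $w^{\alpha_+}$ to $w^{\alpha_-}$. Then by telescoping we can write
\[
w^{\alpha_+}-w^{\alpha_-}=\sum_{j=1}^{t-1}\bigl(w^j-w^{j+1}\bigr),
\]
and each $w^j-w^{j+1}$ is a \emph{trivial} suitable $k$-binomial, so $\supp_+$ and $\supp_-$ share at least one variable $w_s$. Factoring that variable out, $w^j-w^{j+1}=w_s\cdot(u^j-u^{j+1})$ where $u^j-u^{j+1}$ is a suitable $(k-1)$-binomial (the equality of images persists after cancelling $m_s$ on both sides, since $R$ is a domain). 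Hence each summand lies in $\langle \mathcal{W}_{d,k-1}\rangle$, and therefore so does $w^\alpha$. One must be slightly careful that $u^j-u^{j+1}$ might itself be zero (if $w^j=w^{j+1}$), but in that case the summand is $0$ and there is nothing to check.

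For the ``only if'' direction, assume $w^\alpha=w^{\alpha_+}-w^{\alpha_-}\in\langle\mathcal{W}_{d,k-1}\rangle$. Write $w^\alpha=\sum_i g_i\,b_i$ with $b_i\in\mathcal{W}_{d,k-1}$ and $g_i$ monomials of degree $1$ in $S$, say $g_i=w_{s_i}$ (it suffices to take the $g_i$ to be monomials since $\mathcal{W}_{d,k-1}$ consists of binomials and $S$ is generated in degree $1$; one reduces to homogeneous degree $k$, where all terms $g_ib_i$ have a linear monomial coefficient). Each $w_{s_i}b_i$ is then a suitable $k$-binomial, and crucially it is \emph{trivial}: the variable $w_{s_i}$ divides both of its monomials. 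So the hypothesis exhibits $w^{\alpha_+}-w^{\alpha_-}$ as a $\ZZ$-linear combination of trivial suitable $k$-binomials. Now one must convert such a combination into a \emph{sequence}. The mechanism is the standard graph/connectedness argument: form the graph $G$ whose vertices are all degree-$k$ monomials $w^\beta$ of $S$ with $\rho(w^\beta)=\rho(w^{\alpha_+})$ (equivalently, lying in the same fiber of $\rho$), and join two vertices by an edge whenever their difference is a trivial suitable $k$-binomial. An $\I(X_d)_k$-sequence from $w^{\alpha_+}$ to $w^{\alpha_-}$ is exactly a path in $G$ between these two vertices. To produce one, note that $w^\alpha=\sum_i \pm(w_{s_i}b_i)$ expresses $w^{\alpha_+}-w^{\alpha_-}$ in the free abelian group on the monomials of the fiber as a sum of ``edge vectors'' $e_{u}-e_{v}$ of $G$; a sum of edge vectors equals $e_{w^{\alpha_+}}-e_{w^{\alpha_-}}$ only if $w^{\alpha_+}$ and $w^{\alpha_-}$ lie in the same connected component of $G$, and then a path between them is precisely the desired sequence.

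The main obstacle is the last step: turning an algebraic identity $w^{\alpha_+}-w^{\alpha_-}=\sum_i\pm(w_{s_i}b_i)$ into a genuine path in $G$. The clean way is the cycle-space argument: consider the simplicial/cellular chain complex of $G$ (vertices and edges); the boundary map sends an edge $\{u,v\}$ to $e_u-e_v$, its image is spanned by all $e_u-e_v$ over edges, and $e_{w^{\alpha_+}}-e_{w^{\alpha_-}}$ lies in that image if and only if $w^{\alpha_+}$ and $w^{\alpha_-}$ are in the same component. Since the relation exhibits $e_{w^{\alpha_+}}-e_{w^{\alpha_-}}$ in the image of $\partial$, connectedness follows, and any path in the component gives the sequence (condition (i) and (ii) of the definition being automatic along edges of $G$). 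I would spell this out carefully because it is the conceptual heart; the factoring-out-a-common-variable bookkeeping in both directions, and the reduction to monomial coefficients $g_i$, are routine given that $R$ is a domain and everything is homogeneous with respect to the grading $\deg w_i=1$.
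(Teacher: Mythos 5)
Your proposal is correct, and both halves are sound: the ``if'' direction by telescoping and cancelling the shared variable $w_s$ (valid since $R$ is a domain and $k-1\geq 2$, so $u^j-u^{j+1}\in\mathcal{W}_{d,k-1}$), and the ``only if'' direction by the fiber-graph connectedness argument. Note that the paper itself gives no argument at this point --- it only cites \cite[Proposition 5.4]{CM-R} --- so your write-up is a complete self-contained proof of what the paper leaves to a reference; the graph/boundary-map mechanism you describe is the standard one for such ``Markov move'' criteria and is essentially what that reference carries out. Two small points to tighten: the coefficients in $w^{\alpha}=\sum_l c_l\,w_{s_l}b_l$ live in $\kk$, not $\ZZ$ (harmless, since the component-sum argument works over any field because $1\neq 0$), and before invoking the chain complex of $G$ you should say explicitly that every edge binomial $w_{s_l}b_l$ has both of its monomials in a single fiber of $\rho$, so the terms supported in fibers other than that of $w^{\alpha_{+}}$ cancel among themselves and may be discarded, leaving a relation entirely inside the free module on the fiber of $w^{\alpha_{+}}$.
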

\begin{proof} We apply the same arguments as in \cite[Proposition 5.4]{CM-R}.
\end{proof}

We need the following lemma: 
\begin{lemma}\label{Lemma: EGZ} Any sequence of $2d-1$ integers in $\{0,\hdots,d-1\}$ contains some subsequence of $d$ integers the sum of which is a multiple of $d$.
\end{lemma}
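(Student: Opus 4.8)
The statement to prove is the Erd\H{o}s--Ginzburg--Ziv theorem: any sequence of $2d-1$ integers contains a subsequence of $d$ terms whose sum is divisible by $d$. The plan is to follow the classical two-step reduction. \emph{First}, I would reduce to the case where $d = p$ is prime. This uses the multiplicativity of the statement: if the result holds for $d_1$ and $d_2$, it holds for $d_1 d_2$. Given $2d_1 d_2 - 1$ integers with $d = d_1 d_2$, one repeatedly extracts $d_1$-term subsequences summing to a multiple of $d_1$ (possible as long as at least $2d_1 - 1$ integers remain unused; since $2d_1d_2 - 1 \geq (2d_2-1)d_1 + (d_1-1)$, one can extract $2d_2 - 1$ disjoint such blocks $B_1, \dots, B_{2d_2-1}$). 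Writing $\frac{1}{d_1}\sum_{x \in B_i} x =: s_i$, apply the prime-power-free case for $d_2$ to the $s_i$: some $d_2$ of them have sum divisible by $d_2$, and the union of the corresponding blocks is a $d_1 d_2 = d$-term subsequence with sum divisible by $d_1 d_2$.

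\emph{Second}, I would prove the prime case $d = p$ using the Chevalley--Warning theorem (or the Cauchy--Davenport inequality; I'll sketch the Chevalley--Warning route). Label the $2p-1$ integers $a_1, \dots, a_{2p-1}$ and introduce variables $y_1, \dots, y_{2p-1}$ over $\mathbb{F}_p$. Consider the two polynomials
\[
f_1 = \sum_{i=1}^{2p-1} y_i^{p-1}, \qquad f_2 = \sum_{i=1}^{2p-1} a_i\, y_i^{p-1}
\]
over $\mathbb{F}_p$. Their total degree is $2(p-1) < 2p - 1$, which is the number of variables, so by Chevalley--Warning the number of common zeros is divisible by $p$; since $(0,\dots,0)$ is one, there is a nonzero common zero $(y_1, \dots, y_{2p-1})$. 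Let $J = \{i : y_i \neq 0\}$; then $y_i^{p-1} = 1$ for $i \in J$ by Fermat's little theorem, so $f_1 = 0$ gives $|J| \equiv 0 \pmod p$, and since $1 \leq |J| \leq 2p-1$ we get $|J| = p$. Meanwhile $f_2 = 0$ reads $\sum_{i \in J} a_i \equiv 0 \pmod p$, which is exactly the desired $p$-term subsequence.

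The main obstacle is purely bookkeeping in the first (multiplicative) step: one must carefully verify that at each stage of the iterated extraction there are indeed at least $2d_1 - 1$ unused integers, so that enough disjoint blocks $B_1, \dots, B_{2d_2 - 1}$ can be produced, and then confirm that the selected sub-collection of blocks has total size exactly $d_1 d_2$ and total sum divisible by $d_1 d_2$ rather than merely by $d_1$ or $d_2$ separately. The prime case, by contrast, is a clean one-shot application of Chevalley--Warning once the right pair of polynomials is written down. An alternative for the prime case, avoiding Chevalley--Warning, is to sort $a_1 \leq \cdots \leq a_{2p-1}$ in $\{0,\dots,p-1\}$, pair up $a_i$ with $a_{i+p-1}$ for $i = 1, \dots, p-1$, and use the Cauchy--Davenport theorem on the sumset $\{a_1, a_p\} + \{a_2, a_{p+1}\} + \cdots + \{a_{p-1}, a_{2p-2}\} + \{a_{2p-1}\}$ to show it has size $\geq p$, hence is all of $\mathbb{Z}/p\mathbb{Z}$ and in particular contains $0$; I would present whichever is shorter in context, but the Chevalley--Warning argument is the most self-contained.
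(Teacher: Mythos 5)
This lemma is the Erd\H{o}s--Ginzburg--Ziv theorem, and the paper does not prove it at all: its ``proof'' is the single line ``See \cite[Theorem]{EGZ}'', i.e.\ a citation of the original 1961 paper. Your proposal, by contrast, supplies the standard self-contained argument, and it is correct. The multiplicative reduction is sound: your count $2d_1d_2-1 = (2d_2-1)d_1 + (d_1-1)$ shows exactly that after extracting $k \le 2d_2-2$ disjoint $d_1$-blocks at least $2d_1-1$ elements remain, so $2d_2-1$ blocks can be produced, and applying the $d_2$-case to the integers $s_i$ yields a union of $d_2$ blocks of total size $d_1d_2$ whose sum is $d_1\cdot(\text{multiple of }d_2)$, hence divisible by $d_1d_2$. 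The prime case via Chevalley--Warning is also complete: $\deg f_1 + \deg f_2 = 2p-2 < 2p-1$, the origin is a common zero, and Fermat's little theorem converts a nonzero common zero into a $p$-element index set $J$ with $\sum_{i\in J} a_i \equiv 0 \pmod p$. The only blemish is in your alternative Cauchy--Davenport sketch: after sorting you must first dispose of the case $a_i = a_{i+p-1}$ for some $i$ (where the block $a_i,\dots,a_{i+p-1}$ of $p$ equal residues already works), since otherwise some $A_i$ is a singleton and the sumset bound degrades; but since you present that route only as an optional variant, it does not affect the validity of the main argument. In short, you prove something the paper merely imports, which makes your write-up more self-contained at the cost of length; given that EGZ is a classical black box, the paper's choice to cite it is also entirely reasonable.
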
 
\begin{proof}
See \cite[Theorem]{EGZ}.
\end{proof}

\begin{theorem}\label{Theorem: main theorem generators ideal} Let $\Gamma = \langle M_{d;\alpha_0,\hdots,\alpha_n} \rangle \subset \GL(n+1,\kk)$ be a cyclic group of order $d$ and $X_d \subset \PP^{\mu_{d}-1}$ the variety parameterized by the ideal $I_d = (m_1,\hdots, m_{\mu_{d}})$ generated by all monomial invariants of $\Gamma$ of degree $d$. Then, the homogeneous ideal $\I(X_{d})$ of $X_d$ is generated by quadrics and cubics. Precisely, $\I(X_d) = \langle \mathcal{W}_{d,2}, \mathcal{W}_{d,3} \rangle$. 
\end{theorem}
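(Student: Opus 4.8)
The plan is to show that every suitable $k$-binomial with $k\ge 4$ lies in $\langle\mathcal{W}_{d,k-1}\rangle$; then by descending induction $\I(X_d)=\langle\mathcal{W}_{d,2},\mathcal{W}_{d,3}\rangle$. By Proposition \ref{Proposition: Criterion sequence} it suffices, given a \emph{non trivial} suitable $k$-binomial $w^{\alpha}=w^{\alpha_+}-w^{\alpha_-}\in\mathcal{W}_{d,k}$, to construct an $\I(X_d)_k$-sequence from $w^{\alpha_+}$ to $w^{\alpha_-}$; trivial binomials reduce immediately by cancelling a common variable. Write $w^{\alpha_+}=w_{i_1}\cdots w_{i_k}$ and $w^{\alpha_-}=w_{j_1}\cdots w_{j_k}$, so the monomials $m_{i_1}\cdots m_{i_k}=m_{j_1}\cdots m_{j_k}=:X^{c}$ with $c=(c_0,\dots,c_n)$, $c_0+\cdots+c_n=kd$. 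Each $m_{i_l}$ is a monomial of degree $d$ whose exponent vector $v_l\in\ZZ_{\ge0}^{n+1}$ satisfies $\langle v_l,(\alpha_0,\dots,\alpha_n)\rangle\equiv 0\pmod d$.

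\textbf{Key step.} The heart of the argument is a single "re-bracketing" move: starting from $w^{\alpha_+}=w_{i_1}\cdots w_{i_k}$, I want to replace the first two (or three) factors by a different product of invariant monomials of degree $d$ whose exponent sum agrees, producing a trivial suitable $k$-binomial one step closer to $w^{\alpha_-}$. Concretely, take the $2d$ indices coming from the exponent vectors $v_{i_1}$ and $v_{i_2}$ — that is, the multiset in which index $r\in\{0,\dots,n\}$ appears with multiplicity $(v_{i_1}+v_{i_2})_r$, a total of $2d$ entries, each lying in $\{0,\dots,n\}\subset\{0,\dots,d-1\}$ (recall $n<d$). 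Apply Lemma \ref{Lemma: EGZ} (Erd\H os--Ginzburg--Ziv, in the form quoted): among any $2d-1$ of them there is a sub-multiset of size exactly $d$ whose sum of the associated values $\alpha_r$ is $\equiv 0\pmod d$. That sub-multiset is the exponent vector $v'$ of a new monomial invariant $m'$ of degree $d$, and the complementary size-$d$ sub-multiset gives a second invariant $m''$ with $m'm''=m_{i_1}m_{i_2}$. Thus $w'w''\,w_{i_3}\cdots w_{i_k}$ equals $w^{\alpha_+}$ after re-labelling and differs from $w^{\alpha_+}$ by a trivial suitable $k$-binomial. Using such moves I can steer the exponent-vector decomposition of $X^{c}$: the idea is to drive one of the factors towards $m_{j_1}$ (e.g. by repeatedly peeling off, from the pooled exponents of two consecutive factors, a size-$d$ chunk that is "as close as possible" to the exponent vector of $m_{j_1}$), split it off, and recurse on the remaining product of $k-1$ invariants of degree $(k-1)d$ against $m_{j_2}\cdots m_{j_k}$, eventually matching $w^{\alpha_-}$ factor by factor.

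\textbf{Making the recursion terminate.} The subtlety — and the step I expect to be the main obstacle — is that a naive EGZ move need not make progress: the size-$d$ chunk it returns may not be $m_{j_1}$, so I need a potential function that strictly decreases. I would measure the "distance" from the current decomposition $(m'_1,\dots,m'_k)$ to the target $(m_{j_1},\dots,m_{j_k})$ by something like $\sum_l \|v'_l-v_{j_l}\|_1$ after an optimal matching, or by the number of target factors $m_{j_l}$ not yet realized exactly, and argue that as long as some target factor is missing there are two current factors $m'_a,m'_b$ whose pooled $2d$ exponents dominate $v_{j_1}$ on at least the coordinates where the current split is deficient, so that an EGZ-style extraction can be forced to return exactly $v_{j_1}$ — or, if not exactly, to move strictly closer; here one may need to iterate the move on a window of three factors ($3d$ pooled exponents) to gain enough slack, which is precisely why cubics, not just quadrics, appear in the bound. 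Once a potential-function argument is in place, descending induction on $k$ (base cases $k=2,3$ being the definition of $\langle\mathcal{W}_{d,2},\mathcal{W}_{d,3}\rangle$) closes the proof. I would also double-check two bookkeeping points: that $n<d$ is used exactly to keep the pooled exponent entries in the EGZ range, and that the sequence-closure criterion of Proposition \ref{Proposition: Criterion sequence} applies since each intermediate monomial is a genuine product $w_{l_1}\cdots w_{l_k}$ with $\rho(w_{l_1}\cdots w_{l_k})=X^{c}$.
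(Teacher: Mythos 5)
Your overall skeleton is the same as the paper's: reduce to showing that every non-trivial suitable $k$-binomial with $k\ge 4$ admits an $\I(X_{d})_{k}$-sequence (Proposition \ref{Proposition: Criterion sequence}), and use Lemma \ref{Lemma: EGZ} to re-factor products of invariant monomials into degree-$d$ invariant pieces. But the heart of your argument --- forcing the extraction to land on, or move measurably closer to, $m_{j_1}$ --- is exactly the part you leave unproved, and it does not go through as described. EGZ is purely existential: it hands you \emph{some} zero-sum subsequence of length $d$, with no control over which one. Pooling only two factors can leave you completely stuck: if, say, $\alpha_0=0$, $m_{i_1}=x_0^{d}$ and $m_{i_2}=x_1^{d}$ with $\GCD(\alpha_1,d)=1$, then the only zero-sum $d$-subsets of the pooled $2d$ exponents are the two original factors, so the two-factor ``re-bracketing move'' produces nothing new. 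Your proposed fixes (an optimal-matching $\ell_1$ potential, or widening to three-factor windows) are not carried out, and your explanation of why cubics appear (needing $3d$ pooled exponents for slack) is not the actual mechanism. There is also a small confusion in the EGZ application: the sequence fed to the lemma consists of the values $\alpha_r\in\{0,\dots,d-1\}$ repeated with the appropriate multiplicities, not the indices $r\in\{0,\dots,n\}$; the hypothesis $n<d$ plays no role in keeping entries in range.

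The paper's proof shows that no iteration or potential function is needed: the sequence always has length four. Given $w^{\alpha_+}=w_{i_1}\cdots w_{i_k}$ and $w^{\alpha_-}=w_{j_1}\cdots w_{j_k}$, one forms the ``deficiency'' monomial $m=\prod_s x_s^{\max(0,\,b_s^1-a_s^1)}$ recording what $m_{i_1}$ lacks to be divisible by $m_{j_1}$; its degree is strictly less than $d$, and it divides $m_{i_2}\cdots m_{i_k}$. One then applies EGZ not to two pooled factors but to the complement $m'=(m_{i_2}\cdots m_{i_k})/m$, which has degree at least $(k-2)d\ge 2d$ precisely when $k\ge 4$ (this is where the bound $k\ge 4$, hence the cubics, really comes from). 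This peels off a degree-$d$ invariant $m_{i_k}^{1}$ disjoint from $m$; re-factoring the rest into degree-$d$ invariants (Theorem \ref{Theorem: Previos results CMM-R}(i)) gives a trivial move to $w^{2}$, after which $m$ still divides $m_{i_2}^{1}\cdots m_{i_{k-1}}^{1}$, so $m_{j_1}$ divides $m_{i_1}m_{i_2}^{1}\cdots m_{i_{k-1}}^{1}$ and can be split off exactly in one further trivial move $w^{3}$, landing on $w^{\alpha_-}$. You have the right tools in hand, but as written your proof has a genuine gap at the convergence step, and closing it requires replacing the ``steering'' heuristic by a construction of this kind.
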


\begin{proof} First, we prove that for all $k \geq 4$, any non trivial suitable $k-$binomial admits an $\I(X_{d})_{k}-$sequence. By Proposition \ref{Proposition: Criterion sequence}, this implies $\langle \Wk \rangle \subset \langle \mathcal{W}_{d,k-1}\rangle$. Fix $k \geq 4$ and let $w^{\alpha} = w^{\alpha_{+}}-w^{\alpha_{-}} = w_{i_{1}}\cdots w_{i_{k}} - w_{j_{1}}\cdots w_{j_{k}}$ be a non trivial suitable $k-$binomial.  For each $w_{i_{l}}$ (respectively $w_{j_{l}}$), let $m_{i_{l}} = x_{0}^{a_{0}^{l}}\cdots x_{n}^{a_{n}^{l}}$ be its associated monomial (respectively $m_{j_{l}} = x_{0}^{b_{0}^{l}}\cdots x_{n}^{b_{n}^{l}}$), $l = 1,\hdots,k$. We have that 
\begin{equation}\label{Equation:1} 
\sum_{l=1}^{k} a_{s}^{l} = \sum_{l=1}^{k}
b_{s}^{l}, \quad  0 \leq s \leq n.
\end{equation} 
We consider the monomials $m_{i_1}, m_{j_1}$ and for each $0 \leq s \leq n$ we define:
\[c_{s} = \left\{\begin{array}{lll}
0 & \quad &\text{if} \;\; a_{s}^{1} > b_{s}^{1}\\
b_{s}^{1}-a_{s}^{1} & \quad & \text{otherwise.} 
\end{array}\right.\]
This gives rise to a non-zero monomial $m = x_{0}^{c_{0}}\cdots x_{n}^{c_{n}} \in R$ of degree strictly smaller than $d$. Clearly, $m$ divides $m_{i_{2}}\cdots m_{i_{k}}$ (see (\ref{Equation:1})). Thus we consider $m' = (m_{i_{2}}\cdots m_{i_{k}})/m$, which is a monomial of degree at least $(k-2)d \geq 2d$. 
We write $m' = x_{0}^{f_{0}}\cdots x_{n}^{f_{n}}$. We define the sequence of integers $L = (\alpha_0, \overset{f_0}{\hdots} ,\alpha_{0},\hdots, \alpha_{n}, \overset{f_{n}}{\hdots},\alpha_{n})$. Since $L$ has length at least $(k-2)d \geq 2d$, by Lemma \ref{Lemma: EGZ}, we can find a subsequence $L' 
= (\alpha_0 ,\overset{g_0}{\hdots}, \alpha_{0},\hdots, \alpha_{n}, \overset{g_{n}}{\hdots},\alpha_{n}) \subset L$ of $d$ elements whose sum is a multiple of $d$, which implies that the monomial $x_{0}^{g_{0}}\cdots x_{n}^{g_{n}} \in R^{\overline{\Gamma}}$. By Theorem \ref{Theorem: Previos results CMM-R}(i), we can decompose 
\[m_{i_{2}}\cdots m_{i_{k}} = m_{i_{2}}^{1} \cdots m_{i_{k}}^{1},\]
where all $m_{i_{l}}^{1} \in R^{\overline{\Gamma}}$, $2 \leq l \leq k$, are monomials of degree $d$ and in particular:
\[m_{i_{k}}^{1} = x_{0}^{g_{0}}\cdots x_{n}^{g_{n}}.\]
Notice that we have $m_{i_{1}}\cdots m_{i_{k}} = m_{i_{1}}m_{i_{2}}^{1} \cdots m_{i_{k}}^{1}$. 
We define $w^{2} \in S$ to be the monomial $\rho^{-1}(m_{i_{1}})\rho^{-1}(m_{i_{2}}^{1}) \cdots \rho^{-1}(m_{i_{k}}^{1})$. By construction, $w^{\alpha_{+}}-w^2$ is a  trivial suitable $k-$binomial.
Observe that $m = x_{0}^{c_{0}}\cdots x_{n}^{c_{n}}$  divides $m_{i_{2}}^{1}\cdots m_{i_{k-1}}^{1}$, thus $m_{j_{1}}$ divides $m_{i_{1}}m_{i_{2}}^{1}\cdots m_{i_{k-1}}^{1}$. Applying the same argument as before, we factorize
\[m_{i_{1}}m_{i_{2}}^{1}\cdots m_{i_{k-1}}^{1} = m_{i_{1}}^{2}m_{i_{2}}^{2}\cdots m_{i_{k-1}}^{2},\]
where $m_{i_{1}}^{2} = m_{j_{1}}$ and all $m_{i_{l}}^{2} \in R^{\overline{\Gamma}}$, $2 \leq l \leq k-1$, are monomials of degree $d$. We set $w^{3} = \rho^{-1}(m_{i_{1}}^{2})\cdots \rho^{-1}(m_{i_{k-1}}^{2})\cdot \rho^{-1}(m_{i_{k}}^{1})$. Since $m_{i_{1}}m_{i_{2}}^{1}\cdots m_{i_{k-1}}^{1}m_{i_{k}}^{1} = m_{i_{1}}^{2}m_{i_{2}}^{2}\cdots m_{i_{k-1}}^{2}m_{i_{k}}^{1}$, 
$w^{2}-w^{3}$ is a suitable trivial $k-$binomial. Furthermore, since 
$m_{i_{1}}^{2} = m_{j_{1}}$, also $w^{3}-w^{\alpha_{-}}$ is a trivial suitable $k-$binomial. Therefore $(w_{i_{1}}\cdots w_{i_{n}},w^{2},w^{3},w_{j_{1}}\cdots w_{j_{n}})$ is an $\I(X_{d})_{k}-$sequence, from which it follows that $\langle \Wk \rangle \subset \langle \mathcal{W}_{d,k-1} \rangle$. 
The argument we have developed only requires that $(k-2)d \geq 2d$, which it is satisfied for all $k \geq 4$. Thus we have proved that for all $k \geq 4$, 
\[\langle \Wk \rangle \subset \langle \mathcal{W}_{d,k-1} \rangle \subset \cdots \subset  \langle \mathcal{W}_{d,3} \rangle,\]
which completes the proof.
\end{proof}

Despite $\mathcal{W}_{d,2}$ always belongs to a minimal set of homogeneous binomial generators of $\I(X_{d})$, it is not the case for $\mathcal{W}_{d,3}$. Even further, this fact depends on the action of the group $\Gamma = \langle M_{d;\alpha_{0},\hdots, \alpha_{n}} \rangle$, as we illustrate in the following examples. 

\begin{example} \label{Example: Cubics in the ideal} \rm  (i) Fix integers $2 = n < d$ and fix $\Gamma = \langle M_{d;0,\alpha_{1},\alpha_{2}} \rangle \subset \GL(3,\kk)$, a finite cyclic group of order $d$ with $\alpha_1 < \alpha_{2}$. 
By \cite[Corollary 4.16]{CMM-R}, the homogeneous ideal $\I(X_d)$ of the $GT-$surface $X_d$ is minimally generated by binomials of degree $2$ and $3$ if $\GCD(\alpha_{1},d) = \GCD(\alpha_{2},d) = \GCD(\alpha_{2}-1,d) = 1$ and $\I(X_d)$ is minimally generated by  binomials of degree $2$ otherwise.

(ii) Fix integers $3 = n < d$ and fix $\Gamma = \langle M_{d;0,1,2,3} \rangle \subset \GL(4,\kk)$. In \cite[Corollary 5.7]{CM-R}, it is determined a minimal set of homogeneous binomials generators of the homogeneous ideal $\I(X_{d})$. Precisely,  it is proved that $\I(X_{d})$ is minimally generated by binomials of degree $2$ if $d$ is even and $\I(X_{d})$ is minimally generated by binomials of degree $2$ and $3$ if $d$ is odd. 

(iii) Fix integers $2 \leq n < d$ and fix  $\Gamma = \langle M_{d;0,\alpha_1, \hdots, \alpha_n} \rangle \subset \GL(n+1,\kk)$. If there exist integers $0 < \alpha_i < \alpha_j$ such that $\GCD(\alpha_{i},d) = \GCD(\alpha_j,d) = \GCD(\alpha_j-1,d) = 1$, then $\I(X_d)$ is minimally generated by binomials of degree $2$ and $3$. Indeed, let $V_{d}$ be the $GT-$surface associated to the action of the cyclic group $\Lambda := \langle M_{d;0,\alpha_i,\alpha_j} \rangle \subset \GL(3,\kk)$ of order $d$ acting on $\kk[x_0,x_i,x_j]$. Under a suitable identification of variables, we have that the homogeneous ideal $\I(V_{d}) = \I(X_{d}) \cap \kk[\rho^{-1}(x_0),\rho^{-1}(x_i), \rho^{-1}(x_j)]$. Therefore, if a minimal set of homogeneous binomial generators of $\I(V_{d})$ contains a binomial of degree $3$, the same holds for $\I(X_{d})$. 
\end{example}

\section{The canonical module of GT--varieties}
\label{Section: canonical module}

The algebraic structure of the canonical module $\omega_{X}$ of the homogeneous coordinate ring of an aCM projective variety $X$ plays a central role in its geometry (see \cite{Bruns-Herzog, Chardin, Kreuzer}). For example, it can lead us to derive information on the Hilbert function and series, as well as on the Castelnuovo-Mumford regularity, of the homogeneous coordinate ring of $X$. Let $\Gamma = \langle M_{d;\alpha_0,\hdots,\alpha_{n}} \rangle \subset \GL(n+1)$ be a cyclic group of order $d$ (see Notation \ref{Notation: group cyclic}). We denote by $I_d = (m_1,\hdots,m_{\mu_d}) \subset R$ the ideal generated by all monomial invariants of $\Gamma$ of degree $d$, $\varphi_{I_{d}}: \PP^{n} \to \PP^{\mu_d-1}$ the morphism induced by $I_d$ and $X_d = \varphi_{I_{d}}(\PP^{n}) \subset \PP^{\mu_d-1}$ its image. $X_d$ is an aCM projective variety and its homogeneous coordinate ring $S/\I(X_d)$ is isomorphic to $R^{\overline{\Gamma}}$ (see Theorem \ref{Theorem: Previos results CMM-R}). In this section, we deal with the canonical module $\omega_{X_{d}}$ of $S/\I(X_d)$. We identify $\omega_{X_d}$ with an ideal of $R^{\overline{\Gamma}}$ and we prove that it is generated by monomials of degree $d$ and $2d$. We focus on varieties $X_d$ the homogeneous coordinate rings $S/\I(X_d)$ of which are level ring, i.e. their canonical modules $\omega_{X_{d}}$ are generated in only one degree. Afterwards, we characterize the Castelnuovo-Mumford regularity of $R^{\overline{\Gamma}}$.

Let $m = x_0^{a_0}\cdots x_n^{a_n} \in R$ be a monomial, we say that $l_{m}:= (a_0,\hdots, a_n) \in \ZZ_{\geq 0}^{n+1}$ is {\em the lattice point associated to $m$}. Conversely, given a lattice point $l = (a_0,\hdots, a_n) \in \ZZ_{\geq 0}^{n+1}$, we say that $m_{l} := x_0^{a_0}\cdots x_n^{a_n}$ is {\em the monomial associated to $l$}. We denote $H_{d} \subset \ZZ_{\geq 0}^{n+1}$ the semigroup generated by the lattice points $l_{m_{1}}, \hdots, l_{m_{\mu_{d}}}$ associated to $\cM_{d}:=\{m_{1},\hdots, m_{\mu_{d}}\}$. By Theorem \ref{Theorem: Previos results CMM-R}(i), we have that $R^{\overline{\Gamma}} = \kk[H_{d}]$ and the semigroup $H_{d}$ coincides with the set of all solutions $(a_{0},\hdots,a_{n}) \in \ZZ_{\geq 0}^{n+1}$ of the systems:
\[(*)_{t,r} = \left\{\begin{array}{lclclclcl}
y_0 & + & y_{1} & + & \cdots &+& y_{n} &=& td\\
\alpha_0y_0 & + & \alpha_1y_{1} & + & \cdots &+& \alpha_ny_{n} &=& rd\\
\end{array}\right. \quad t \geq 1, \;\; r = 0,\hdots,\alpha_{n}t.\]
Therefore $H_{d}$ is a normal semigroup (see Section \ref{Section: preliminaries}) with $H_{d}^{+} := \{(a_0,\hdots,a_{n}) \in H_{d} \;\mid\; a_{i} > 0, \;i=0,\hdots,n\} \neq \emptyset$, for instance $(d,\hdots,d) \in H_{d}^{+}$. It holds that $H_{d}^{+}$ coincides with the relative interior of $H_{d}$. We set $\relint(I_{d}):= (x_0^{a_0}\cdots x_n^{a_n} \in R^{\overline{\Gamma}} \,\mid\, 0 \neq a_0\cdots a_n)$ the ideal
of $R^{\overline{\Gamma}}$ generated by all monomials associated to $H_{d}^{+}$. We have the following. 

\begin{proposition}\label{Proposition: Canonical module} $\relint(I_{d})$ is the canonical module of $R^{\overline{\Gamma}}$.
\end{proposition}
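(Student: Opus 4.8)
The plan is to invoke the standard description of the canonical module of a normal affine semigroup ring due to Hochster, Stanley, and Danilov, and then verify its hypotheses in our situation. First I would recall the key fact: if $H \subset \ZZ_{\geq 0}^{n+1}$ is a normal semigroup and $C = \RR_{\geq 0} H$ is the rational cone it generates, then $\kk[H]$ is Cohen--Macaulay (which we already know for $H_d$, see Section \ref{Section: preliminaries}), and its canonical module $\omega_{\kk[H]}$ is the ideal of $\kk[H]$ spanned as a $\kk$-vector space by the monomials $X^h$ with $h \in \relint(C) \cap L(H)$, where $\relint(C)$ denotes the topological interior of the cone and $L(H)$ the group generated by $H$. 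This is \cite[Theorem 6.3.5]{Bruns-Herzog} (or the original statements of Stanley and Danilov). So the heart of the argument is identifying $\relint(C) \cap L(H)$ with the set $H_d^+$ of lattice points of $H_d$ all of whose coordinates are strictly positive, i.e. with the generating set of $\relint(I_d)$.

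The key step, then, is the following claim: $H_d^+ = \relint(C) \cap L(H_d)$, where $C = \RR_{\geq 0} H_d$. For the inclusion $\subseteq$, I would argue that the cone $C$ is cut out inside the linear span $V := \langle H_d \rangle_{\RR}$ by the facet inequalities $y_i \geq 0$, $i = 0, \dots, n$ — this uses that $H_d$ contains, for each $i$, the lattice point of a pure power $x_i^{?}$ only when $\alpha_i$ allows it, so more carefully one checks that the supporting hyperplanes of $C$ are exactly (a subset of) the coordinate hyperplanes, because every defining equation $(*)_{t,r}$ is an equality constraint (lying in the description of $V$) and the only inequalities are $y_i \geq 0$. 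Hence a point of $C$ lies in $\relint(C)$ precisely when none of the facet inequalities that are not identically zero on $V$ is tight; since $(d,\dots,d) \in H_d^+$ shows every coordinate function is positive somewhere on $C$, $\relint(C) = \{y \in C : y_i > 0 \text{ for all } i\}$. Combined with normality of $H_d$ (which gives $C \cap L(H_d) = H_d$), this yields $\relint(C) \cap L(H_d) = \{h \in H_d : h_i > 0 \,\forall i\} = H_d^+$. For the inclusion $\supseteq$ one just reads off that any $h \in H_d^+$ has all coordinates positive and lies in $H_d \subset C \cap L(H_d)$, hence in $\relint(C)$.

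I expect the main obstacle to be the careful bookkeeping in the previous paragraph: verifying that the irredundant facets of the cone $C$ are precisely the (nontrivial restrictions of the) coordinate hyperplanes $\{y_i = 0\}$, rather than some spurious additional facet coming from the affine-linear relations $(*)_{t,r}$. The clean way to handle this is to observe that $C$ is, by construction, the image of the positive orthant $\RR_{\geq 0}^{n+1}$ (or rather of the Veronese cone) under an affine-linear projection, so its faces correspond to faces of the orthant, and the facets of $C$ are images of the facets $\{y_i = 0\}$ of the orthant that remain facets. Once this is in place, the identification $\relint(I_d) = \omega_{R^{\overline{\Gamma}}}$ follows by matching $\kk$-bases: both are the ideal of $R^{\overline{\Gamma}} = \kk[H_d]$ generated by $\{X^h : h \in H_d^+\}$. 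Finally, since $S/\I(X_d) \cong R^{\overline{\Gamma}}$ as graded rings by Theorem \ref{Theorem: Previos results CMM-R}(ii) and the canonical module is functorial under graded isomorphism, we conclude $\omega_{X_d} \cong \relint(I_d)$, which is the assertion of the proposition.
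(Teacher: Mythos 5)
Your proposal is correct and follows essentially the same route as the paper, whose proof consists precisely of the citation to \cite[Theorem 6.3.5(b)]{Bruns-Herzog}; the identification of $\relint(C)\cap L(H_d)$ with $H_d^+$ that you work out is exactly the fact the paper asserts without proof in the paragraph preceding the proposition. The facet bookkeeping you worry about is simpler than you suggest: since $x_i^d\in R^{\overline{\Gamma}}$ for every $i$ (as $d\alpha_i\equiv 0 \pmod d$), the cone $\RR_{\geq 0}H_d$ contains every coordinate ray and is contained in the orthant, hence equals $\RR_{\geq 0}^{n+1}$, so its interior is cut out by $y_i>0$ for all $i$.
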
 
\begin{proof} See \cite[Theorem 6.3.5(b)]{Bruns-Herzog}
\end{proof}
For a complete exposition of the canonical module of normal semigroup rings we refer the reader to \cite{Bruns-Herzog}.

We denote by $\mathcal{C}_{d,k} \subset \relint(I_{d})$ the set of all monomials of degree $kd$. We have:

\begin{theorem} \label{Theorem: Canonical Module Gt-variety}  For any cyclic group $\Gamma = \langle M_{d;\alpha_0,\hdots,\alpha_n} \rangle \subset \GL(n+1,\kk)$ of order $d$, $\relint(I_{d}) = \langle \mathcal{C}_{d,1}, \mathcal{C}_{d,2} \rangle$. 
\end{theorem}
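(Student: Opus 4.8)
The plan is to mimic the strategy used for the homogeneous ideal in Theorem~\ref{Theorem: main theorem generators ideal}: reduce generators of degree $kd$ with $k \geq 3$ down to degree $d$ and $2d$ using an Eriksson--Ginzburg--Ziv type argument. Concretely, fix $k \geq 3$ and a monomial $M = x_0^{a_0}\cdots x_n^{a_n} \in \mathcal{C}_{d,k}$, so $\deg M = kd$ and $a_i > 0$ for all $i$. I want to show that $M = M' M''$ with $M' \in R^{\overline{\Gamma}}$ a monomial of degree $d$ that still has \emph{all} variables in its support (hence $M' \in \mathcal{C}_{d,1}$), and $M'' \in \relint(I_d)$ of degree $(k-1)d$; then induction on $k$ finishes the job, since the base cases $k=1,2$ are exactly $\mathcal{C}_{d,1}$ and $\mathcal{C}_{d,2}$.

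First I would record the combinatorial reformulation: by Theorem~\ref{Theorem: Previos results CMM-R}(i) and the description of $H_d$ as the solution set of the systems $(*)_{t,r}$, the monomial $M$ of degree $kd$ lies in $R^{\overline{\Gamma}}$ iff the exponent vector $(a_0,\dots,a_n)$ satisfies $\sum a_i = kd$ and $\sum \alpha_i a_i \equiv 0 \pmod d$ (the congruence is what matters; the second equation of $(*)_{t,r}$ just fixes $r$). So splitting off a degree-$d$ invariant monomial factor amounts to choosing a sub-multiset of $d$ of the exponents, written out with multiplicity, whose weighted sum (weight $\alpha_i$ on each copy of the $i$-th exponent) is $\equiv 0 \pmod d$. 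I would form the sequence $L = (\alpha_0,\overset{a_0}{\dots},\alpha_0,\dots,\alpha_n,\overset{a_n}{\dots},\alpha_n)$ of length $kd$. To keep all variables in the leftover factor $M''$, I first \emph{reserve} one copy of each $\alpha_i$ (so that $M''$ still contains every $x_i$) and then apply Lemma~\ref{Lemma: EGZ} to the remaining sequence of length $kd - (n+1) \geq 3d - (n+1) \geq 2d - 1$ (using $n < d$), extracting a sub-sequence of $d$ elements summing to $0 \bmod d$; this yields the desired factor $M' \in \mathcal{C}_{d,1}$, and $M'' := M/M' \in R^{\overline{\Gamma}}$ has degree $(k-1)d$ and full support, i.e. $M'' \in \relint(I_d)$. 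Since $\relint(I_d)$ is an ideal of $R^{\overline{\Gamma}}$, this shows $M \in \langle \mathcal{C}_{d,1}\rangle \cdot \relint(I_d)$, and inducting down on $k$ gives $\relint(I_d) = \langle \mathcal{C}_{d,1},\mathcal{C}_{d,2}\rangle$.

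The main obstacle is the bookkeeping needed to guarantee that $M''$ retains \emph{every} variable, i.e.\ that after removing the degree-$d$ factor the leftover monomial is still in the relative interior rather than merely in $R^{\overline{\Gamma}}$. The reservation trick above handles this only if $kd - (n+1) \geq 2d - 1$, i.e.\ $d(k-2) \geq n - 1$; for $k \geq 3$ this reads $d \geq n-1$, which holds since $n < d$. One still has to be slightly careful in the boundary regime (small $d$ relative to $n$, or $k=3$) that the reserved copies together with the extracted $d$-element block do not exhaust a variable's entire multiplicity in a way that forces $a_i' = a_i$ for the block — but since we reserved a copy \emph{before} invoking Lemma~\ref{Lemma: EGZ}, the extracted block is drawn from the \emph{reduced} sequence and cannot consume the reserved copy, so $M''$ keeps $x_i$ for every $i$. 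A clean way to present this is to prove the single reduction step as a lemma (``every monomial of $\relint(I_d)$ of degree $kd$, $k\geq 3$, is divisible by a monomial of $\mathcal{C}_{d,1}$ with full-support complement'') and then conclude by descending induction; this parallels the structure of the proof of Theorem~\ref{Theorem: main theorem generators ideal} almost verbatim, with $\mathcal{C}_{d,k}$ playing the role of $\Wk$ and divisibility of monomials playing the role of $\I(X_d)_k$-sequences.
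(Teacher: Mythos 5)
Your proposal is essentially the paper's own proof. The paper likewise sets $m_1 = m/(x_0\cdots x_n)$ (your ``reservation'' of one copy of each variable), notes that $\deg m_1 = kd-(n+1)\geq 2d$ since $n+1\leq d$ and $k\geq 3$, applies the Erd\H{o}s--Ginzburg--Ziv lemma to extract a degree-$d$ invariant monomial factor $m_2$ dividing $m_1$, and concludes that the complement $m/m_2$ lies in $\mathcal{C}_{d,k-1}$, giving $\langle\mathcal{C}_{d,k}\rangle\subset\langle\mathcal{C}_{d,k-1}\rangle$ and the result by descending induction. One assertion in your write-up is wrong, though harmless: the extracted degree-$d$ factor need \emph{not} lie in $\mathcal{C}_{d,1}$, since the EGZ subsequence may miss some variables entirely (it could, for instance, consist of $d$ copies of $\alpha_0$ when $a_0-1\geq d$); and if it always did, your argument would show $\langle\mathcal{C}_{d,k}\rangle\subset\langle\mathcal{C}_{d,1}\rangle$ for $k\geq 3$ with no induction at all. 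What the reservation trick actually guarantees --- and all that is needed --- is that the \emph{complement} retains full support; the extracted factor is merely a degree-$d$ monomial of $R^{\overline{\Gamma}}$, and the descending induction on $k$, with divisibility in $R^{\overline{\Gamma}}$ playing the role you describe, does the rest exactly as in the paper.
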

\begin{proof} It is enough to show that for any monomial $m \in \mathcal{C}_{d,k}$, $k \geq 3$, there exists a monomial $m' \in \mathcal{C}_{d,k-1}$ which divides $m$. This proves that for $k \geq 3$, $\langle \mathcal{C}_{d,k} \rangle \subset \langle \mathcal{C}_{d,k-1} \rangle \subset \cdots \subset \langle \mathcal{C}_{d,2} \rangle$. 

We fix an integer $k \geq 3$, a monomial $m = x_{0}^{a_{0}}\!\cdots\! x_{n}^{a_{n}} \in \mathcal{C}_{d,k}$ and we set $m_{1} = m/(x_{0}\cdots x_{n}) = x_{0}^{a_0-1}\cdots x_{n}^{a_n-1}$. Since $d \geq n+1$ and $k \geq 3$, $m'$ is a monomial of degree $kd-(n+1) \geq 2d$. We define the sequence of integers $L = (\alpha_0, \overset{a_0-1}{\hdots}, \alpha_{0}, \hdots, \alpha_n, \overset{a_n-1}{\hdots}, \alpha_{n})$, by \cite[Theorem]{EGZ}  there exists a subsequence $L' = (\alpha_0, \overset{b_0}{\hdots}, \alpha_{0}, \hdots, \alpha_n, \overset{b_n}{\hdots}, \alpha_{n}) \subset L$ of $d$ integers the sum of which is a multiple of $d$. Therefore, $L'$ gives rise a monomial $m_{2} := x_{0}^{b_0}\cdots x_{n}^{b_n} \in R^{\overline{\Gamma}}$ of degree $d$ which divides $m_{1}$. Hence, we can factorize $m = m_{2}m'$ and by construction $m' \in \mathcal{C}_{d,k-1}$ is the required monomial. 
\end{proof}

\begin{example}\label{Example: Continuacion homo ideal and canonical module} \rm Let $\Gamma = \langle M_{6;0,1,2,3} \rangle \subset \GL(4,\kk)$ be a cyclic group of order $6$. We have that
$\mathcal{C}_{6,1} = \{x_0^3x_1x_2x_3, x_0x_1x_2x_3^3\}$  (see Example \ref{Example: 3fold homo ideal and canonical module}) and 

\[\begin{array}{lll}
\mathcal{C}_{6,2} & = & \{x_0^9 x_1 x_2x_3 , x_0^7 x_1 x_2x_3^3, x_0^6 x_1^2 x_2^2x_3^2, x_0^6 x_1 x_2^4x_3, x_0^5 x_1^4 x_2x_3^2, x_0^5 x_1^3 x_2^3x_3,x_0^5 x_1 x_2x_3^5, \\
& & x_0^4 x_1^5 x_2^2x_3 ,x_0^4 x_1^2 x_2^2x_3^4,x_0^4 x_1 x_2^4x_3^3, x_0^3 x_1^7 x_2x_3, x_0^3 x_1^4 x_2x_3^4 , x_0^3 x_1^3 x_2^3x_3^3,x_0^3 x_1^2 x_2^5x_3^2,  \\
& & x_0^3 x_1 x_2^7x_3, x_0^3 x_1 x_2x_3^7 ,x_0^2 x_1^5 x_2^2x_3^3, x_0^2 x_1^4 x_2^4x_3^2,x_0^2 x_1^3 x_2^6x_3, x_0^2 x_1^2 x_2^2x_3^6 ,x_0^2 x_1 x_2^4x_3^5,\\
& & x_0 x_1^7 x_2x_3^3, x_0 x_1^6 x_2^3x_3^2, x_0 x_1^5 x_2^5x_3, x_0 x_1^4 x_2x_3^6, x_0 x_1^3 x_2^3x_3^5,x_0 x_1^2 x_2^5x_3^4, x_0 x_1 x_2^7x_3^3 ,
\\
& & x_0 x_1 x_2 x_3^9 \}.
\end{array}\]
Only the following four monomials $x_0^2x_1^4x_2^4x_3^2, x_0^2x_1^3x_2^6x_3, x_0x_1^6x_2^3x_3^2,x_0x_1^5x_2^5x_3
 \in \mathcal{C}_{6,2}$ do not belong to the ideal $\langle \mathcal{C}_{6,1} \rangle \subset R^{\overline{\Gamma}}$. From this observation and Theorem \ref{Theorem: Canonical Module Gt-variety}, we obtain that the canonical module of $R^{\overline{\Gamma}}$ is the ideal: 
\[\relint(I_{6}) = (x_0^3x_1x_2x_3, x_0^2x_1^4x_2^4x_3^2, x_0^2x_1^3x_2^6x_3, x_0x_1^6x_2^3x_3^2, x_0x_1^5x_2^5x_3, x_0x_1x_2x_3^3).\]
In \cite{CM-R} a minimal free resolution of $R^{\overline{\Gamma}}$ is computed. Set $S := \kk[w_1,\hdots,w_{16}]$ (see Example \ref{Example: 3fold homo ideal and canonical module}), precisely we have: 

\[0\! \to\! S(-14)^{4} \oplus S(-15)^{2} \to S(-13)^{108} \oplus S(-14)^{7} \to S(-12)^{803} \to S(-11)^{2850} \to
S(-10)^{6237}\! \to \]
\[S(-9)^{9064} \to S(-7)^{6} \oplus S(-8)^{8811} \to
S(-6)^{258} \oplus S(-7)^{5352} \to S(-5)^{844} \oplus S(-6)^{1638} \to\]
\[S(-4)^{796} \oplus S(-5)^{184}\to S(-3)^{322}\oplus S(-4)^{13} \to S(-2)^{57} \to S \to S/\I(X_{6}) \to 0,\]
verifying as well Theorem \ref{Theorem: Canonical Module Gt-variety}. 
\end{example}

We recall that $R^{\overline{\Gamma}}$ is a {\em level} ring if its canonical module $\relint(I_{d})$ is generated in only one degree and $R^{\overline{\Gamma}}$ is a {\em Gorenstein} ring if it is a level ring and $\relint(I_{d})$ is principal. As a consequence of Theorem \ref{Theorem: Canonical Module Gt-variety}, we have that $R^{\overline{\Gamma}}$ is a level ring if and only if $\relint(I_{d}) = \langle \mathcal{C}_{d,1} \rangle$ or $\mathcal{C}_{d,1} = \emptyset$. 
In \cite[Corollary 4.13(ii)]{CMM-R}, it is shown that the homogeneous coordinate ring of any $GT-$surface is level. However, the same assertion is not true for $GT-$varieties of higher dimensions, see for instance \cite{CM-R}. We investigate further this property, which will play an important role in the last section of this work. Let us first present an interesting family of examples of Gorenstein $GT-$varieties. 

\begin{proposition} \label{Corollary: Goreinstein GT-variety} Fix an even integer $2 \leq n$  and $\Gamma = \langle M_{n+1;0,1,2,\hdots,n} \rangle \subset \GL(n+1,\kk)$, a cyclic group of order $n+1$. Then $R^{\overline{\Gamma}}$ is a Gorenstein ring.  
\end{proposition}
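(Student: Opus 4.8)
The plan is to show that $R^{\overline{\Gamma}}$ is Gorenstein by proving that its canonical module $\relint(I_{n+1})$ is principal, which by Proposition \ref{Proposition: Canonical module} and Theorem \ref{Theorem: Canonical Module Gt-variety} reduces to showing that $\relint(I_{n+1}) = \langle \mathcal{C}_{n+1,1} \rangle$ with $\mathcal{C}_{n+1,1}$ a single monomial. Here $d = n+1$ and the group is generated by $M_{d;0,1,2,\hdots,n}$, so a monomial $x_0^{a_0}\cdots x_n^{a_n}$ lies in $R^{\overline{\Gamma}}$ precisely when $a_0 + \cdots + a_n = td$ and $0\cdot a_0 + 1\cdot a_1 + \cdots + n\cdot a_n \equiv 0 \pmod d$, i.e. it is a solution of the systems $(*)_{t,r}$. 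First I would identify $\mathcal{C}_{d,1}$: a degree-$d$ monomial in the relative interior has all $a_i \geq 1$ and $\sum a_i = d = n+1$, forcing $a_i = 1$ for all $i$, so $\mathcal{C}_{d,1} = \{x_0x_1\cdots x_n\}$, which is a single monomial and does belong to $R^{\overline{\Gamma}}$ since $\sum_{i=0}^{n} i = \binom{n+1}{2} = \frac{(n+1)n}{2}$, and because $n$ is even, $n/2$ is an integer so this is $\frac{n}{2}\cdot(n+1) \equiv 0 \pmod{d}$.

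The core step is then to prove $\relint(I_d) = \langle x_0x_1\cdots x_n \rangle$, equivalently that every monomial $m = x_0^{a_0}\cdots x_n^{a_n} \in \relint(I_d)$ is divisible by $x_0\cdots x_n$ and the quotient lies in $R^{\overline{\Gamma}}$. The first half is immediate since $m \in \relint(I_d)$ means every $a_i \geq 1$. For the second half I must show $m/(x_0\cdots x_n) = x_0^{a_0-1}\cdots x_n^{a_n-1} \in R^{\overline{\Gamma}}$: its degree is $kd - (n+1) = (k-1)d$ which is a nonnegative multiple of $d$, and its weighted degree is $\sum_i i(a_i - 1) = \sum_i i a_i - \binom{n+1}{2} \equiv 0 - \frac{n}{2}d \equiv 0 \pmod d$, using that $m$ itself satisfies $\sum i a_i \equiv 0$ and that $n$ is even. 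Hence $m/(x_0\cdots x_n)$ satisfies the congruence system defining $R^{\overline{\Gamma}}$, so it lies in $R^{\overline{\Gamma}}$, proving $\relint(I_d) \subseteq \langle x_0\cdots x_n \rangle$; the reverse inclusion is clear since $x_0\cdots x_n \in \relint(I_d)$. Therefore $\relint(I_d)$ is principal, so $R^{\overline{\Gamma}}$ is level with $\mathcal{C}_{d,1}$ a single generator, hence Gorenstein.

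The only place where the hypothesis that $n$ is even is used is in checking the parity of $\binom{n+1}{2} = \frac{n(n+1)}{2}$ modulo $d = n+1$: we need $\frac{n}{2}$ to be an integer so that $\binom{n+1}{2} \equiv 0 \pmod{n+1}$, which is exactly what makes $x_0\cdots x_n$ an invariant and makes the division argument work. The main (minor) obstacle is simply being careful that $x_0\cdots x_n$ really is in $R^{\overline{\Gamma}}$ — i.e. that the weighted-degree congruence is satisfied — and that it is the \emph{only} degree-$d$ interior monomial; both follow at once from $d = n+1$, which is the special feature of this family. No deeper machinery is needed: Theorem \ref{Theorem: Canonical Module Gt-variety} already guarantees generation in degrees $d$ and $2d$, so once $\mathcal{C}_{d,2} \subset \langle \mathcal{C}_{d,1}\rangle$ is verified (which is exactly the divisibility statement above), levelness and then Gorensteinness follow formally.
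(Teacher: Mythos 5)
Your proposal is correct and follows essentially the same route as the paper: both identify $x_0\cdots x_n$ as an invariant (using that $\tfrac{n(n+1)}{2}\equiv 0 \pmod{n+1}$ precisely because $n$ is even) and conclude that $\relint(I_{n+1})=(x_0\cdots x_n)$ is principal, hence $R^{\overline{\Gamma}}$ is Gorenstein. Your explicit check that the quotient $m/(x_0\cdots x_n)$ again satisfies the defining congruences, so that divisibility holds inside $R^{\overline{\Gamma}}$, is a detail the paper leaves as ``straightforward,'' but it is the same argument.
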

\begin{proof} Notice that $m = x_{0}\cdots x_{n} \in R^{\overline{\Gamma}}$. Indeed $m$ is of degree $n+1$ and it is satisfied that $1+2+\cdots+n = \frac{n(n+1)}{2}$, which is a multiple of $n+1$ since $n$ is even. It is straightforward to see that $m$ divides any monomial of $\relint(I_{n+1})$. Hence $\relint(I_{n+1}) = (x_{0}\cdots x_{n})$ and the proof is complete. 
\end{proof}

\begin{proposition}\label{Proposition: level from Gorenstein} Fix integers $2 \leq n < d$, $1 \leq k $ and assume that $n$ is even. Let $\Gamma = \langle M_{d;\alpha_0,\hdots, \alpha_n} \rangle \subset \GL(n+1,\kk)$ and $\Gamma_{k} = \langle M_{kd;\alpha_0,\hdots, \alpha_n} \rangle \subset \GL(n+1,\kk)$ be finite cyclic groups of order $d$ and $kd$, respectively. If $R^{\overline{\Gamma}}$ is a Gorenstein ring, then $R^{\overline{\Gamma_{k}}}$ is a level ring. 
\end{proposition}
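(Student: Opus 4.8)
Throughout the plan, attach to $\Gamma_{k}$ the objects that Section~\ref{Section: canonical module} attaches to $\Gamma$, with $kd$ in place of $d$ (legitimate since $2\leq n<d\leq kd$ and $\Gamma_{k}$ has order $kd$): the set $\cM_{kd}$ of monomial invariants of $\Gamma_{k}$ of degree $kd$, the ideal $I_{kd}$ they generate, the normal semigroup $H_{kd}$ with $R^{\overline{\Gamma_{k}}}=\kk[H_{kd}]$, the ideal $\relint(I_{kd})$, and the sets $\mathcal{C}_{kd,1},\mathcal{C}_{kd,2}$ of its monomials of degree $kd$ and $2kd$. If $k=1$ then $\Gamma_{k}=\Gamma$ and the assertion is the tautology that a Gorenstein ring is level, so assume $k\geq 2$. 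The first step is a reduction: by Proposition~\ref{Proposition: Canonical module} the canonical module of $R^{\overline{\Gamma_{k}}}$ is $\relint(I_{kd})$, and since $n<kd$, Theorem~\ref{Theorem: Canonical Module Gt-variety} gives $\relint(I_{kd})=\langle \mathcal{C}_{kd,1},\mathcal{C}_{kd,2}\rangle$. Hence $R^{\overline{\Gamma_{k}}}$ is level if and only if $\mathcal{C}_{kd,1}=\emptyset$ or $\relint(I_{kd})=\langle\mathcal{C}_{kd,1}\rangle$, so it suffices to prove that, whenever $\mathcal{C}_{kd,1}\neq\emptyset$, every monomial of $\mathcal{C}_{kd,2}$ is divisible in $R^{\overline{\Gamma_{k}}}$ by a monomial of $\mathcal{C}_{kd,1}$.

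Next I would use the Gorenstein hypothesis on $R^{\overline{\Gamma}}$: its canonical module is principal, $\relint(I_{d})=(m_{0})$ with $m_{0}=x_{0}^{c_{0}}\cdots x_{n}^{c_{n}}$ and all $c_{i}\geq 1$, and by Theorem~\ref{Theorem: Canonical Module Gt-variety} the degree of $m_{0}$ is $d$ or $2d$. The evenness of $n$ enters precisely here: I would show that it forces $\mathcal{C}_{d,1}\neq\emptyset$, equivalently $\deg m_{0}=d$, so that $m_{0}\in\cM_{d}$. For the groups $\Gamma=\langle M_{n+1;0,1,\dots,n}\rangle$ of Proposition~\ref{Corollary: Goreinstein GT-variety} this is immediate, since there $m_{0}=x_{0}\cdots x_{n}$ has degree $n+1=d$; in general I would argue it from the symmetry of the $h$-vector of the Gorenstein ring $R^{\overline{\Gamma}}$ together with the combinatorial description of $\relint(I_{d})$ in Section~\ref{Section: canonical module}. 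Granting $\deg m_{0}=d$, write $\sum_{i}\alpha_{i}c_{i}=r_{0}d$; then $m_{0}^{k}$ has degree $kd$, all exponents $\geq k\geq 1$, and $\sum_{i}\alpha_{i}(kc_{i})=kr_{0}d\equiv 0\pmod{kd}$, so $m_{0}^{k}\in\mathcal{C}_{kd,1}$. In particular $\mathcal{C}_{kd,1}\neq\emptyset$, and only the divisibility statement is left.

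The heart of the proof is then a peeling step in the style of the proofs of Theorems~\ref{Theorem: main theorem generators ideal} and~\ref{Theorem: Canonical Module Gt-variety}. Fix $m=x_{0}^{a_{0}}\cdots x_{n}^{a_{n}}\in\mathcal{C}_{kd,2}$: all $a_{i}\geq 1$, $\sum_{i}a_{i}=2kd$, and $\sum_{i}\alpha_{i}a_{i}=Rd$ with $R$ a multiple of $k$ (since $\sum_{i}\alpha_{i}a_{i}$ is a multiple of $kd$, by the defining equations of $H_{kd}$). As $H_{kd}\subseteq H_{d}$, we have $m\in\relint(I_{d})=(m_{0})$, hence $a_{i}\geq c_{i}$ for all $i$ and $m/m_{0}$ is a monomial of $R^{\overline{\Gamma}}$ of degree $(2k-1)d$; by Theorem~\ref{Theorem: Previos results CMM-R}(i), $R^{\overline{\Gamma}}=\kk[\cM_{d}]$, so $m/m_{0}=n_{1}\cdots n_{2k-1}$ with each $n_{j}\in\cM_{d}$. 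For each $j$ let $r_{j}$ be the integer with $\sum_{i}\alpha_{i}\cdot(\text{exponents of }n_{j})=r_{j}d$; then $r_{0}+r_{1}+\cdots+r_{2k-1}=R$. Applying Lemma~\ref{Lemma: EGZ} with modulus $k$ to the residues of $r_{1},\dots,r_{2k-1}$ modulo $k$ (there are $2k-1$ of them), we obtain $J\subseteq\{1,\dots,2k-1\}$ with $|J|=k$ and $\sum_{j\in J}r_{j}\equiv 0\pmod{k}$. Put $b:=m_{0}\cdot\prod_{j\notin J}n_{j}$. Then $\deg b=d+(k-1)d=kd$ and $\sum_{i}\alpha_{i}\cdot(\text{exponents of }b)=\big(r_{0}+\sum_{j\notin J}r_{j}\big)d=\big(R-\sum_{j\in J}r_{j}\big)d\equiv 0\pmod{kd}$, so $b\in R^{\overline{\Gamma_{k}}}$; since $m_{0}\mid b$, all exponents of $b$ are $\geq 1$, so $b\in\mathcal{C}_{kd,1}$; and $b\mid m$, with $m/b=\prod_{j\in J}n_{j}\in R^{\overline{\Gamma_{k}}}$. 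This is exactly the divisibility required, so $\relint(I_{kd})=\langle\mathcal{C}_{kd,1}\rangle$ is generated in the single degree $kd$ and $R^{\overline{\Gamma_{k}}}$ is level.

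The step I expect to be the real obstacle is the parity input in the second paragraph: that $n$ even forces the canonical generator $m_{0}$ of $R^{\overline{\Gamma}}$ to sit in degree $d$ rather than $2d$ (equivalently, $\mathcal{C}_{d,1}\neq\emptyset$). Everything after that is elementary bookkeeping around the Erd\H{o}s--Ginzburg--Ziv lemma (Lemma~\ref{Lemma: EGZ}): the peeling step only ever extracts $k$ of $2k-1$ degree-$d$ invariants with controlled congruence data, and the factorization $m=m_{0}\,n_{1}\cdots n_{2k-1}$ supplies exactly those $2k-1$ factors $n_{j}$ — which is where the hypothesis $\deg m_{0}=d$, hence the evenness of $n$, is used.
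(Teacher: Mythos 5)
Your third paragraph is, almost step for step, the paper's own proof: the paper likewise writes $\relint(I_{d})=\langle m\rangle$, takes $m'\in\mathcal{C}_{kd,2}$, notes that $m'$ is an invariant of $\Gamma$ so that $m$ divides it, factors $m'/m$ into $2k-1$ elements of $\cM_{d}$ via Theorem \ref{Theorem: Previos results CMM-R}(i), attaches to each degree-$d$ factor the residue $r$ with $\alpha_{0}a_{0}+\cdots+\alpha_{n}a_{n}=rd$, and applies Lemma \ref{Lemma: EGZ} modulo $k$ to split off a degree-$kd$ product lying in $R^{\overline{\Gamma_{k}}}$. If anything your bookkeeping is more careful than the paper's: the paper applies the lemma to all $2k$ residues (including that of $m$ itself) and takes $\overline{m}=m'/(m_{i_{1}}\cdots m_{i_{k}})$, which leaves open the possibility that $m$ is among the selected factors, in which case the complement need not have all exponents positive and hence need not lie in $\relint(I_{kd})$; you feed only the $2k-1$ residues of the factors other than $m_{0}$ to the lemma, so the complement $b$ visibly contains $m_{0}$ and lands in $\mathcal{C}_{kd,1}$.

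The step you flag as the obstacle --- that $n$ even plus $R^{\overline{\Gamma}}$ Gorenstein forces $\deg m_{0}=d$, i.e.\ $\mathcal{C}_{d,1}\neq\emptyset$ --- is indeed the one incomplete point of your proposal, and the route you sketch does not close it: Gorenstein symmetry of the $h$-vector gives $e_{i}=e_{s-i}$ where $s$ is the socle degree, and this is perfectly compatible with $s=n-1$, i.e.\ $e_{n}=0$ and (by Theorem \ref{Theorem: regularity}) $\mathcal{C}_{d,1}=\emptyset$; so symmetry alone does not yield $e_{n}=e_{0}=1$. You should know, however, that the paper's proof does not address this either: it declares $m_{1}=m'/m$ to be ``of degree $(2k-1)d$'', which silently assumes $\deg m=d$, and the parity of $n$ is never visibly invoked in its argument. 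So you have correctly isolated the only real gap, but neither your $h$-vector sketch nor the paper's text fills it; to finish along these lines one must either prove directly that a principal $\relint(I_{d})$ must be generated in degree $d$ under the standing hypotheses, or be content with the situations of Proposition \ref{Corollary: Goreinstein GT-variety} and Example \ref{Example: level GT-varieties}(iii), where the generator is exhibited explicitly in degree $d$ and the peeling argument then applies verbatim.
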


\begin{proof} We denote $I_{d} \subset R$ (respectively $I_{kd}\subset R$) the ideal generated by all monomials of degree $d$ (respectively $kd)$ which are invariants of $\Gamma$ (respectively $\Gamma_{k})$. We write $\relint(I_{d}) = \langle m \rangle$. We want to prove that any monomial $m' \in \mathcal{C}_{d,2kd}$ is divisible by a monomial $\bar{m} \in \mathcal{C}_{d,kd}$. We fix $m' = x_0^{a_0}\cdots x_{n}^{a_{n}} \in \mathcal{C}_{d,2kd}$. Notice that $m'$ is also an invariant of $\Gamma$, so $m' \in \mathcal{C}_{d,k}$ and by hypothesis $m$ divides $m'$. We define $m_{1} = \frac{m'}{m}$; since $m_{1} \in  R^{\Gamma}$ is a monomial of degree $(2k-1)d$, by Theorem \ref{Theorem: Previos results CMM-R}(i),  there are monomials $m_{2},\hdots, m_{2k} \in R^{\Gamma}$ of degree $d$ such that $m_{1} = m_{2}\cdots m_{2k}$, and hence $m' = mm_{2}\cdots m_{2k}$.
For each monomial $m_{i}$, $1 \leq i \leq 2k$, there is a unique integer $r_{i} \geq 0$ such that the lattice point $l_{m_{i}}$ is a solution of the system $(*)_{1,r_{i}}$ induced by $\overline{\Gamma}$. 
By Lemma \ref{Lemma: EGZ}, there is a subsequence $(r_{i_{1}},\hdots, r_{i_{k}}) \subset (r_{1},\hdots, r_{2k})$ of $k$ integers the sum of which is a multiple of $k$. Therefore, we obtain that $m_{i_{1}}\cdots m_{i_{k}} \in R^{\overline{\Gamma_{k}}}$ and $\overline{m} = m'/(m_{i_{1}}\cdots m_{i_{k}}) \in \relint(I_{kd})$ is the required monomial. 
\end{proof}

\begin{corollary}\label{Corollary: level M(123...n)} Fix integers $2 \leq n$, $1 \leq k$ with $n$ even and fix $\Gamma_{k} = \langle M_{k(n+1);0,1,2,\hdots,n} \rangle \subset \GL(n+1,\kk)$, a finite cyclic group of order $k(n+1)$. Then $R^{\overline{\Gamma_{k}}}$ is a level ring. 
\end{corollary}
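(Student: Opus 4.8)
The plan is to derive Corollary \ref{Corollary: level M(123...n)} directly by combining Proposition \ref{Corollary: Goreinstein GT-variety} with Proposition \ref{Proposition: level from Gorenstein}. The setup is tailor-made: fix an even integer $2 \leq n$, set $\Gamma = \langle M_{n+1;0,1,2,\hdots,n} \rangle$, and for $1 \leq k$ set $\Gamma_k = \langle M_{k(n+1);0,1,2,\hdots,n} \rangle$. The key observation is that $\Gamma_k$ is exactly the group obtained from $\Gamma$ by replacing the order $d = n+1$ with $kd = k(n+1)$ while keeping the exponent vector $(\alpha_0,\hdots,\alpha_n) = (0,1,2,\hdots,n)$ unchanged, so it is literally the group denoted $\Gamma_k$ in the hypothesis of Proposition \ref{Proposition: level from Gorenstein} (with $d = n+1$).

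First I would verify that the hypotheses of Proposition \ref{Proposition: level from Gorenstein} are met. We need $2 \leq n < d$ where $d = n+1$; this holds since $n < n+1$. We need $n$ even, which is assumed. We need $1 \leq k$, which is assumed. Finally, we need $R^{\overline{\Gamma}}$ to be a Gorenstein ring, and this is precisely the content of Proposition \ref{Corollary: Goreinstein GT-variety}, since $\Gamma = \langle M_{n+1;0,1,2,\hdots,n} \rangle$ is a cyclic group of order $n+1$ with $n$ even. (One subtlety: Proposition \ref{Proposition: level from Gorenstein} is stated for $2 \leq n < d$, and here $d = n+1$ is the smallest allowed value; everything still applies, in particular the decomposition results of Theorem \ref{Theorem: Previos results CMM-R}(i) and Lemma \ref{Lemma: EGZ} used in its proof do not require $d$ to be large relative to $n$.)

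Then the conclusion is immediate: Proposition \ref{Proposition: level from Gorenstein} asserts that if $R^{\overline{\Gamma}}$ is Gorenstein then $R^{\overline{\Gamma_k}}$ is a level ring, which is exactly the statement of the corollary. I do not anticipate any real obstacle here — the corollary is a direct specialization, and the only thing to be careful about is confirming that the notational conventions line up (the exponent vector $(0,1,\hdots,n)$ in $\Gamma_k$ matches the $(\alpha_0,\hdots,\alpha_n)$ of $\Gamma$, and the group order $k(n+1) = kd$ matches). The proof should be two or three sentences long, simply invoking the two preceding propositions.

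\begin{proof}
Set $d = n+1$ and $\Gamma = \langle M_{n+1;0,1,2,\hdots,n} \rangle \subset \GL(n+1,\kk)$, a cyclic group of order $n+1$. Since $n$ is even, Proposition \ref{Corollary: Goreinstein GT-variety} gives that $R^{\overline{\Gamma}}$ is a Gorenstein ring. Now $\Gamma_{k} = \langle M_{k(n+1);0,1,2,\hdots,n} \rangle = \langle M_{kd;\alpha_0,\hdots,\alpha_n} \rangle$ with $(\alpha_0,\hdots,\alpha_n) = (0,1,\hdots,n)$, so $\Gamma$ and $\Gamma_{k}$ are exactly the pair of groups appearing in Proposition \ref{Proposition: level from Gorenstein} (with $2 \leq n < d = n+1$ and $n$ even). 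Applying Proposition \ref{Proposition: level from Gorenstein}, since $R^{\overline{\Gamma}}$ is Gorenstein we conclude that $R^{\overline{\Gamma_{k}}}$ is a level ring.
\end{proof}
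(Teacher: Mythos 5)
Your proof is correct and is exactly the paper's argument: the paper also derives the corollary directly by combining Proposition \ref{Corollary: Goreinstein GT-variety} (Gorensteinness of $R^{\overline{\Gamma}}$ for $\Gamma = \langle M_{n+1;0,1,\hdots,n}\rangle$ with $n$ even) with Proposition \ref{Proposition: level from Gorenstein}. Your extra check that the hypotheses line up (in particular that $d = n+1$ satisfies $n < d$) is a harmless and welcome bit of diligence beyond the paper's one-line proof.
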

\begin{proof} It follows directly from Propositions \ref{Corollary: Goreinstein GT-variety} and \ref{Proposition: level from Gorenstein}.
\end{proof}

The rest of this section concerns the Castelnuovo-Mumford regularity $\reg(R^{\overline{\Gamma}})$ of $R^{\overline{\Gamma}}$ with $\Gamma = \langle M_{d;\alpha_0,\hdots,\alpha_n} \rangle \subset \GL(n+1,\kk)$ a cyclic group of order $d$ as in Notation \ref{Notation: group cyclic}. We characterize $\reg(R^{\overline{\Gamma}})$ in terms of $\relint(I_{d})$.   
First we need some preparation. 

\begin{definition} \em A set $\{y_{0},\hdots, y_{n}\} \subset R^{\overline{\Gamma}}$ of $n$ homogeneous elements is said to be a {\em homogeneous system of parameters}, shortly h.s.o.p, if $R^{\overline{\Gamma}}$ is a finitely generated $\kk[y_0,\hdots, y_n]$-module.  
\end{definition}

For sake of completeness we prove the following. 
\begin{proposition}\label{Proposition: h.s.o.p} $\{x_{0}^{d},\hdots, x_{n}^{d}\}$ is an h.s.o.p of 
$R^{\overline{\Gamma}}$.
\end{proposition}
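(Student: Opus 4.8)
The plan is to show that $\{x_0^d,\dots,x_n^d\}$ is a homogeneous system of parameters by verifying two things: first, that $R^{\overline{\Gamma}}$ is a finitely generated module over the polynomial subring $A := \kk[x_0^d,\dots,x_n^d]$; and second (implicitly, to justify the terminology) that these $n+1$ elements are algebraically independent and of the right number. Since $R^{\overline{\Gamma}} = \kk[H_d]$ is a normal semigroup ring of Krull dimension $n+1$ (the semigroup $H_d$ spans a full-rank sublattice of $\ZZ^{n+1}$, as $(d,\dots,d)$ and the generators $l_{m_i}$ already generate a finite-index subgroup), it suffices to produce $n+1$ homogeneous parameters, and the standard route is to exhibit a finite module basis.

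The key step is the finite generation. First I would observe that each $x_i^d$ is indeed an invariant of $\overline{\Gamma}$: it has degree $d$, and $\alpha_i d \equiv 0 \pmod d$, so it is fixed by $M_{d;\alpha_0,\dots,\alpha_n}$, while being a degree-$d$ monomial it is fixed by $M_{d;1,\dots,1}$. Hence $A \subseteq R^{\overline{\Gamma}}$. Now let $m = x_0^{a_0}\cdots x_n^{a_n} \in R^{\overline{\Gamma}}$ be any monomial generator. Writing $a_i = d q_i + s_i$ with $0 \le s_i < d$, we have $m = (x_0^d)^{q_0}\cdots(x_n^d)^{q_n}\cdot x_0^{s_0}\cdots x_n^{s_n}$. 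The leftover factor $m_{\mathrm{res}} := x_0^{s_0}\cdots x_n^{s_n}$ has total degree at most $n(d-1) < (n+1)d$, bounded independently of $m$; there are only finitely many monomials of bounded degree, so the (finitely many) such $m_{\mathrm{res}}$ that actually occur form a generating set for $R^{\overline{\Gamma}}$ as an $A$-module. More precisely, $R^{\overline{\Gamma}}$ is generated over $A$ by the (finite) set of all monomials $x_0^{s_0}\cdots x_n^{s_n}$ with $0 \le s_i < d$ — regardless of whether each such monomial is itself an invariant, the argument above shows every monomial in $R^{\overline{\Gamma}}$ lies in $\sum_{s} A\cdot x^{s}$.

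Finally I would note that $x_0^d,\dots,x_n^d$ are algebraically independent over $\kk$ (they are distinct powers of distinct variables in the polynomial ring $R$), so $A$ is a polynomial ring in $n+1$ variables; combined with the module-finiteness just established, $\dim R^{\overline{\Gamma}} = \dim A = n+1$, and the set is by definition an h.s.o.p. The only mild subtlety — the place I expect to spend a sentence being careful — is the reduction step: one must phrase it so that it does not matter whether the residual monomial $x^s$ is itself $\overline{\Gamma}$-invariant, only that $m$ decomposes as an $A$-multiple of \emph{some} residual monomial, and that the complete set of residual monomials is finite. This is immediate from the division-with-remainder on exponents, so no real obstacle remains.
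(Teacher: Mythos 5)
Your proof is correct and follows essentially the same route as the paper's: both reduce each exponent modulo $d$ and observe that the finitely many residual monomials with all exponents $< d$ generate $R^{\overline{\Gamma}}$ as a module over $\kk[x_0^d,\hdots,x_n^d]$ (the paper phrases this by passing to the quotient algebra and noting it vanishes in degrees $t \geq n+1$). The only slip is your bound $n(d-1)$ on the degree of the residual monomial, which should be $(n+1)(d-1)$; since this is still $< (n+1)d$, nothing in the argument is affected.
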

\begin{proof} We consider the graded quotient algebra $A := R^{\overline{\Gamma}}/\langle x_0^{d},\hdots, x_{n}^{d} \rangle = \bigoplus_{t \geq 1} A_{t},$\; $A_{t} = R^{\overline{\Gamma}}_{t}/\langle x_0^{d},\hdots, x_{n}^{d} \rangle_{td} = R^{\Gamma}_{td}/\langle x_0^{d},\hdots, x_{n}^{d} \rangle_{td}$. Therefore, for $t \geq n+1$ we have that $A_{t} = \langle 0 \rangle$ and for $1 \leq t \leq n$, a $\kk-$basis of $A_{t}$ is formed by the set of all monomials $m = x_0^{a_0}\cdots x_n^{a_n}\in R^{\Gamma}$ of degree $td$ such that $a_0 < d, \hdots, a_n < d$. We write $\theta_{1}, \hdots, \theta_{D}$ the set of all such monomials and $\theta_{0} = 1$. Then, it is clear that $R^{\overline{\Gamma}} = \langle \theta_{0}, \theta_{1},\hdots, \theta_{D} \rangle$ as a  $\kk[x_0^{d},\hdots, x_{n}^{d}]$-module. 
\end{proof}

We call $\{\theta_{0},\theta_{1},\hdots, \theta_{D}\}$ a set of {\em secondary invariants} of $\overline{\Gamma}$.  For $1 \leq i \leq D$, we denote $\delta_{i} = deg(\theta_{i})$. We set $e_{0} = 1$ and we define $e_{j}$, $j = 1,\hdots, n$, the multiplicities of the sequence of degrees $(\delta_{1},\delta_{1},\hdots, \delta_{D})$. Notice that $e_{1} = \mu_{d}-(n+1)$. Moreover, we have the following. 

\begin{proposition}\label{Proposition:degree and Hilbert series} (i) The number of secondary invariants of $R^{\overline{\Gamma}}$ is $D+1 = d^{n-1}$. 

(ii) The Hilbert series $\HS(R^{\overline{\Gamma}},z)$ of the ring $R^{\overline{\Gamma}}$ is 
\begin{equation}\label{Equation:Hilbertseries}
\HS(R^{\overline{\Gamma}},z) = \frac{1 + e_1z + \cdots +e_{n}z^{n}}{(1-z)^{n+1}}.
\end{equation}
In particular, the degree of $X_{d}$ is $d^{n-1}$. 
\end{proposition}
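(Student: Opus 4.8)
\textbf{Proof proposal for Proposition \ref{Proposition:degree and Hilbert series}.}

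The plan is to exploit the fact, already established in Propositions \ref{Proposition: h.s.o.p} and \ref{Proposition:Hochster CM}, that $R^{\overline{\Gamma}}$ is a Cohen--Macaulay graded algebra which is a free module over the polynomial subring $\kk[x_0^d,\hdots,x_n^d]$ generated by the secondary invariants $\{\theta_0,\theta_1,\hdots,\theta_D\}$. For part (i), I would count the secondary invariants directly from their description in the proof of Proposition \ref{Proposition: h.s.o.p}: a secondary invariant of degree $td$ corresponds to a monomial $x_0^{a_0}\cdots x_n^{a_n}\in R^{\Gamma}$ with $0\le a_i<d$ for all $i$ and $a_0+\cdots+a_n=td$. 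Equivalently, $D+1$ is the number of lattice points $(a_0,\hdots,a_n)$ with $0\le a_i\le d-1$ satisfying $\alpha_0a_0+\cdots+\alpha_na_n\equiv 0\pmod d$ (the congruence automatically forces the total degree to be a multiple of $d$). The cleanest way to evaluate this is a character-sum / roots-of-unity filter: $D+1 = \frac1d\sum_{j=0}^{d-1}\prod_{i=0}^{n}\bigl(\sum_{a=0}^{d-1}e^{j\alpha_i a}\bigr)$. For $j=0$ the inner product is $d^{n+1}$, and for $j\ne 0$ I claim every factor vanishes except possibly some, but in fact one shows the total contribution of the $j\ne0$ terms is $0$: using $\GCD(d,\alpha_0,\hdots,\alpha_n)=1$ there is at least one $i$ with $e^{j\alpha_i}\ne 1$ for each $j\ne0$... however this only kills that one factor. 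A more robust route: reduce mod the subgroup, or simply note the count is multiplicative and matches the known quotient singularity data — but the slick argument is that $D+1$ equals the degree of the covering $\af^{n+1}\to\af^{n+1}/\overline{\Gamma}$ divided appropriately; concretely $R^{\overline{\Gamma}}$ free of rank $D+1$ over $\kk[x_0^d,\hdots,x_n^d]$ while $R$ is free of rank $d^2$ over $R^{\overline{\Gamma}}$ and of rank $d^{n+1}$ over $\kk[x_0^d,\hdots,x_n^d]$, giving $(D+1)\cdot d^2 = d^{n+1}$, i.e. $D+1=d^{n-1}$. This last computation is the real heart of part (i) and I expect it to be the main obstacle — one must justify that $R$ is free of rank exactly $d^2$ over $R^{\overline{\Gamma}}$, which follows because $\pi$ is a Galois covering with group $\overline{\Gamma}$ of order $d^2$ and $R^{\overline{\Gamma}}$ is integrally closed, hence $R$ is a maximal Cohen--Macaulay $R^{\overline{\Gamma}}$-module whose rank equals the generic fiber size $d^2$.

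For part (ii), since $R^{\overline{\Gamma}}$ is a free $\kk[x_0^d,\hdots,x_n^d]$-module on the $\theta_i$, and each generator $x_i^d$ has degree $1$ in the grading of $R^{\overline{\Gamma}}$ (recall $R^{\overline{\Gamma}}_t=R_{td}\cap R^{\Gamma}$), its Hilbert series is
\begin{equation}\nonumber
\HS(R^{\overline{\Gamma}},z) = \frac{\sum_{i=0}^{D} z^{\deg\theta_i}}{(1-z)^{n+1}}.
\end{equation}
Here $\deg\theta_i$ denotes the degree of $\theta_i$ in the $R^{\overline{\Gamma}}$-grading, i.e. $\delta_i/d$ in the standard grading, which lies in $\{0,1,\hdots,n\}$ by the proof of Proposition \ref{Proposition: h.s.o.p}. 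Collecting the $\theta_i$ by degree and writing $e_j$ for the number of secondary invariants of $R^{\overline{\Gamma}}$-degree $j$ (with $e_0=1$ since only $\theta_0=1$ has degree $0$), the numerator becomes $1+e_1z+\cdots+e_nz^n$, which is exactly \eqref{Equation:Hilbertseries}. This identifies the $e_j$ of the statement with the multiplicities of the degree sequence, consistent with the remark that $e_1=\mu_d-(n+1)$ (the degree-$d$ monomial invariants other than the $n+1$ pure powers $x_i^d$).

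Finally, the degree of $X_d$ equals $\deg(R^{\overline{\Gamma}})$, which for a graded CM algebra of Krull dimension $n+1$ with Hilbert series $h(z)/(1-z)^{n+1}$ is $h(1)=1+e_1+\cdots+e_n=D+1=d^{n-1}$ by part (i). So part (ii)'s last assertion reduces to part (i) plus the standard fact that the sum of the $h$-vector of an $(n+1)$-dimensional CM ring equals its multiplicity. I would present the argument in the order: (a) recall freeness over $\kk[x_0^d,\hdots,x_n^d]$ and the $R^{\overline{\Gamma}}$-degrees of the $\theta_i$; (b) deduce the Hilbert series formula \eqref{Equation:Hilbertseries} and the interpretation of the $e_j$; (c) compute $D+1=d^{n-1}$ via the rank comparison $R/R^{\overline{\Gamma}}/\kk[x_0^d,\hdots,x_n^d]$; (d) conclude $\deg X_d = h(1)=d^{n-1}$. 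The rank comparison in (c) is where I would be most careful, since it silently uses that $R$ is Galois over $R^{\overline{\Gamma}}$ of degree $|\overline{\Gamma}|=d^2$ and that ranks are multiplicative in towers of finite free (or torsion-free MCM) extensions.
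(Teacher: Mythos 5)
Your argument is correct in substance, and it takes a genuinely different route from the paper, whose entire proof is the citation \cite[Proposition 2.3.6]{Sturmfelds} (the classical formula: the number of secondary invariants equals the product of the degrees of the primary invariants divided by the group order, here $d^{n+1}/d^{2}=d^{n-1}$). What you reconstruct in step (c) is essentially the proof of that cited formula: freeness of $R^{\overline{\Gamma}}$ over $\kk[x_0^d,\hdots,x_n^d]$ on the $\theta_i$ (Cohen--Macaulayness plus Proposition \ref{Proposition: h.s.o.p}), multiplicativity of generic ranks in the tower $\kk[x_0^d,\hdots,x_n^d]\subset R^{\overline{\Gamma}}\subset R$, and $[\operatorname{Frac}(R):\operatorname{Frac}(R^{\overline{\Gamma}})]=|\overline{\Gamma}|=d^2$ by Artin's theorem; part (ii) and the identity $\deg X_d=h(1)=D+1$ are then the standard consequences for a graded CM algebra. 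Two caveats. First, in the character-sum route you sketch and then abandon, the parenthetical claim that the congruence $\alpha_0a_0+\cdots+\alpha_na_n\equiv 0\pmod d$ ``automatically forces the total degree to be a multiple of $d$'' is false (e.g.\ $x_0$ when $\alpha_0=0$): the secondary invariants must satisfy \emph{both} congruences, i.e.\ lie in $R^{\overline{\Gamma}}$ and not merely in $R^{\Gamma}$. Since you discard that route, this does not affect your final argument, but the count you would have gotten from a single congruence is wrong. Second, $R$ is in general \emph{not} free over $R^{\overline{\Gamma}}$ (the group $\overline{\Gamma}$ is not generated by pseudo-reflections), so the middle step must be phrased as a computation of generic rank of a torsion-free module over a domain rather than of a free rank; you already hedge in this direction, and only the rank is used, so the conclusion $(D+1)\cdot d^{2}=d^{n+1}$ stands. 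You are also right to flag that everything hinges on $|\overline{\Gamma}|=d^{2}$, which the paper builds into its definition of the cyclic extension.
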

\begin{proof}
See \cite[Proposition 2.3.6]{Sturmfelds}.
\end{proof}

\begin{theorem}\label{Theorem: regularity} With the above notation,
\[n \leq \reg(R^{\overline{\Gamma}}) \leq n+1.\]
The equality $\reg(R^{\overline{\Gamma}}) = n+1$ holds if and only if $\mathcal{C}_{d,1} \neq \emptyset$. 
\end{theorem}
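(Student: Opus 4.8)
The plan is to use the combinatorial description of the canonical module $\omega_{R^{\overline{\Gamma}}} = \relint(I_d)$ together with the well-known relationship between the Castelnuovo-Mumford regularity of a Cohen-Macaulay graded ring and the degrees of the generators of its canonical module. Recall that for an aCM graded ring $A$ of dimension $n+1$, one has $\reg(A) = \max\{ j - (n+1) \mid (\omega_A)_{-j} \neq 0\}$ inverted appropriately; more concretely, if $a(A)$ denotes the $a$-invariant, then $\reg(A) = a(A) + \dim A$, and $a(A) = -\min\{ t \mid (\omega_A)_t \neq 0 \}$ (grading $\omega_A$ as a submodule of $A$). Since $\omega_{R^{\overline{\Gamma}}} = \relint(I_d)$ and, by Theorem \ref{Theorem: Canonical Module Gt-variety}, $\relint(I_d)$ is generated in degrees $d$ and $2d$ of $R^{\Gamma}$ — equivalently in internal degrees $1$ and $2$ of $R^{\overline{\Gamma}}$ — the least degree in which $\relint(I_d)$ is nonzero is $1$ if $\mathcal{C}_{d,1} \neq \emptyset$ and $2$ otherwise (it is always nonzero in degree $2$, since $(d,\hdots,d) \in H_d^+$ gives the monomial $x_0^d \cdots x_n^d$ of degree $(n+1) \leq \ldots$; more precisely, $x_0^2 \cdots$ — one checks $\mathcal{C}_{d,2} \neq \emptyset$ always).

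First I would record that $\dim R^{\overline{\Gamma}} = n+1$ (it contains the h.s.o.p. $\{x_0^d, \hdots, x_n^d\}$ by Proposition \ref{Proposition: h.s.o.p}, and it is a domain of Krull dimension $n+1$). Next I would invoke the standard formula relating regularity and the $a$-invariant for aCM rings: $\reg(R^{\overline{\Gamma}}) = a(R^{\overline{\Gamma}}) + (n+1)$, where $a(R^{\overline{\Gamma}})$ is the top degree of the local cohomology module $H^{n+1}_{\mathfrak{m}}(R^{\overline{\Gamma}})$, equivalently $-\alpha$ where $\alpha$ is the least degree with $(\omega_{R^{\overline{\Gamma}}})_\alpha \neq 0$ (using that $\omega$ is the graded dual of top local cohomology). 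Then I would plug in the description of $\omega_{R^{\overline{\Gamma}}} = \relint(I_d)$ as a graded ideal of $R^{\overline{\Gamma}}$: by Theorem \ref{Theorem: Canonical Module Gt-variety} its minimal generators sit in internal degrees $1$ and $2$, so $\alpha \in \{1,2\}$, giving $a(R^{\overline{\Gamma}}) \in \{-2, -1\}$ and hence $\reg(R^{\overline{\Gamma}}) \in \{n-1, n\}$...

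Here I have to be careful about the normalization of the grading: $R^{\overline{\Gamma}}$ is graded so that $R^{\overline{\Gamma}}_t = R^{\Gamma}_{td}$, and the canonical module of a Cohen–Macaulay positively graded $\kk$-algebra generated in degree $1$... but $R^{\overline{\Gamma}}$ is \emph{not} generated in degree $1$ as an abstract ring unless we take its presentation $S/\I(X_d)$ with $S = \kk[w_1,\hdots,w_{\mu_d}]$, $\deg w_i = 1$. With that standard grading, $\reg$ is computed from the minimal free $S$-resolution, and the last (dualizing) twist gives $\reg(R^{\overline{\Gamma}})$. The correct bookkeeping: $\omega_{R^{\overline{\Gamma}}}$ is generated in $S$-degrees equal to the $R^{\overline{\Gamma}}$-internal degrees of $\mathcal{C}_{d,1}, \mathcal{C}_{d,2}$, i.e. in degrees $1$ and $2$. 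For an aCM ring $A = S/I$ of codimension $c = \mu_d - (n+1)$, one has $\reg(A) = \max\{ j \mid \beta_{c,c+j}(A) \neq 0 \}$, and by duality $\beta_{c, c+j}(A) \neq 0$ iff $\omega_A$ has a minimal generator in degree $-(j) + (\text{shift})$; tracking the shift via the Hilbert series in Proposition \ref{Proposition:degree and Hilbert series}, $\HS(R^{\overline{\Gamma}},z) = (1 + e_1 z + \cdots + e_n z^n)/(1-z)^{n+1}$ shows the $h$-polynomial has degree $n$ (note $e_n \neq 0$ since $x_0^{d-1}\cdots x_n^{d-1} \in R^\Gamma_{(n)(d)}$... rather one needs $e_n \neq 0$, which follows because the product of all $x_i^{d-1}$ has degree $(n+1)(d-1)$, not quite $nd$; instead $e_n \neq 0$ follows from the symmetry/nonvanishing of secondary invariants in top degree, which I would justify by exhibiting a monomial invariant of degree $nd$ with all exponents $< d$, e.g. using $\GCD(d, \alpha_0, \hdots, \alpha_n) = 1$). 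Then $a(R^{\overline{\Gamma}}) = \deg(h\text{-poly}) - (n+1) = n - (n+1) = -1$ if $\omega$ is generated in degree $\leq 1$, forcing $\reg = n$; whereas if $\mathcal{C}_{d,1} = \emptyset$ then $\omega$ is generated starting in degree $2$, which shifts the effective regularity down by one, giving $\reg = n-1$...

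I realize the arithmetic above is producing $n-1, n$ rather than the claimed $n, n+1$, which signals I have an off-by-one in the normalization — the resolution of the canonical \emph{module} versus that of the ring introduces an extra shift of $1$ because $\omega_{R^{\overline{\Gamma}}}$ as presented is a module, not the ring, and the ring's regularity picks up the full $h$-polynomial degree plus the generating degree of $\omega$ minus $1$. The cleanest route, and the one I would actually write, avoids this pitfall entirely: use the characterization $\reg(A) = \max\{ t + j \mid [H^{i}_{\mathfrak{m}}(A)]_t \neq 0,\ i+j = \text{something}\}$ specialized to the aCM case where only $H^{n+1}_{\mathfrak{m}}$ is nonzero, so $\reg(A) = \max\{ t + (n+1) \mid [H^{n+1}_{\mathfrak{m}}(A)]_t \neq 0 \} = \max\{ -s + (n+1) \mid (\omega_A)_s \neq 0,\ s \text{ minimal} \}$ — wait, it is the maximum over all $s$ with $(\omega_A)_{-?}$... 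Let me just commit: the standard fact is $\reg(R^{\overline{\Gamma}}) = (n+1) - \min\{ s : (\omega_{R^{\overline{\Gamma}}})_s \neq 0 \}$ where $\omega_{R^{\overline{\Gamma}}} = \relint(I_d) \subset R^{\overline{\Gamma}}$ inherits the grading. By Theorem \ref{Theorem: Canonical Module Gt-variety} the minimal nonzero degree of $\relint(I_d)$ is $1$ if $\mathcal{C}_{d,1} \neq \emptyset$ and $2$ if $\mathcal{C}_{d,1} = \emptyset$ (it is never larger, and $\mathcal{C}_{d,2} \neq \emptyset$ always: I would exhibit an explicit monomial, for instance using that $(d,\ldots,d)/(\text{suitable})$ lies in $H_d^+$, or simply $\prod x_i^{\,c_i}$ with $\sum c_i = 2d$, each $c_i \geq 1$, and $\sum \alpha_i c_i \equiv 0 \pmod d$ — solvable since $\GCD(d,\alpha_0,\ldots,\alpha_n)=1$ gives enough freedom with $2d - (n+1) \geq d - 1 \geq n$ extra units to distribute). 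Hence $\reg(R^{\overline{\Gamma}}) = n+1$ when $\mathcal{C}_{d,1} \neq \emptyset$ and $\reg(R^{\overline{\Gamma}}) = n$ when $\mathcal{C}_{d,1} = \emptyset$, which is exactly the claimed statement $n \leq \reg(R^{\overline{\Gamma}}) \leq n+1$ with equality $n+1$ iff $\mathcal{C}_{d,1} \neq \emptyset$. The main obstacle, and the part I would spend the most care on, is pinning down the exact grading-shift convention in the formula $\reg = (n+1) - \min\deg(\omega)$ — i.e. making sure the dualizing twist is accounted for correctly (this is where the Hilbert series $\HS(R^{\overline{\Gamma}},z)$ from Proposition \ref{Proposition:degree and Hilbert series}, with numerator of degree exactly $n$, serves as an independent consistency check, since for an aCM ring $\reg$ equals the degree of the $h$-polynomial plus the initial degree of the canonical module minus $1$), and secondarily verifying rigorously that $\mathcal{C}_{d,2}$ is always nonempty so that the lower bound $\reg \geq n$ genuinely holds in all cases.
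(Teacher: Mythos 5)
Your overall strategy---reading the regularity off the initial degree of the canonical module $\relint(I_d)$, which Theorem \ref{Theorem: Canonical Module Gt-variety} pins to $1$ or $2$---is sound and is essentially dual to the paper's argument: the paper instead locates the top nonzero coefficient $e_j$ of the $h$-polynomial of Proposition \ref{Proposition:degree and Hilbert series}, using Lemma \ref{Lemma: EGZ} to produce a secondary invariant of degree $(n-1)d$ for the lower bound and the explicit bijection $\theta \mapsto x_0^d\cdots x_n^d/\theta$ between secondary invariants of degree $nd$ and $\mathcal{C}_{d,1}$ for the equality case; by graded local duality these are the same piece of information. The genuine gap is exactly the one you flag and never close. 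The formula you finally commit to, $\reg(R^{\overline{\Gamma}}) = (n+1) - \min\{s : \relint(I_d)_s \neq 0\}$, yields $\reg \in \{n-1,\, n\}$, which contradicts the conclusion $\reg \in \{n,\, n+1\}$ that you assert in the very next sentence; as written, your last step does not follow from your last formula. The resolution is that the paper normalizes $\reg(R^{\overline{\Gamma}})$ as $\deg(h)+1$, i.e.\ as the regularity of the defining ideal $\I(X_d)$ rather than of the quotient $S/\I(X_d)$ (one checks this against the resolution in Example \ref{Example: Continuacion homo ideal and canonical module}, where $\max\{j-i : \beta_{i,j}\neq 0\} = 3$ while the paper records $\reg(R^{\overline{\Gamma}})=4$). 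Since local duality gives $\deg(h) = (n+1) - \min\{s : \relint(I_d)_s \neq 0\}$, the identity you need in the paper's convention is $\reg(R^{\overline{\Gamma}}) = (n+2) - \min\{s : \relint(I_d)_s \neq 0\}$; with that single correction your argument closes.

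Two smaller points. For the lower bound you must know $\relint(I_d)_2 \neq 0$ when $\mathcal{C}_{d,1}=\emptyset$; your congruence-juggling sketch is shakier than necessary, and it is cleaner to note that $x_0^d\cdots x_n^d \in \relint(I_d)$, so $\relint(I_d)\neq 0$, and since Theorem \ref{Theorem: Canonical Module Gt-variety} says it is generated by $\mathcal{C}_{d,1}\cup\mathcal{C}_{d,2}$, the vanishing of $\mathcal{C}_{d,1}$ forces $\mathcal{C}_{d,2}\neq\emptyset$. Also, your parenthetical claim midway that $e_n\neq 0$ always holds is false---its failure is precisely the case $\reg(R^{\overline{\Gamma}})=n$---though you do not rely on it in the final version.
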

\begin{proof} The right inequality follows immediately from (\ref{Equation:Hilbertseries}). We set $m = x_0^{d}\cdots x_n^{d}$ and $m' = x_{0}^{d-1}\cdots x_n^{d-1}$. Lemma \ref{Lemma: EGZ} assures the existence of a monomial of degree $(n-1)d$ in $R^{\overline{\Gamma}}$ dividing $m'$, and hence it assures the existence of secondary invariants of degrees smaller or equal to $(n-1)d$, so $e_{n-1} > 0$ which gives us the left inequality. Now, $\reg(R^{\overline{\Gamma}}) = n+1$ if and only if $e_{n} > 0$. If $e_{n} > 0$, then there exists a secondary invariant $\theta$ of degree $nd$ and we obtain $m/\theta \in \mathcal{C}_{d,1}$. Conversely, let $p = x_{0}^{a_0}\cdots x_{n}^{a_{n}} \in \mathcal{C}_{1,d}$. Notice that necessarily $a_{i} < d$, $i = 0,\hdots, n$, thus $m/p$ is a secondary invariant of degree $nd$. 
\end{proof}

To end this section,  we present some examples illustrating the last results. They also bring to light how the Hilbert series and regularity of $R^{\overline{\Gamma}}$ can be deduced by just looking at the set of invariants of $\overline{\Gamma}$ of degree smaller or equal to $nd$, and vice versa.

\begin{example}\rm (i) Fix integers $2 = n < d$ and fix $\Gamma = \langle M_{d;0,\alpha_{1},\alpha_{2}} \rangle \subset \GL(3,\kk)$ with $0 < \alpha_{1} < \alpha_{2}$. Let $\lambda, 0 < \mu \leq 
\frac{d}{\GCD(a,d)}$ be the uniquely determined integers satisfying $\alpha_{2} = \lambda \frac{\alpha_{1}}{\GCD(\alpha_{1},d)} + \mu \frac{d}{\GCD(\alpha_{1},d)}$. In \cite[Proposition 4.12]{CMM-R}, it is proved that 
\[\HS(R^{\overline{\Gamma}},z) = \frac{\frac{d-\theta(\alpha_{1},\alpha_{2},d)+2}{2}z^{2} + \frac{d+\theta(\alpha_{1},\alpha_{2},d)-4}{2}z + 1}{(1-z)^{3}},\]
where $\theta(\alpha_{1},\alpha_{2},d) = \GCD(\alpha_{1},d) + \GCD(\lambda,d') + \GCD(\lambda-1,d')$. 
By Proposition \ref{Proposition:degree and Hilbert series}, there are exactly $\frac{d+\theta(\alpha_{1},\alpha_{2},d)-4}{2}$ secondary invariants of degree $d$ and 
$\frac{d-\theta(\alpha_{1},\alpha_{2},d)+2}{2}$ secondary invariants of degree $2d$. 
From this it can be easily deduced that $\reg(R^{\overline{\Gamma}}) = 3$. 

(ii) Fix $n = 3$, $d = 4$ and fix $\Gamma = \langle M_{4,0,1,2,3} \rangle \subset \GL(4,\kk)$. We denote $I_{4}$ its associated $GT-$system. The ideal $I_{4}$ is generated by the following $10$ monomials
 \[x_0^4,x_0^2 x_2^2, x_0^2 x_1x_3,x_0 x_1^2 x_2, x_0 x_1x_3^2,x_1^4, x_1^2 x_3^2, x_1 x_2^2x_3, x_2^4,  x_3^4.\]
Therefore $\relint(I_{4}) = \langle \mathcal{C}_{4,2} \rangle$ with $\mathcal{C}_{4,2} = 
\{x_0^4 x_1 x_2^2x_3, x_0^3 x_1^3 x_2x_3 , x_0^3 x_1 x_2x_3^3 , x_0 x_1^3 x_2^3x_3,x_0^2 x_1^2 x_2^2x_3^2,$ $x_0^2 x_1 x_2^4x_3, x_0 x_1^4 x_2x_3^2, x_0 x_1^2 x_2x_3^4, x_0 x_1 x_2^3x_3^3  \}$. 
There are $6$ and $9$ secondary invariants of degree $d$ and $2d$, respectively, and by Theorem \ref{Theorem: regularity}, $\reg(R^{\overline{\Gamma}}) = 3$. By Proposition \ref{Proposition:degree and Hilbert series}, the Hilbert series of $R^{\overline{\Gamma}}$ is the following
\[\HS(R^{\overline{\Gamma}},z) = \frac{9z^2 + 6z+1}{(1-z)^{4}}.\]

(iii) Let $\Gamma = \langle M_{6,0,1,2,3} \rangle \subset \GL(4,\kk)$ be a cyclic group of order $6$. The ideal $I_{6}$ is generated by $16$ monomial invariants of $\Gamma$ of degree $6$ and the ideal $\relint(I_{6}) = (x_0^3x_1x_2x_3, x_0^2x_1^4x_2^4x_3^2, $ $x_0^2x_1^3x_2^6x_3, x_0x_1^6x_2^3x_3^2, x_0x_1^5x_2^5x_3, x_0x_1x_2x_3^3)$ (see Examples \ref{Example: 3fold homo ideal and canonical module} and \ref{Example: Continuacion homo ideal and canonical module}). Therefore, $R^{\overline{\Gamma}}$ has $12$ secondary invariants of degree $d$, it has $2$ secondary invariants of degree $3d$ and by Theorem \ref{Theorem: regularity} or Example \ref{Example: Continuacion homo ideal and canonical module}, $\reg(R^{\overline{\Gamma}}) = 4$. Hence from Proposition \ref{Proposition:degree and Hilbert series}, we can deduce immediately that the Hilbert series of the ring $R^{\overline{\Gamma}}$ is the following
\[\HS(R^{\overline{\Gamma}},z) = \frac{2z^3 + 21z^2 + 12z + 1}{(1-z)^3}.\]

(iv) Fix integers $2 \leq n$ and $d = n+1$ with $n$ even, and fix $\Gamma = \langle M_{t(n+1);0,1,2,\hdots,n} \rangle \subset \GL(n+1,\kk)$. By Corollary \ref{Corollary: Goreinstein GT-variety}, $R^{\overline{\Gamma}}$ is Gorenstein and by Theorem \ref{Theorem: regularity}, we obtain  that $\reg(\overline{\Gamma}) = n+1$. 
\end{example}

\section{Cohomology of normal bundles of RL--varieties}
\label{Section: Cohomology of normal bundles of RL-varieties}

In this section, we introduce a new family of smooth rational monomial projections of the Veronese variety $\nu_{d}(\PP^{n}) \subset \PP^{\binom{n+d}{n}-1}$ which naturally arises from {\em level} $GT-$varieties (see Definition \ref{Definition: level GT-variety}) and we study  their canonical modules. We called them $RL-$varieties to stress the link with the notions of   the {\bf r}elative interior and  {\bf l}evelness.  We devote the rest of this work to determine the cohomology of the normal bundle of any $RL-$variety (see Theorem \ref{Theorem: dimension cohomology normal}). Both, the coordinate ring and the canonical module of $GT-$varieties, play an important role on our computations and the proof of Theorem \ref{Theorem: dimension cohomology normal} is inspired by \cite{Alzati-Re}. 

In Section \ref{Section: canonical module}, we have seen that the canonical module $\omega_{X_{d}}$ of a $GT-$variety $X_d$ with group a finite cyclic group $\Gamma = \langle M_{d;\alpha_0,\hdots, \alpha_n} \rangle \subset \GL(n+1,\kk)$ of order $d$ is identified with the ideal $\relint(I_d) = (x_0^{a_0}\cdots x_n^{a_n} \in R^{\overline{\Gamma}} \,\mid\, 0 \neq a_0 \cdots a_n)$ and we have proved that $\relint(I_d)$ is generated by monomials of degree $d$ and $2d$. We begin with the following definition.

\begin{definition}\label{Definition: level GT-variety} \rm Let $X_{d}$ be a $GT-$variety with group a finite cyclic group $\Gamma = \langle M_{d;\alpha_{0},\hdots, \alpha_{n}} \rangle \subset \GL(n+1,\kk)$ of order $d$ and associated $GT-$system $I_{d}$. We say that $X_{d}$ is a {\em level} $GT-$variety if $R^{\overline{\Gamma}}$ is a level ring and, in addition, $\reg(R^{\overline{\Gamma}}) = n+1$. Equivalently, if its canonical module $\relint(I_{d}) = \langle\mathcal{C}_{d,1} \rangle$. (See Proposition \ref{Proposition: Canonical module} and Theorem \ref{Theorem: Canonical Module Gt-variety}). 
\end{definition}

Let us see some examples of level $GT-$varieties of any dimension $2 \leq n$, and next a necessary condition on the matrices $M_{d;\alpha_0,\hdots, \alpha_{n}}$ for $X_{d}$ be level. 

\begin{example} \rm \label{Example: level GT-varieties} (i) All $GT-$surfaces are level (see \cite[Corollary 4.13]{CMM-R}). 

(ii) Fix integers $4 \leq n, 1 \leq k$ with $n$ even. For $d := k(n+1)$ and for the finite cyclic group $\Gamma = \langle M_{d;0,1,2,\hdots, n} \rangle \subset \GL(n+1,\kk)$ of order $d$, the associated $GT-$variety $X_{d}$ is level (see Corollary \ref{Corollary: level M(123...n)}). 

(iii) Fix integers $3 \leq n$, $1 \leq t$ with $n$ odd and fix a finite cyclic group $\Gamma_t = \langle M_{t(n+1);0,1,\hdots,1,2} \rangle$ of order $d = t(n+1)$. Then $X_{t(n+1)}$ is a level $GT-$variety. 

\begin{proof} We fix $t \geq 1$. The monomials $x_0^{a_0} \cdots x_n^{a_n}$ of degree $d$ in $R^{\overline{\Gamma}_1}$ are the $\ZZ_{\geq 0}^{n+1}$-solutions of the systems:
\[(*)_{r}\left\{\begin{array}{rclrclrclrclrcl}
y_0 &+& y_1 &+& \cdots &+& y_{n-1} &+& y_n & = & d\\
& & y_1 &+& \cdots &+& y_{n-1} &+& 2y_n & = & rd,
\end{array}\right. \quad \quad r = 0,1,2.\]
We consider new variables $\alpha = y_0, \beta = y_1+\cdots+y_{n-1}$ and $\gamma = y_n$ and the integer systems
\[(**)_{r} \left\{\begin{array}{rclrclrcl}
\alpha &+& \beta &+& \gamma &=& d\\
& & \beta &+& 2\gamma &=& rd,
\end{array}\right. \quad \quad r = 0,1,2.\]
Any solution $(a_0,\hdots, a_n) \in \ZZ_{\geq 0}^{n+1}$ of $(*)_{r}$ gives rise to a solution $(a_0,a_1+\cdots +a_{n-1},a_n) \in \ZZ_{\geq 0}^{3}$ of
$(**)_{r}$. Conversely, a solution $(\alpha,\beta,\gamma) \in \ZZ_{\geq 0}^{3}$ of $(**)_{r}$ gives rise to $\binom{\beta+n-2}{n-2}$ solutions $(\alpha, 
a_1,\hdots, a_{n-1},\gamma) \in \ZZ_{\geq 0}^{n+1}$ of $(**)_{r}$, where 
$a_{1}+\cdots + a_{n-1} = \beta$. The solutions $(\alpha,\beta,\gamma) \in \ZZ_{\geq 0}^{3}$ of one of the systems $(**)_{r}$ are $(d,0,0), (0,0,d)$ and $\{(\gamma,d-2\gamma,\gamma) \;\mid \; \gamma = 0,\hdots,\frac{d}{2}\}$.   Therefore we have
\[\mu_{d} = 2 + \sum_{\gamma=0}^{\frac{d}{2}} \binom{d-2\gamma + n-2}{n-2}.\]
Since $x_0 \cdots x_n$ is an invariant of $\Gamma_{1} \subset \GL(n+1,\kk)$, as in the proof of Proposition \ref{Corollary: Goreinstein GT-variety} it follows that $R^{\overline{\Gamma}_{1}}$ is Gorenstein; by Proposition \ref{Proposition: level from Gorenstein} we only have to check that $\mu_{d} \leq \binom{n-1+d}{n-1}$. For $n \geq 3$, it holds that 
\[\mu_{d} = 2 + \sum_{\substack{\gamma=n-1\\ \gamma \; \text{odd}}}^{d+n-2} \binom{\gamma}{n-2} \leq 2 + \sum_{\widetilde{\gamma}=n-2}^{d+n-2} \binom{\widetilde{\gamma}}{n-2} - (n-1) \leq \sum_{\widetilde{\gamma}=n-2}^{d+n-2} \binom{\widetilde{\gamma}}{n-2} = \binom{n-1+d}{n-1}.\]
\end{proof}
\end{example}

\begin{proposition}\label{Proposition: necessary condition to be level}
Fix integers $2 \leq n < d$, a finite cyclic group $\Gamma = \langle M_{d;\alpha_0,\hdots, \alpha_n} \rangle \subset \GL(n+1,\kk)$ of order $d$ and set $I_{d}$ the ideal generated by all monomial invariants of $\Gamma$ of degree $d$. If $\mathcal{C}_{d,1} \neq \emptyset$, then there are at least three indices two by two distinct. 
\end{proposition}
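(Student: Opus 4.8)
The plan is to prove the contrapositive: assuming that at most two distinct values occur among $\alpha_0 \le \cdots \le \alpha_n$, I will show that $\mathcal{C}_{d,1} = \emptyset$. By Notation~\ref{Notation: group cyclic} the $\alpha_i$ are not all equal, so there are exactly two distinct values; since the tuple is non-decreasing it then has the block form $\alpha_0 = \cdots = \alpha_s =: a$, $\alpha_{s+1} = \cdots = \alpha_n =: b$ with $a < b$ and $0 \le s \le n-1$. The first point to record is that $\GCD(d, b-a) = 1$: every difference $\alpha_j - \alpha_i$ lies in $\{0, \pm(b-a)\}$, and the requirement that $\overline{\Gamma}$ have order $d^2$ is equivalent to $\GCD(d, \alpha_1 - \alpha_0, \dots, \alpha_n - \alpha_0) = 1$, which in this two-value situation collapses to $\GCD(d, b-a) = 1$. (Under the normalization $\alpha_0 = 0$ this is immediate, since then $a = 0$ and $\GCD(d, b) = \GCD(d, \alpha_0, \dots, \alpha_n) = 1$.)

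Then I would take an arbitrary candidate $m = x_0^{a_0}\cdots x_n^{a_n} \in \mathcal{C}_{d,1}$ and derive a contradiction. Such an $m$ is a degree-$d$ monomial invariant of $\Gamma$ all of whose exponents are positive, so $a_i \ge 1$ for every $i$, $a_0 + \cdots + a_n = d$, and $\sum_{i=0}^{n} \alpha_i a_i \equiv 0 \pmod{d}$. Put $A := a_0 + \cdots + a_s$ and $B := a_{s+1} + \cdots + a_n$; then $A + B = d$, while $A \ge s+1 \ge 1$ and $B \ge n - s \ge 1$, so $1 \le A \le d-1$. Substituting $B = d - A$,
\[
\sum_{i=0}^{n} \alpha_i a_i \;=\; aA + bB \;=\; aA + b(d - A) \;=\; (a-b)A + bd \;\equiv\; (a-b)A \pmod{d},
\]
hence $(b-a)A \equiv 0 \pmod{d}$. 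Since $\GCD(b-a, d) = 1$ this forces $d \mid A$, contradicting $1 \le A \le d-1$. Therefore no such $m$ exists, $\mathcal{C}_{d,1} = \emptyset$, and the contrapositive is proved; equivalently, $\mathcal{C}_{d,1} \ne \emptyset$ implies that at least three of $\alpha_0, \dots, \alpha_n$ are pairwise distinct.

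I do not anticipate a genuine obstacle here: once the two-block structure is in place, the argument is a single congruence computation. The one step that needs attention is the coprimality $\GCD(d, b-a) = 1$, which is exactly where the standing hypothesis that $\overline{\Gamma}$ has order $d^2$ (equivalently, the normalization $\alpha_0 = 0$) enters; this hypothesis is essential, as the example $d = 6$, $(\alpha_0, \alpha_1, \alpha_2) = (1,1,3)$ shows: there $\overline{\Gamma}$ has order only $18$, and $x_0 x_1^2 x_2^3 \in \mathcal{C}_{d,1}$ even though only two of the $\alpha_i$ are distinct.
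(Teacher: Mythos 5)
Your proof is correct and follows essentially the same route as the paper's: reduce to the two-block form and use the congruence $\sum_i \alpha_i a_i \equiv 0 \pmod{d}$ together with $\GCD(d,b-a)=1$ to force the support of any degree-$d$ invariant monomial into a single block, so that no monomial with all exponents positive can be invariant. The paper simply normalizes to $M_{d;0,\hdots,0,a,\hdots,a}$ and argues by contradiction rather than by contraposition; your explicit justification of the coprimality $\GCD(d,b-a)=1$, and the remark that it rests on $\overline{\Gamma}$ having order $d^{2}$, is a point the paper leaves implicit.
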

\begin{proof} By contradiction, we assume $M_{d;\alpha_{0}, \hdots, \alpha_{n}} = M_{d; 0,\stackrel{l+1}{\hdots}, 0, a, \stackrel{n-l}{\hdots}, a}$ with $0 < a < d$ such that $\GCD(a,d) = 1$. Therefore, for any monomial $m \in R^{\Gamma}$ of degree $d$
it holds that $\supp(m) \in \{x_{0},\hdots, x_{l}\}$ or $\supp(m) \in \{
x_{l+1},\hdots, x_{n}\}$. In other words, $\mathcal{C}_{d,1} = \emptyset$ which contradicts our hypothesis. 
\end{proof}

\begin{remark} \rm Let $\Gamma = \langle M_{d;\alpha_0,\hdots,\alpha_n} \rangle \subset \GL(n+1,\kk)$ be a finite cyclic group of order $d$ and $I_d$ the monomial artinian ideal generated by the set of all monomials $\{m_1,\hdots,m_{\mu_d}\} \subset R^{\Gamma}$ of degree $d$. Definition \ref{Definition: level GT-variety} can be extended to any variety $X_d = \varphi_{I_{d}}(\PP^{n}) \subset \PP^{\mu_d-1}$ even if it is not a $GT-$variety, or equivalently, $I_d$ is not a Togliatti system, i.e. the condition $\mu_d \leq \binom{n-1+d}{n-1}$ is not satisfied. Many notions and results of this section remain true in this more general setup. 
\end{remark}

From now onwards, we fix a level $GT-$variety $X_{d}$ with group a finite cyclic group  $\Gamma = \langle M_{d;\alpha_0,\hdots, \alpha_n} \rangle \subset \GL(n+1,\kk)$ of order $d$ and we denote $I_{d}$ its associated $GT-$system. We denote $\eta_{d} = |\mathcal{C}_{d,1}|$, i.e.  the number of monomials of degree $d$ in $\relint(I_{d})$ and we set $N_d:= \binom{n+d}{d} - \eta_{d}-1$. By $f_{d}: \PP^{n} \to \PP^{N_d}$ we denote the morphism induced by the inverse system $\relint(I_{d})^{-1}$. We denote $\cX_{d} = f_{d}(\PP^{n}) \subset \PP^{N}$ and we call $\cX_{d}$ the {\em $RL-$variety associated to $X_{d}$.} We have the following. 

\begin{proposition}\label{Proposition: Embedding} $\cX_{d}$ is a smooth rational variety and $f_{d}$ is an embedding. 
\end{proposition}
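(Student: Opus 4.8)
The plan is to produce the morphism $f_d$ explicitly as a linear projection of the Veronese embedding and then verify both that it separates points and that it separates tangent vectors; equivalently, that the linear system $\relint(I_d)^{-1}$ — which is the span of the monomials $\mathcal{M}_{n,d} \setminus \mathcal{C}_{d,1}$ — is very ample on $\PP^n$. First I would record the set-up: $f_d$ is induced by the complete linear system of degree $d$ on $\PP^n$ minus the $\eta_d$ sections corresponding to monomials $x_0^{a_0}\cdots x_n^{a_n}$ with all $a_i>0$ lying in $R^{\overline{\Gamma}}$. So $f_d$ is the composition of the $d$-uple Veronese embedding $\nu_d\colon \PP^n \hookrightarrow \PP^{\binom{n+d}{d}-1}$ with the linear projection $\pi$ from the linear subspace $\Lambda \subset \PP^{\binom{n+d}{d}-1}$ spanned by the coordinate points indexed by $\mathcal{C}_{d,1}$. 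Since $\nu_d$ is an embedding, it suffices to show that $\Lambda$ is disjoint from $\nu_d(\PP^n)$ and from all its embedded tangent spaces, i.e. $\Lambda \cap \mathrm{Sec}_1(\nu_d(\PP^n)) = \emptyset$ in the sense needed for a projection to remain an embedding (disjointness from the variety of tangent lines / first secant tangential locus).

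The key geometric point is that the monomials being removed, those in $\mathcal{C}_{d,1}$, are all divisible by $x_0 x_1 \cdots x_n$ after a fashion — more precisely, each has strictly positive exponent in every variable. I would exploit this as follows. To see $\nu_d(\PP^n) \cap \Lambda = \emptyset$: a point of $\nu_d(\PP^n)$ lies in $\Lambda$ only if all the coordinates indexed by monomials \emph{not} in $\mathcal{C}_{d,1}$ vanish; but the monomials $x_i^d$ for $i = 0,\dots,n$ are never in $\mathcal{C}_{d,1}$ (they have zero exponents), and $(x_0^d:\dots:x_n^d)$ cannot all vanish on $\PP^n$, so no point of $\PP^n$ maps into $\Lambda$. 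This already shows $f_d$ is a morphism (the linear system is base-point free). For injectivity and immersiveness I would argue directly on affine charts: fixing the chart $x_0 \neq 0$ and dehomogenising, the coordinate functions of $f_d$ include $x_1/x_0, \dots, x_n/x_0$ (these come from the monomials $x_0^{d-1}x_i$, which have a zero exponent and hence survive the projection), so already the \emph{first-order} Veronese coordinates are present in $f_d$. Consequently $f_d$ restricted to this chart is injective and an immersion, because it factors the ordinary embedding $\PP^n \setminus \{x_0=0\} \hookrightarrow \af^n$. Doing this for each of the $n+1$ standard charts and checking compatibility shows $f_d$ is a closed immersion on all of $\PP^n$, whence $\cX_d = f_d(\PP^n)$ is smooth, and rational since it is the image of $\PP^n$ under a birational (indeed bijective regular) map.

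Let me spell out the crucial observation that makes all the charts work: for every $i \in \{0,\dots,n\}$ the monomials $x_i^d$ and $x_i^{d-1}x_j$ (for $j \neq i$) are \emph{not} in $\mathcal{C}_{d,1}$, because $\mathcal{C}_{d,1}$ consists of invariant monomials with all exponents positive. Hence for each $i$ the coordinate ring restricted to the chart $\{x_i \neq 0\}$ contains all the linear functions $x_j/x_i$, $j \neq i$, so $f_d|_{\{x_i\neq 0\}}$ is a locally closed embedding. Since the $n+1$ charts cover $\PP^n$ and $f_d$ is globally a morphism to projective space, gluing gives that $f_d$ is an embedding. The one delicate part — the part I expect to be the main obstacle — is verifying global injectivity, i.e. that two points $p \in \{x_i \neq 0\}$ and $q \in \{x_j \neq 0\}$ with $p \neq q$ are not identified when they lie in no common standard chart; but here one notes that the image under $f_d$ still separates the "coordinate hyperplanes": the monomial $x_i^d$ is a coordinate of $f_d$ whose vanishing locus on $\cX_d$ pulls back exactly to $\{x_i = 0\}$, so the stratification of $\PP^n$ by coordinate subspaces is recovered from $\cX_d$, and on each stratum one reduces to the chart argument in fewer variables. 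This handles the remaining cases and completes the proof that $f_d$ is an embedding and $\cX_d$ is a smooth rational variety.
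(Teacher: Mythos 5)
Your proposal is correct and rests on the same key observation as the paper's proof: since every monomial in $\mathcal{C}_{d,1}$ has all exponents strictly positive and $n\geq 2$, all the monomials $x_i^{d}$ and $x_i^{d-1}x_j$ survive the projection, and a monomial subsystem of degree $d$ containing these is very ample. The paper simply invokes this sufficient condition together with the toric smoothness criterion of Gelfand--Kapranov--Zelevinsky, whereas you verify it directly on the standard affine charts (base-point-freeness via the $x_i^{d}$, local immersion via the $x_i^{d-1}x_j$, and global injectivity by recovering the supports of points from the coordinates $x_i^{d}$); the two arguments are essentially the same.
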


\begin{proof} $\cX_{d}$ is a toric variety parametrized by all monomials of degree $d$ in $\relint(I_{d})^{-1}$. It is straightforward to check that $\cX_{d}$ satisfies the smoothness criterion for toric varieties \cite[Chapter 5 - Corollary 3.2]{Gelfand-Kapranov-Zelevinsky}. In particular, $\relint(I_d)^{-1}$ contains all monomials $x_i^{d-1}x_j$ for all $i,j \in \{0,\hdots,n\}$, which is a sufficient condition for $f_{d}$ to be an embedding. 
\end{proof}

In \cite{Alzati-Re}, the authors develop a new method to compute the cohomology of the normal bundle of smooth rational projections of the Veronese variety $\nu_{n,d}(\PP^{n}) \subset \PP^{\binom{n+d}{n}-1}$ for which the parametrization is an embedding.  Let $\cX_{d}$ be an $RL-$variety associated to a level $GT-$variety $X_{d}$ with group $\Gamma$. By Proposition \ref{Proposition: Embedding}, any $RL-$variety $\cX_{d}$ is of this kind. Furthermore, the relation between $\cX_{d}$ and $\relint(I_d) \subset R^{\overline{\Gamma}}$ allows us to apply this approach to any $RL-$variety $\cX_{d}$. In this setting, we have the following presentation of the normal bundle $\cN_{\cX_{d}}$ of the $RL-$variety $\cX_{d} \subset \PP^{\N_{d}}$ (see \cite[(3.3)]{Alzati-Re}):
\begin{equation}\label{Equation: Exact sequence normal}
0 \to \mathcal{O}_{\PP^{n}}^{n+1}(1) \to \mathcal{O}_{\PP^{n}}^{N_{d}+1}(d) \to \cN_{\cX_{d}} \to 0.
\end{equation}

Taking the long sequence of cohomology for (\ref{Equation: Exact sequence normal}), we determine the cohomology $\kk-$vector spaces $\Comh^{i}(\cX_{d}, \cN_{\cX_{d}}(-k))$ in most cases, as the following result shows. 

\begin{proposition}\label{Proposition: H0-H1 normal bundle} Let $\cX_{d} \subset \PP^{N_{d}}$ be an $RL-$variety of dimension $n \geq 2$, we have:

(i) for all $0 < i < n-1$ and $k \in \ZZ$, $\Comh^{i}(\cX_{d},\cN_{\cX_{d}}(-k)) = 0$.

(ii) \[ \comh^{0}(\cX_{d},\cN_{\cX_{d}}(-k)) = \left\{\begin{array}{lcl}
(N_{d}+1)\binom{n+d-k}{n} - (n+1)\binom{n+1-k}{n} & \quad & k \leq 1\\
(N_{d}+1)\binom{n+d-k}{n}                           & \quad & 1 < k \leq d\\
0                                                 & \quad & \text{otherwise.}
\end{array}\right.\]

\end{proposition}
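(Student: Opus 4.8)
The strategy is to extract cohomological information directly from the long exact sequence associated to the short exact sequence (\ref{Equation: Exact sequence normal}), twisted by $\mathcal{O}_{\PP^n}(-k)$:
\begin{equation*}
0 \to \mathcal{O}_{\PP^{n}}^{n+1}(1-k) \to \mathcal{O}_{\PP^{n}}^{N_{d}+1}(d-k) \to \cN_{\cX_{d}}(-k) \to 0.
\end{equation*}
Since $\cX_d \cong \PP^n$ via the embedding $f_d$, cohomology of $\cN_{\cX_d}(-k)$ is computed as cohomology of the corresponding sheaf on $\PP^n$. The key input is the complete knowledge of the cohomology of line bundles on $\PP^n$: $\Comh^i(\PP^n, \mathcal{O}(m)) = 0$ for all $0 < i < n$ and every $m$, while $\comh^0(\PP^n,\mathcal{O}(m)) = \binom{n+m}{n}$ for $m \geq 0$ and $0$ for $m < 0$, and $\Comh^n(\PP^n,\mathcal{O}(m)) = \Comh^0(\PP^n,\mathcal{O}(-n-1-m))^\vee$ by Serre duality.

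First I would prove part (i). In the long exact sequence, the terms $\Comh^i(\mathcal{O}_{\PP^n}^{N_d+1}(d-k))$ and $\Comh^{i+1}(\mathcal{O}_{\PP^n}^{n+1}(1-k))$ surrounding $\Comh^i(\cN_{\cX_d}(-k))$ both vanish whenever $0 < i < n$ and $0 < i+1 < n$, i.e. for $0 < i < n-1$, because intermediate cohomology of line bundles on projective space is zero. This immediately forces $\Comh^i(\cX_d,\cN_{\cX_d}(-k)) = 0$ in that range, for every $k \in \ZZ$, which is exactly (i).

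For part (ii), I would examine the beginning of the long exact sequence:
\begin{equation*}
0 \to \Comh^0(\mathcal{O}^{n+1}(1-k)) \to \Comh^0(\mathcal{O}^{N_d+1}(d-k)) \to \Comh^0(\cN_{\cX_d}(-k)) \to \Comh^1(\mathcal{O}^{n+1}(1-k)).
\end{equation*}
Since $n \geq 2$, we have $\Comh^1(\mathcal{O}_{\PP^n}^{n+1}(1-k)) = 0$ (it is intermediate cohomology on $\PP^n$), so $\comh^0(\cX_d,\cN_{\cX_d}(-k)) = (N_d+1)\comh^0(\PP^n,\mathcal{O}(d-k)) - (n+1)\comh^0(\PP^n,\mathcal{O}(1-k))$. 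Now I would split into cases according to the signs of $d-k$ and $1-k$: when $k \leq 1$ both binomials are nonzero, giving $(N_d+1)\binom{n+d-k}{n} - (n+1)\binom{n+1-k}{n}$; when $1 < k \leq d$ the second term vanishes since $1-k < 0$, leaving $(N_d+1)\binom{n+d-k}{n}$; and when $k > d$ both terms vanish, giving $0$. This is precisely the claimed formula, completing (ii).

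\textbf{Main obstacle.} The argument is essentially bookkeeping with the long exact sequence and standard facts about $\Comh^*(\PP^n,\mathcal{O}(m))$, so there is no serious technical difficulty here; the only subtlety worth stating carefully is that part (i) and part (ii) both genuinely use $n \geq 2$ (so that the relevant degrees land strictly inside the intermediate-cohomology range), and that the vanishing $\Comh^1(\mathcal{O}_{\PP^n}(1-k)) = 0$ — which makes the $\Comh^0$ computation clean — would fail for $n = 1$. The harder cohomology groups, namely $\Comh^{n-1}$ and $\Comh^n$ of $\cN_{\cX_d}(-k)$ (where the connecting maps $\Comh^{n-1}(\mathcal{O}^{N_d+1}(d-k)) \to \cdots$ and $\Comh^n(\mathcal{O}^{n+1}(1-k)) \to \Comh^n(\mathcal{O}^{N_d+1}(d-k))$ need not vanish), are deliberately left out of this proposition and are presumably treated separately using the level structure of $R^{\overline{\Gamma}}$ and its canonical module.
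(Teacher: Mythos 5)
Your proposal is correct and follows essentially the same route as the paper: twist the presentation (\ref{Equation: Exact sequence normal}) by $\mathcal{O}_{\PP^n}(-k)$, read off (i) from the vanishing of intermediate cohomology of line bundles on $\PP^n$ in the long exact sequence, and compute (ii) from the resulting short exact sequence on $\Comh^0$ together with the Bott formulas. The case analysis on the signs of $d-k$ and $1-k$ and the remark that $n\geq 2$ is needed for $\Comh^1(\PP^n,\mathcal{O}(1-k))=0$ are both accurate.
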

\begin{proof} We fix $k \in \ZZ$. We twist (\ref{Equation: Exact sequence normal}) by $\mathcal{O}_{\PP^{n}}(-k)$ and then we consider the long exact sequence of cohomology. For any $i$ and $k$ we obtain
\begin{equation}\label{Equation: long sequence cohomology} 
\to \Comh^{i}(\PP^{n},\mathcal{O}_{\PP^{n}}^{N_{d}+1}(d-k)) \to \Comh^i(\cX_{d},\cN_{\cX_{d}}(-k)) \to \Comh^{i+1}(\PP^{n},\mathcal{O}_{\PP^{n}}^{n+1}(1-k))\to
\end{equation}
From the additivity of the cohomology, it follows the vanishing $\Comh^{i}(\cX_{d},\cN_{\cX_{d}}(-k)) = 0$ for all $0 < i < n-1$. In addition, we obtain the presentation
\[0 \to \Comh^{0}(\PP^{n},\mathcal{O}_{\PP^{n}}^{n+1}(1-k)) \to 
\Comh^{0}(\PP^{n},\mathcal{O}_{\PP^{n}}^{N_{d}+1}(d-k)) \to \Comh^{0}(\cX_{d}, \cN_{\cX_{d}}(-k)) \to 0.\]
The result follows from the Bott formulas for the cohomology of $\PP^{n}$ (see \cite{Okonek-Schneider-Spindler}). 
\end{proof}

Thus far, we have determined the dimension of $\Comh^{i}(\cX_{d},\cN_{\cX_{d}}(-k))$ for any $i$ and $k$ except: 
\[\Comh^{n-1}(\cX_{d},\cN_{\cX_{d}}(-k)) \;\, and \,\; \Comh^{n}(\cX_{d},\cN_{\cX_{d}}(-k)).\]
To compute them, we apply Proposition \ref{Proposition: H0-H1 normal bundle}(i) to the long exact sequence of cohomology (\ref{Equation: long sequence cohomology}). For any $k$ we obtain the exact sequence
\begin{equation}\label{Equation:Hn-1-n normal}
0\!\to\! \Comh^{n-1}(\cX_{d},\cN_{\cX_{d}}(-k)) \to \Comh^{n}(\PP^{n},\mathcal{O}_{\PP^{n}}^{n+1}(1-k)) \to \Comh^{n}(\PP^{n}, \mathcal{O}_{\PP^{n}}^{N_{d}+1}(d-k)) \to \Comh^{n}(\cX_{d},\cN_{\cX_{d}}(-k))\!\to\! 0.
\end{equation}
As an immediate result, we get that for all $k < d+n+1$:
\begin{equation}
\Comh^{n}(\cX_{d},\cN_{\cX_{d}}(-k)) = 0 \quad \text{and} \quad \Comh^{n-1}(\cX_{d},\cN_{\cX_{d}}(-k)) \cong \Comh^{n}(\PP^{n},\mathcal{O}_{\PP^{n}}^{n+1}(1-k)).
\end{equation}
Thus, 
\begin{equation}
\comh^{n-1}(\cX_{d},\cN_{\cX_{d}}(-k)) = \left\{\begin{array}{lcl}
(n+1)\binom{k-2}{n} & \quad & n+2 \leq  k < d+n+1\\
0                       &  \quad & k \leq n+1.
\end{array}\right.
\end{equation}

We focus on computing $\Comh^{n-1}(\cX_{d},\cN_{\cX_{d}}(-k))$ for $k \geq d+n+1$. We need some preparation. By $\partial_{x_{0}},\hdots, \partial_{x_{n}}$ we denote the linear operators acting on $R$ as partial derivatives. Let $m \in R_{l}$ be a monomial and we write $m = x_0^{a_0}\cdots x_n^{a_{n}}$. We denote $\partial_{m}$ the composition of linear operators $\partial_{x_0}\stackrel{a_0}{\cdots}\partial_{x_{0}} \cdots \partial_{x_n}\stackrel{a_n}{\cdots}\partial_{x_n}.$

\begin{lemma}\label{Lemma: lema tecnic per H1} Let $m$ be a monomial of degree $k-d-n-1$, let $q$ and $q'$ be monomials of degree $k-n-1$ such that $m$ divides both $q$ and $q'$, and let $0 \leq i \neq j \leq n$ be integers. Then $x_{i} \partial_{m}q$ and $x_{j}\partial_{m}q'$ are linearly independent if and only if for any monomial $m' \neq m$ of degree $k-d-n-1$ which divides $q$ and $q'$, $x_{i}\partial_{m'}q$ and $x_j \partial_{m'}q'$ are linearly independent. 
\end{lemma}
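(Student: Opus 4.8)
The plan is to reduce both sides of the asserted equivalence to one and the same condition, which mentions neither $m$ nor $m'$. The starting observation is that for a monomial $q$ divisible by $m$ the differential operator $\partial_m$ acts as division by $m$, up to a nonzero integer scalar: writing $m = x_0^{a_0}\cdots x_n^{a_n}$ and $q = x_0^{c_0}\cdots x_n^{c_n}$ with $a_s\le c_s$ for all $s$, one has $\partial_m q = \big(\prod_{s=0}^{n} c_s!/(c_s-a_s)!\big)\,(q/m)$, and the scalar is a positive integer. In particular $\partial_m q$ and $\partial_m q'$ are nonzero, so $x_i\partial_m q$ and $x_j\partial_m q'$ are nonzero scalar multiples of the \emph{monomials} $x_i\,(q/m)$ and $x_j\,(q'/m)$, respectively. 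This is exactly why the divisibility hypotheses $m\mid q$, $m\mid q'$ are imposed.

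Next I would use that two monomials with nonzero coefficients are $\kk$-linearly dependent precisely when they coincide. Hence $x_i\partial_m q$ and $x_j\partial_m q'$ fail to be linearly independent if and only if $x_i\,(q/m) = x_j\,(q'/m)$ in $R$. Since multiplication by a fixed monomial is injective on monomials and $m$ divides both $q$ and $q'$ (hence both $x_iq$ and $x_jq'$), this identity is equivalent to $x_iq = x_jq'$ — a statement about $q$, $q'$, $i$, $j$ only. Running the identical computation with any admissible $m'$ in place of $m$ produces the very same criterion $x_iq = x_jq'$. Therefore $x_i\partial_m q,\ x_j\partial_m q'$ are linearly independent if and only if $x_iq\ne x_jq'$ if and only if $x_i\partial_{m'} q,\ x_j\partial_{m'} q'$ are linearly independent, for every monomial $m'\ne m$ of degree $k-d-n-1$ dividing both $q$ and $q'$, which is precisely the claim. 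Phrasing everything through the lattice points $l_q,l_{q'}$ — the criterion reading $l_q+\mathbf{u}_i \ne l_{q'}+\mathbf{u}_j$, with $\mathbf{u}_i$ the vector having a $1$ in slot $i$ and zeros elsewhere — makes the independence from the choice of divisor completely transparent, and is probably the cleanest way to write this up.

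I do not expect a genuine obstacle here; the content is the elementary translation of the differential statement into an equality of monomials. The only point to be careful about is that none of the derivatives $\partial_m q,\partial_m q',\partial_{m'} q,\partial_{m'} q'$ collapses to zero — guaranteed exactly by the divisibility assumptions — and, for the universal quantifier on the right-hand side not to be vacuous, that common divisors of $q$ and $q'$ of the prescribed degree $k-d-n-1$ do exist in the range $k\ge d+n+1$ in which the lemma is later invoked.
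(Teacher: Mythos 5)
Your proof is correct and takes essentially the same route as the paper: both arguments reduce the linear (in)dependence of $x_i\partial_{m}q$ and $x_j\partial_{m}q'$ to the divisor-independent condition $x_iq\ne x_jq'$, the paper via an explicit exponent computation and you via the cleaner observation that, in characteristic zero, $\partial_m$ acts on multiples of $m$ as division by $m$ up to a nonzero scalar. Your remark that the statement is vacuous on one side when no common divisor $m'\ne m$ of the prescribed degree exists is a fair caveat about the formulation, not a gap in the proof.
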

\begin{proof}  We write $m' = x_{0}^{b_0}\cdots x_{n}^{b_{n}}$, $m = x_{0}^{c_0}\cdots x_{n}^{c_{n}}$, $q = x_{0}^{a_{0}}\cdots x_{n}^{a_{n}}$, $q' = x_{0}^{a_{0}'}\cdots x_{n}^{a_{n}'}$. Assume that $x_{i} \partial_{m}q$ and $x_{j}\partial_{m}q'$ are linearly independent and there is $m' \neq m$, which divides $q$ and $q'$, such that $x_{i}\partial_{m'}q$ and $x_j \partial_{m'}q'$ are linearly dependent. Therefore we have the equality
\[x_{0}^{a_{0}-b_0}\cdots x_{i}^{a_i-b_i+1} \cdots 
x_{n}^{a_n-b_n} = x_{0}^{a_0'-b_0} \cdots x_{j}^{a_j'-b_j+1}\cdots x_{n}^{a_n'-b_n},\]
which implies $a_{l} = a_{l}'$, $0 \leq l\neq i,j \leq n$, $a_i = 
a_i'-1$ and $a_j = a_j'+1.$ Then we obtain
\[\begin{array}{lll}
x_i\partial_{m}q & = & A x_{0}^{a_{0}-c_0}\cdots x_{j}^{a_j-c_j} \cdots 
x_{i}^{a_i-c_i+1}\cdots x_{n}^{a_n-c_n}\\
x_j\partial_{m}q'& = & B x_{0}^{a_0-c_0} \cdots x_{j}^{a_j-1-c_j+1}\cdots 
x_{i}^{a_i+1-c_i}\cdots x_n^{a_n-c_n}\end{array}\]
for some $A,B \in \kk\setminus\{0\}$, which is a contradiction. 
\end{proof}

An $RL-$variety $\cX_{d} \subset \PP^{N_d}$ of dimension $n \geq 2$ is a smooth rational variety embedded in $\PP^{N_d}$. In \cite{Alzati-Re}, the authors introduce a new method to compute the cohomology of the normal bundle of varieties of this kind. With the notation of \cite{Alzati-Re}, we write the embedding $f_{d}: \PP(U) \to \PP^{N_{d}}$ with $U = R_1^{\vee}$. The $RL-$variety $\cX_{d} = f_{d}(\PP(U))$ is the projection in $\PP^{N_{d}}$ of the Veronese variety  $\nu_{d}(\PP(U)) \subset \PP^{\binom{n+d}{n}-1}$ from the projective space $\PP(T)$ of dimension $\binom{n+d}{n}-N_{d}$ where $T^{\vee}$ is identified with the $\kk-$vector subspace of $R_{d}$ generated by all the monomials of degree $d$ in $\relint(I_{d}) = (x_0^{a_0}\cdots x_n^{a_n} \in R^{\overline{\Gamma}} \,\mid \, 0 \neq a_0\cdots a_n)$. Let $0 \leq i \neq j \leq n$, $l \geq 1$ and $t \geq 1$ be integers. We denote $D_{i,j}: S^{l}U \otimes S^{t}U \to S^{l-1}U \otimes S^{t-1}U$ the linear map $\partial_{x_i} \otimes \partial_{x_{j}} - \partial_{x_{j}} \otimes \partial_{x_{i}}$.

\begin{proposition}\label{Proposition: Hn-1 normal} Let $\cX_{d} \subset \PP^{N_{d}}$ be an $RL-$variety of dimension $n \geq 2$ associated to a level $GT-$variety $X_{d}$ with group $\Gamma = \langle M_{d;\alpha_0,\hdots,\alpha_n} \rangle \subset \GL(n+1,\kk)$. Then, 
\[\comh^{n-1}(\cX_{d},\cN_{\cX_{d}}(-k)) = \left\{\begin{array}{lcl}
\eta_{d} + \frac{n(d-1)}{d} \binom{n+d-1}{n} & \quad \quad & k = d+n+1\\
(n+1)\eta_{d} & \quad \quad & k = d+n+2\\
0 & \quad \quad & k \geq d+n+3.
\end{array}\right.\]
\end{proposition}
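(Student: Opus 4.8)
The plan is to continue exploiting the exact sequence (\ref{Equation:Hn-1-n normal}) in the remaining range $k \geq d+n+1$, where both $\Comh^{n}(\PP^{n},\mathcal{O}_{\PP^{n}}^{n+1}(1-k))$ and $\Comh^{n}(\PP^{n},\mathcal{O}_{\PP^{n}}^{N_d+1}(d-k))$ are nonzero, so that $\comh^{n-1}(\cX_d,\cN_{\cX_d}(-k))$ equals the dimension of the kernel of the middle map. Serre duality on $\PP^{n}$ identifies that map, up to twists, with the transpose of a multiplication/derivation map between spaces of monomials: concretely, $\Comh^{n}(\PP^{n},\mathcal{O}(1-k))^{\vee} \cong \Comh^{0}(\PP^{n},\mathcal{O}(k-n-2)) = S_{k-n-2}U^{\vee}$ and $\Comh^{n}(\PP^{n},\mathcal{O}(d-k))^{\vee} \cong S_{k-d-n-1}U^{\vee}$, and the dual of the connecting map is induced by the $(n+1)$ linear forms $x_0,\hdots,x_n$ on the $\binom{n+d}{n}$-dimensional source, restricted to the $(N_d+1)$ coordinates complementary to $T^{\vee}$. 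Following \cite{Alzati-Re}, I would phrase this as: $\comh^{n-1}(\cX_d,\cN_{\cX_d}(-k))$ equals the dimension of the cokernel of the map
\[
\bigoplus_{i=0}^{n} \left( x_i \cdot S_{k-d-n-1}U \right) \longrightarrow S_{k-n-1}U / T^{\vee}_{k-n-1},
\]
where $T^{\vee}_{k-n-1}$ denotes the degree-$(k-n-1)$ piece of the ideal $\relint(I_d)$ inside $R$, i.e. the span of monomials $x_0^{a_0}\cdots x_n^{a_n}$ of degree $k-n-1$ with all $a_i>0$ that lie in $R^{\overline{\Gamma}}$-multiples of $\mathcal{C}_{d,1}$. (The precise bookkeeping of which monomials survive is exactly what Lemma \ref{Lemma: lema tecnic per H1} and the derivation operators $D_{i,j}$ are set up to handle.)

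Next I would carry out the count degree by degree, using that $\eta_d = |\mathcal{C}_{d,1}|$, the number of degree-$d$ monomials of full support in $R^{\overline{\Gamma}}$. For $k = d+n+3$ and beyond, the source degree $k-d-n-1 \geq 2$; I claim the image already fills the target, so the cokernel vanishes. The argument is the one underlying Lemma \ref{Lemma: lema tecnic per H1}: any monomial $q$ of degree $k-n-1 \geq d+2$ that is \emph{not} in $\relint(I_d)$ must miss some variable, say $x_j$, with multiplicity $0$; then $q/x_i$ has degree $\geq d+1 > d+n+1-(n+1)$... more carefully, since $k-d-n-1\geq 2$ one can always strip off a monomial $m$ of degree $k-d-n-1$ dividing $q$ so that $q = x_i \partial_m(\text{something})$ hits $q$, and the complementary-monomial invariance of Lemma \ref{Lemma: lema tecnic per H1} propagates independence, giving surjectivity onto $S_{k-n-1}U/T^{\vee}_{k-n-1}$; hence $\comh^{n-1}=0$. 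For $k = d+n+2$, the source degree is $1$, so the image is $\sum_i x_i \cdot \langle x_0,\hdots,x_n\rangle_1 = S_2U$ mapped into $S_{d+1}U/T^{\vee}_{d+1}$; one checks the image has codimension exactly $(n+1)\eta_d$ — intuitively, each of the $\eta_d$ generators $m \in \mathcal{C}_{d,1}$ contributes the $n+1$ monomials $x_0 m, \hdots, x_n m$ (all of full support, all in $R^{\overline{\Gamma}}$), and these span a complement to the image. For $k = d+n+1$ the source degree is $0$: the map is just $(c_0,\hdots,c_n) \mapsto \sum c_i x_i \in S_1 U$, with image of dimension $n+1$ inside $S_dU/T^{\vee}_d$; since $\dim T^{\vee}_d = \eta_d$ and $\dim S_dU = \binom{n+d-1}{n} = \binom{n+d-1}{n-1}\cdot\frac{?}{}$... one uses $\binom{n+d}{n} - \binom{n+d-1}{n-1}\cdot$ — rather, $\dim S_dU = \binom{n+d-1}{n}$ is wrong; $\dim S_dU = \binom{n+d-1}{n-1}$, hmm — I would write it as $\dim(S_dU/T^\vee_d) = \binom{n+d-1}{n} - \eta_d$ using $\dim S_d(k^{n+1})=\binom{n+d-1}{n}$ — no: $\dim S_d(\kk^{n+1})=\binom{n+d}{d}=\binom{n+d}{n}$? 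For $n+1$ variables it is $\binom{n+d}{n}$. I will be careful with this in the writeup; the upshot is the cokernel dimension simplifies to $\eta_d + \frac{n(d-1)}{d}\binom{n+d-1}{n}$, the combinatorial identity being that $\frac{n(d-1)}{d}\binom{n+d-1}{n} = \binom{n+d-1}{n} - n\cdot(\text{number of degree-}d\text{ monomials on a coordinate hyperplane complement})$, which matches the count of degree-$d$ monomials not reachable from $S_1 U$ via the $x_i$'s.

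The main obstacle I anticipate is the $k = d+n+1$ and $k=d+n+2$ cases: there the cokernel is genuinely nonzero and its dimension must be pinned down exactly, not just bounded. This requires a clean combinatorial identification of the span of $\{x_i m : 0\le i\le n,\ m\in\mathcal{C}_{d,1}\}$ — one must show these $(n+1)\eta_d$ monomials are $\kk$-linearly independent (immediate, they are distinct monomials) \emph{and} that together with the image of the full multiplication map they account for \emph{all} of $S_{d+1}U$, i.e. every degree-$(d+1)$ monomial is either in $T^{\vee}_{d+1}$, or a product $x_i x_j x^{a}$ reachable from degree $2$, or of the form $x_i m$ with $m\in\mathcal C_{d,1}$. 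The level hypothesis $\relint(I_d)=\langle\mathcal{C}_{d,1}\rangle$ is crucial here, since it guarantees $T^{\vee}_{d+1} = \sum_{m\in\mathcal C_{d,1}} m\cdot R^{\overline\Gamma}_{\le 1}$-type description is controlled purely by the degree-$d$ generators; without levelness there would be extra degree-$2d$ generators polluting the count. I would therefore isolate a short lemma: for a level $GT$-variety, a monomial of degree $d+1$ lies in $\relint(I_d)$ iff it has full support and is an invariant of $\overline{\Gamma}$-multiple — and then the codimension counts fall out of Bott's formulas plus the elementary identity $(n+1)\binom{n+d-1}{n} - \binom{n+d}{n}\cdot\frac{?}{}$, which I will verify directly. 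The derivation-operator formalism $D_{i,j}$ from \cite{Alzati-Re} and Lemma \ref{Lemma: lema tecnic per H1} is exactly what lets us reduce the surjectivity-for-$k\ge d+n+3$ claim to a single base case, so that part should be routine once set up.
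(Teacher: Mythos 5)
Your overall route (the long exact sequence \eqref{Equation:Hn-1-n normal} plus Serre duality) is viable in principle, but the map you extract from it is not the connecting map, and the error is fatal to the two nonzero cases. After dualizing, the connecting homomorphism becomes a map from $(N_d+1)$ copies of $S^{k-d-n-1}U$ (one for each parameterizing monomial) to $(n+1)$ copies of $S^{k-n-2}U$ (one for each partial derivative), given by the transposed Jacobian of the $N_d+1$ monomials; $\comh^{n-1}$ is the dimension of \emph{its} cokernel. Your map $\bigoplus_i x_i\cdot S^{k-d-n-1}U \to S^{k-n-1}U/T^{\vee}_{k-n-1}$ has the multiplicities on the wrong sides: at $k=d+n+1$ it is a map out of an $(n+1)$--dimensional space into a space of dimension $\binom{n+d}{n}-\eta_d$, so its cokernel is at most $\binom{n+d}{n}-\eta_d$, whereas the claimed value $\eta_d+\frac{n(d-1)}{d}\binom{n+d-1}{n}=(n+1)\binom{n+d-1}{n}-\binom{n+d}{n}+\eta_d$ exceeds it (for $n=2$, $d=5$, $\eta_d=2$: $26$ versus at most $19$). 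The correct count at $k=d+n+1$ is $\dim\mu^{-1}(T)$ for the multiplication map $\mu\colon U\otimes S^{d-1}U\to S^dU$, which is what \cite[Theorem 2]{Alzati-Re} gives and what the paper uses; similarly at $k=d+n+2$ the answer is $\dim(U\otimes T)=(n+1)\eta_d$, not a codimension inside $S^{d+1}U/T^{\vee}_{d+1}$ (note also that $\relint(I_d)$ lives in degrees divisible by $d$, so your ``$T^\vee_{d+1}$'' is not well defined as you describe it). Your own unresolved confusion about $\dim S^dU$ is a symptom of this mismatch.

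The second, independent gap is the vanishing for $k\ge d+n+3$, which you declare ``routine once set up'' but which is the actual content of the proposition. One must identify $\Comh^{n-1}(\cX_d,\cN_{\cX_d}(-k))$ with tensors $\sum_i x_i\otimes q_i$ satisfying $\sum_i x_i\partial_m(q_i)\in\relint(I_d)_1$ for \emph{every} monomial $m$ of degree $k-d-n-1$, use Lemma \ref{Lemma: lema tecnic per H1} to reduce to a single monomial $q$ and a single $x_i$, and then exhibit, for each such $q$, a divisor $m'$ with $x_i\partial_{m'}q\notin R^{\overline{\Gamma}}$. That last step is a two--case congruence argument that genuinely uses Proposition \ref{Proposition: necessary condition to be level} — the existence of three pairwise distinct exponents among $\alpha_0,\hdots,\alpha_n$, which is where the level hypothesis enters. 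Your sketch never invokes this, and without it the vanishing can fail (when the $\alpha_i$ take only two values the invariant monomials split by support and the obstruction you need to produce is not available). So both the nonzero values and the vanishing require arguments that are absent from the proposal.
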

\begin{proof}  By \cite[Theorem 2]{Alzati-Re}, we obtain $\comh^{n-1}(\cX_{d},\cN_{\cX_{d}}(-d-n-1)) = \dim(\mu^{-1}(T))$, where $\mu: U \otimes S^{d-1}U \to S^dU$ is the multiplication map, and for all $k \geq d+n+2$: 
\[\Comh^{n-1} (\cX_{d},\cN_{\cX_{d}}(-k))\!=\!(S^{k-d-n-1}U \otimes T)  \cap (\bigcap_{0 \leq i,j,r,s \leq n}(ker(D_{i,j} \circ D_{r,s})).\]
In particular, for $k = d+n+1,d+n+2$ we can conclude that
\[\comh^{n-1}(\cX_{d},\cN_{\cX_{d}}(-d-n-1)) = \eta_{d} + \frac{n(d-1)}{d} \binom{n+d-1}{n}\] and $\Comh^{n-1}(\cX_{d},\cN_{\cX_{d}}(-d-n-2)) = U \otimes T.$ Moreover, for $k \geq d+n+3$ we have
\[\begin{array}{lr}
\Comh^{n-1}(\cX_{d},\cN_{\cX_{d}}(-k)) \cong \{x_0 \otimes q_{0} + \cdots 
+ x_n \otimes q_{n} \in R_{1} \otimes R_{k-n-2} \,\mid\,\\[0.2cm]
x_0\partial_{m}(q_0)\!+ \cdots +\! x_n \partial_{m}(q_{n}) \!\in\! \relint(I_d)_1 \,\text{for all monomial}\, m \!\in\! R_{k-d-n-1}\},
\end{array}\]
where $\relint(I_d)_1$ denotes the $\kk-$vector subspace of $R_{d}$ generated by the set $\mathcal{C}_{d,1} \subset \relint(I_d)$ of  monomials of degree $d$.  
We want to prove that $\Comh^{n-1}(\cX_{d},\cN_{\cX_{d}}(-k)) = 0$ for all $k \geq d+n+3$. 
Assume that there exist $q_0,\hdots,q_n \in R_{k-n-2}$ and a monomial $m \in R_{k-d-n-1}$ such that $0 \neq u_m := x_0\partial_{m}(q_0) + \cdots + x_n \partial_{m}(q_{n}) \in \relint(I_d)_1.$ Therefore, any monomial that appears in $u_m$ belongs to $\relint(I_d) \subset \overline{\Gamma}$. Let $q \in R_{k-n-2}$ be a monomial such that $0 \neq x_i \partial_{m}q$ is a monomial that occurs in $u_m$. By Lemma \ref{Lemma: lema tecnic per H1}, we have that if 
$x_0 \otimes q_0 + \cdots + x_n \otimes q_n \in \Comh^{n-1}(\cX_{d},\cN_{\cX_{d}}(-k)),$
then for any monomial $m' \in R_{k-d-n-1}$, \, $x_{i}\partial_{m'}q \in \relint(I_d)) \subset R^{\overline{\Gamma}}_{1} = R^{\Gamma}_{d}$. We will show that there always exists a monomial $m' \in R_{k-d-n-1}$ dividing $q$ such that $x_i\partial_{m'}q \notin R^{\overline{\Gamma}}$. Thus, it concludes $\Comh^{n-1}(\cX_{d},\cN_{\cX_{d}}(-k)) = 0$ for all $k \geq d+n+3$.
Notice that, by Proposition \ref{Proposition: necessary condition to be level}, $M_{d;\alpha_0,\hdots,\alpha_n}$ has three indices $\alpha_i,\alpha_j,\alpha_l$ two by two distinct. 
We consider monomials $q = x_0^{a_{0}}\cdots x_{n}^{a_{n}} \in R_{k-n-2} \; \text{and} \; m = x_{0}^{b_{0}}\cdots x_{n}^{b_{n}} \in R_{k-d-n-1}$ such that $m$ divides $q$ and $x_i\partial_{m}q \in \relint(I_d)$. In particular, we have that $b_{j} < a_{j}$ for all $0 \leq j \neq i \leq n$ and $b_i \leq a_i-1$. By assumption, $x_i \partial_{m}q := x_{0}^{c_{0}}\cdots x_{n}^{c_{n}} = x_{0}^{a_{0}-b_{0}}\cdots x_{i}^{a_{i}-b_{i}+1} \cdots x_{n}^{a_{n}-b_{n}} \in R^{\overline{\Gamma}}_1.$
We distinguish two cases. 

\vspace{0.15cm}
\noindent \underline{Case 1:} $0 < b_{i}$. If $\alpha_i = 0$ or $\alpha_i > 0$ and $2\alpha_i-\alpha_j  \not\equiv  0 \mod d$, we define the monomial $m' = x_{0}^{b_{0}}\cdots x_{l}^{b_{j}+1} \cdots x_{i}^{b_{i}-1} \cdots x_{n}^{b_{n}}.$ Then $x_i\partial_{m'}q = x_0^{c_0}\cdots x_{j}^{c_j-1} \cdots x_{i}^{c_i+1} \cdots x_{n}^{c_n}.$ Otherwise, $2\alpha_i - \alpha_l \not\equiv 0 \mod d$ and we define $m' = x_{0}^{b_{0}}\cdots x_{l}^{b_{l}+1} \cdots x_{i}^{b_{i}-1} \cdots x_{n}^{b_{n}}.$
Then $x_i\partial_{m'}q = x_0^{c_0}\cdots x_{l}^{c_l-1} \cdots x_{i}^{c_i+1} \cdots x_{n}^{c_n}$ and its associated point does not verify the linear congruence equation $\alpha_0y_0 + \cdots + \alpha_ny_n \equiv 0 \mod d.$

\vspace{0.15cm}
\noindent \underline{Case 2:} $b_i = 0$. We take $0 < b_{h}$, and we can assume that $\alpha_h,\alpha_j$ are different pair-wise. We define $m' = x_0^{b_0}\cdots x_j^{b_j+1} \cdots x_h^{b_h-1} \cdots x_n^{b_n}.$
Then, $x_i\partial_{m'}q = x_0^{c_0}\cdots x_{j}^{c_j-1} \cdots x_{i}^{c_i} \cdots x_{h}^{c_{h}+1} \cdots x_{n}^{c_n}$ and its associated point does not verify the linear congruence equation $\alpha_0y_0 + \cdots + \alpha_ny_n \equiv 0 \mod d.$

In any case, we have constructed a monomial $m' \in R_{k-d-n-1}$ dividing $q$ such that $x_i \partial_{m'}q \notin R^{\overline{\Gamma}}_{1}$ and the proposition follows. 
\end{proof}
Directly from (\ref{Equation:Hn-1-n normal}) and Proposition \ref{Proposition: Hn-1 normal}, we obtain $\comh^{n}(\cX_{d},\cN_{\cX_{d}}(-k))$ for all $k \geq d+n+1$. We summarize all the computations and establish the main result of this section. 

\begin{theorem}\label{Theorem: dimension cohomology normal} Fix a level $GT-$variety $X_{d}$ with group a finite cyclic group $\Gamma = \langle M_{d;\alpha_0,\hdots, \alpha_n} \rangle \subset \GL(n+1,\kk)$ of order $d$ and associated $GT-$system $I_{d}$. Set $\eta_{d} := |\mathcal{C}_{d,1}|$ the number of monomials of $\relint(I_{d})$ of degree $d$ and $N_{d} := 
\binom{n+d}{d}-\eta_{d}-1$. Let $\cX_{d} \subset \PP^{N_{d}}$ be the $RL-$variety of dimension $n \geq 2$ associated to $X_{d}$. It holds: 

\noindent (i) for $0 < i < n-1$ and for all $k \in \ZZ$, \quad
$\comh^{i}(\cX_d, \cN_{\cX_{d}}(-k)) = 0$.

\vspace{0.15cm}
\noindent (ii) \[ \comh^{0}(\cX_{d},\cN_{\cX_{d}}(-k)) = \left\{\begin{array}{lcl}
(N_{d}+1)\binom{n+d-k}{n} - (n+1)\binom{n+1-k}{n} & \quad & k \leq 1 \\
(N_{d}+1)\binom{n+d-k}{n}                           & \quad & 1 < k \leq d\\
0                                                 & \quad & \text{otherwise.}
\end{array}\right.\]
\noindent (iii) \[\comh^{n-1}(\cX_{d},\cN_{\cX_{d}}(-k)) = \left\{\begin{array}{lcl}
(n+1)\binom{k-2}{n} & \quad & n+2 \leq  k < d+n+1\\
\eta_{d} + \frac{n(d-1)}{d} \binom{n+d-1}{n} & \quad  & k = d+n+1\\
(n+1)\eta_{d} & \quad & k = d+n+2\\
0                       &  \quad & k \leq n+1 \;\; \text{or} \;\;  k \geq d+n+3.
\end{array}\right.\]
\noindent (iv) \[\comh^{n}(\cX_{d},\cN_{\cX_{d}}(-k)) = \left\{\begin{array}{l@{}c@{}l}
(N_{d}+1)\binom{k-d-1}{n} - (n+1)\binom{k-2}{n} & & k \geq d+n+3\\
0 & \quad & \text{otherwise.} 
\end{array}\right.\]
\end{theorem}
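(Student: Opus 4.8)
The statement is essentially a bookkeeping summary: parts (i) and (ii) are exactly Proposition~\ref{Proposition: H0-H1 normal bundle}, so for those there is nothing to do beyond quoting it. The real content is parts (iii) and (iv), and both are extracted from the long exact sequence of cohomology associated to the presentation (\ref{Equation: Exact sequence normal}) of $\cN_{\cX_{d}}$, twisted by $\mathcal{O}_{\PP^{n}}(-k)$. The plan is to fix $k \in \ZZ$, write down the relevant piece (\ref{Equation: long sequence cohomology}) of that long exact sequence, invoke part~(i) (vanishing of $\Comh^{i}$ for $0<i<n-1$) to truncate it to the four-term exact sequence (\ref{Equation:Hn-1-n normal}), and then feed in the Bott formulas for $\Comh^{n}(\PP^{n},\mathcal{O}_{\PP^{n}}^{n+1}(1-k))$ and $\Comh^{n}(\PP^{n},\mathcal{O}_{\PP^{n}}^{N_{d}+1}(d-k))$ together with the already-proven value of $\comh^{n-1}(\cX_{d},\cN_{\cX_{d}}(-k))$.

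First I would treat the range $k < d+n+1$. Here (\ref{Equation:Hn-1-n normal}) gives, as already noted in the excerpt, $\Comh^{n}(\cX_{d},\cN_{\cX_{d}}(-k)) = 0$ and $\Comh^{n-1}(\cX_{d},\cN_{\cX_{d}}(-k)) \cong \Comh^{n}(\PP^{n},\mathcal{O}_{\PP^{n}}^{n+1}(1-k))$, and Bott gives $\comh^{n}(\PP^{n},\mathcal{O}_{\PP^{n}}(1-k)) = \binom{k-2}{n}$ (nonzero only for $k \geq n+2$); this yields the first and last lines of (iii) and the ``otherwise'' line of (iv). Next, for $k = d+n+1$ and $k = d+n+2$, parts~(iii) are already established in Proposition~\ref{Proposition: Hn-1 normal}; I would plug those values into (\ref{Equation:Hn-1-n normal}), which now reads
\[
0 \to \Comh^{n-1}(\cX_{d},\cN_{\cX_{d}}(-k)) \to \Comh^{n}(\PP^{n},\mathcal{O}_{\PP^{n}}^{n+1}(1-k)) \to \Comh^{n}(\PP^{n},\mathcal{O}_{\PP^{n}}^{N_{d}+1}(d-k)) \to \Comh^{n}(\cX_{d},\cN_{\cX_{d}}(-k)) \to 0,
\]
and use additivity of Euler characteristics (or just the exactness of this four-term sequence) to solve for $\comh^{n}(\cX_{d},\cN_{\cX_{d}}(-k))$: it equals $(N_{d}+1)\binom{k-d-1}{n} - (n+1)\binom{k-2}{n} + \comh^{n-1}(\cX_{d},\cN_{\cX_{d}}(-k))$. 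At $k = d+n+1$ the first two Bott terms vanish (since $\binom{0}{n}=0$ and $\binom{d-1}{n}$... here one checks $1-k = -d-n < -n$ so the $\mathcal O(1-k)$ term contributes, giving $(n+1)\binom{d+n-1}{n}$, while $\binom{k-d-1}{n}=\binom{n}{n}=1$) — I would carry out this arithmetic carefully, but the upshot is that these contributions are arranged to cancel and give $\comh^{n}(\cX_{d},\cN_{\cX_{d}}(-k)) = 0$ for $k = d+n+1, d+n+2$, consistent with the ``otherwise'' clause of (iv).

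Finally, for $k \geq d+n+3$, Proposition~\ref{Proposition: Hn-1 normal} gives $\comh^{n-1}(\cX_{d},\cN_{\cX_{d}}(-k)) = 0$, so the four-term sequence (\ref{Equation:Hn-1-n normal}) collapses to
\[
0 \to \Comh^{n}(\PP^{n},\mathcal{O}_{\PP^{n}}^{n+1}(1-k)) \to \Comh^{n}(\PP^{n},\mathcal{O}_{\PP^{n}}^{N_{d}+1}(d-k)) \to \Comh^{n}(\cX_{d},\cN_{\cX_{d}}(-k)) \to 0,
\]
and Bott's formulas $\comh^{n}(\PP^{n},\mathcal{O}_{\PP^{n}}(d-k)) = \binom{k-d-1}{n}$, $\comh^{n}(\PP^{n},\mathcal{O}_{\PP^{n}}(1-k)) = \binom{k-2}{n}$ give $\comh^{n}(\cX_{d},\cN_{\cX_{d}}(-k)) = (N_{d}+1)\binom{k-d-1}{n} - (n+1)\binom{k-2}{n}$, which is the first line of (iv). All the genuine geometric work has already been done in Propositions~\ref{Proposition: H0-H1 normal bundle} and~\ref{Proposition: Hn-1 normal}; the only obstacle here is organizing the case analysis on $k$ and matching the Bott-formula binomial coefficients (with their characteristic-zero sign conventions) against the piecewise formulas — in particular double-checking that the boundary cases $k = d+n+1$ and $k = d+n+2$ really do produce $\comh^{n} = 0$ rather than a leftover term. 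I would present the proof simply as: ``Parts (i), (ii) are Proposition~\ref{Proposition: H0-H1 normal bundle}. For (iii), (iv), apply the long exact sequence of cohomology to (\ref{Equation: Exact sequence normal})$\otimes\mathcal{O}_{\PP^{n}}(-k)$, use part~(i) to reduce to (\ref{Equation:Hn-1-n normal}), insert the values of $\comh^{n-1}$ from Proposition~\ref{Proposition: Hn-1 normal} and the Bott formulas, and read off the result in each range of $k$.''
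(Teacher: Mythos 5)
Your proposal is correct and follows essentially the same route as the paper: the paper itself presents this theorem as a summary, with (i) and (ii) being Proposition \ref{Proposition: H0-H1 normal bundle}, (iii) coming from the sequence (\ref{Equation:Hn-1-n normal}) together with Proposition \ref{Proposition: Hn-1 normal}, and (iv) read off from the same four-term exact sequence via the Bott formulas. Your arithmetic check at the boundary cases is sound: at $k=d+n+1$ one gets $h^{n}=\binom{n+d}{n}-\eta_{d}-(n+1)\binom{n+d-1}{n}+\eta_{d}+\frac{n(d-1)}{d}\binom{n+d-1}{n}=0$ since $\binom{n+d}{n}=\frac{n+d}{d}\binom{n+d-1}{n}$, and at $k=d+n+2$ the terms cancel similarly, confirming the ``otherwise'' clause of (iv).
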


We end this work by showing two examples pointing out Theorem \ref{Theorem: dimension cohomology normal}. All the computations have been made with the software Macaulay2 (\cite{Macaulay2}). 

\begin{example}\rm (i) We fix $d = 5$ and $\Gamma = \langle M_{5;0,1,2} \rangle \subset \GL(3,\kk)$ a cyclic group of order $5$. The ideal $I_{5} = (x_0^5,x_0^2 x_1 x_2^2,x_0 x_1^3 x_2,x_1^5,x_2^5) \subset \kk[x_0,x_1,x_2]$ is the $GT-$system generated by all monomial invariants of $\Gamma$ of degree $5$. Its associated $GT-$variety $X_{5}$ is level with $\relint(I_{5}) = (x_0^2x_1x_2^2, x_0x_1^3x_2)$ and $\eta_{5} = 2$. We present the cohomology table from degree $-10$ to $0$ of the normal bundle $\cN_{\cX_{5}}$ of the smooth rational variety $\cX_{5}$ parametrized by the inverse system $\relint(I_{5})^{-1}$. 

\[\begin{array}{lrrrrrrrrrrrrrrrrrrrrrrr}
 
   & -10 & -9  & -8 & -7 & -6 & -5 & -4 & -3 & -2 & -1 & 0 \\
2: & 150 & 82  & 30 &  . &  . &  . &  . &  . &  . &  . & . \\
1: & .   & .   & 6  & 26 & 30 & 18 & 9  & 3  &  . &  . & .  \\
0: & .   & .   & .  &  . &  . & 19 & 57 & 114&190 & 282& 390 \\

\end{array}\]

\vspace{0.3cm}
(ii) We fix $d = 4$ and $\Gamma = \langle M_{4;0,1,1,2} \rangle \subset \GL(4,\kk)$ a cyclic group of order $4$. The associated $GT-$system $I_{4} = (x_0^4,x_0^2x_3^2, x_0 x_1^2x_3, x_0 x_1 x_2x_3,x_0 x_2^2x_3,x_1^4,x_1^3 x_2,x_1^2 x_2^2,x_1 x_2^3,x_2^4,x_3^4) \subset \kk[x_0,x_1,x_2,x_3]$ and its associated $GT-$threefold $X_{4}$ is level with 
$\relint(I_{4}) = (x_0x_1x_2x_3)$ (see Example \ref{Example: level GT-varieties}(iii)). We present the cohomology table from degree $-9$ to $0$ of the normal bundle $\cN_{\cX_{4}}$ of the smooth rational variety $\cX_{4}$ parametrized by the inverse system $\relint(I_{4})^{-1}$. 

\[\begin{array}{lrrrrrrrrrrrrrrrrrrrrrr}
 
   &  -9 & -8 & -7 & -6 & -5 & -4 & -3 & -2 & -1 &  0 \\
3: & 710 &344 &116 &  . & .  &  . &  . &  . &  . &  . \\
2: & .   & .  & 4  & 46 & 40 & 16 &  4 &  . &  . &  . \\
1: & .   & .  & .  & .  & .  & .  & .  &  . &  . &  . \\
0: & .   & .  & .  &  . &  . &34  & 136&340 &676 & 1174 \\

\end{array}\]    
      
\end{example}

\vspace{1cm}


\begin{thebibliography}{ll}

\bibitem{AMRV} C. Almeida, A. V. Andrade and R.M. \ Mir\'o-Roig, {\em Gaps in the number of generators of monomial Togliatti systems}. Journal of Pure and Applied Algebra, {\bf 223} (2019), 1817--1831. 

\bibitem{Alzati-Re1} A. Alzati, R. Re and A. Tortora, {\em An algorithm for determining the normal bundle of rational monomial curves}. Rendicontin del Circolo Matematico di Palermo, series II, {\bf 67:2} (2018), 291--306.

\bibitem{Alzati-Re} A. Alzati and R. Re, {\em Cohomology of normal bundles of special rational varieties}. Communications in Algebra, {\bf 48:6} (2020), 2492--2516.

\bibitem{Bruns-Herzog} W. Bruns and J. Herzog, {\em Cohen-Macaulay rings}.
Cambridge University Press, 1993. 

\bibitem{Chardin} M. Chardin, {\em Applications of some properties of the canonical module in computational projective algebraic geometry}. Journal of Symbolic Computation, {\bf 29:4-5} (2000), 527--544.

\bibitem{CMM-R} L. Colarte, E. Mezzetti and R. M. Mir\'o-Roig, {\em On the arithmetic Cohen-Macaulayness of varieties parameterized by Togliatti systems}. Annali di Matematica Pura ed Applicata (2021) https://doi.org/10.1007/s10231-020-01058-2.

\bibitem{CMM-RS} L. Colarte, E. Mezzetti, R. M. Mir\'{o}-Roig and M. Salat,
\emph{On the coefficients of the permanent and the determinant of a circulant matrix. Applications}. Proceedings of the American Mathematical Society, {\bf 147} (2019), 547--558.

\bibitem{CMM-RS1} L. Colarte, E. Mezzetti, R. M. Mir\'o-Roig and M. Salat, {\em Togliatti systems associated to the dihedral group and the weak Lefschetz property}. Israel Journal of Mathematics, to appear.  

\bibitem{CM-R} L. Colarte and R. M. Mir\'{o}-Roig, {\em Minimal set of binomial generators for certain Veronese $3$-fold projections}. Journal of Pure and Applied Algebra, {\bf 224} (2020), 768--788.

\bibitem{Eisenbud-Van de Ven} D. Eisenbud and A. Van de Ven, {\em On the normal bundles of smooth rational space curves}. Mathematische Annalen, {\bf 256:4} (1981), 453--463.

\bibitem{EGZ} P. Erd\"os, A. Ginzburg and A. Ziv, {\em Theorem in the additive number theory}. The bulletin of the Research Council of Israel, Section F: Mathematics and Physics, {\bf 10 F1} (1961).

\bibitem{Gelfand-Kapranov-Zelevinsky} I.M. Gelfand, M.M. Kapranov and A. V. Zelevinsky, {\em Discriminants, Resultants and multidimensional Discriminants}. Springer New York, 1994.

\bibitem{Groebner} W. Gr\"{o}bner, {\em \"{U}ber Veronessche Variet\"{a}ten und deren Projektionen}. Archiv der Mathematik, {\bf 16} (1965), 257--264.

\bibitem{Hochster} M. Hochster, {\em Rings of Invariants of Tori, Cohen-Macaulay Rings Generated by Monomials, and Polytopes}. Annals of Mathematics, {\bf 96} (1972), 318-337.

\bibitem{Hochster-Eagon} M. Hochster and J. A. Eagon, {\em Cohen-Macaulay Rings, Invariant Theory, and the Generic Perfection of Determinantal Loci}. American Journal of Mathematics, {\bf 93} (1971), 1020--1058.

\bibitem{Kreuzer} M. Kreuzer, {\em On the canonical module of a $0$-dimensional scheme}. Canadian Journal of Mathematics, {\bf 46:2} (1994), 357--379.

\bibitem{Macaulay2} D. R. Grayson and M. E. Stillman, {\em Macaulay2, a software system for research in algebraic geometry}. {Available at http://www.math.uiuc.edu/Macaulay2/}

\bibitem{MM-R1} E. Mezzetti and R.M.  Mir\'o-Roig, \emph{The minimal
number of generators of a  Togliatti system}. Annali di Matematica Pura ed Applicata, {\bf 195} (2016), 2077--2098. 

\bibitem{MM-R} E. Mezzetti and R.M.  Mir\'o-Roig. \emph{Togliatti systems and Galois coverings}. Journal of Algebra, {\bf 509} (2018), 263--291.

\bibitem{MM-RO} E.  Mezzetti, R.M.  Mir\'o-Roig and G. Ottaviani,
  \emph{Laplace Equations and the Weak Lefschetz Property}. Canadian Journal of Mathematics, {\bf 65} (2013), 634--654.


\bibitem{MkM-R} M.  Micha{\l}ek and R.M. Mir\'o-Roig, {\em Smooth monomial Togliatti systems of cubics}. Journal of Combinatorial Theory, Series A, {\bf 143} (2016), 66--87. 


\bibitem{M-RS}  R.M.  Mir\'o-Roig and M. Salat, \emph{On the classification of Togliatti systems}. Communications in Algebra, {\bf 46} (2018), 2459--2475.


\bibitem{Okonek-Schneider-Spindler} C. Okonek, M. Schneider and H. Spindler, {\em Vector Bundles on Complex Projective Spaces. With an Appendix by S. I. Gelfand}. Birkhäuser, 2011. 

\bibitem{Serre} J. P. Serre, {\em Groupes alg\'ebriques et corps de classes}. Hermann, 1957. 

\bibitem{Sacchiero} G. Sacchiero, {\em Normal bundles of rational curves in projective space}. Annali dell'Universita' di Ferrara, Sezione VII, {\bf 26} (1981), 33--40.

\bibitem{Stanley} R. P. Stanley, {\em Invariants of finite groups and their application to combinatorics}.  Bulletin of the American Mathematical Society, {\bf 1} (1979), 475--511.


\bibitem{Sturmfelds} B. Sturmfels, {\em Algorithms in Invariant Theory.}
Springer, 2008. 

\bibitem{T1} E. Togliatti, {\em Alcuni esemp\^i di superficie algebriche
     degli iperspaz\^i che rappresentano un'equazione di Laplace}.
     Commentarii Mathematici Helvetici, {\bf 1} (1929), 255--272.

\bibitem{T2} E. Togliatti, {\em Alcune osservazioni sulle superficie razionali che rappresentano equazioni di Laplace}. Annali di Matematica Pura ed Applicata, {\bf  25} (1946), 325--339.

\end{thebibliography}
\end{document}